\documentclass[reqno]{amsart}
%~
\include{packages}
\bibliography{sources}
 %%%%%%%%%%%%%%%%%%%%%%%%%%%%%%%%%%%%%%%%%
 %INDEX
 %%%%%%%%%%%%%%%%%%%%%%%%%%%%%%%%%%%%%%%%%%%
%\RequirePackage{filecontents}
%\begin{filecontents*}{Idx.ist}
%heading_prefix "\\indexheading{"
%heading_suffix "}"
%
%item_x1 "\\tabto{2.5cm}"
%delim_0 "\\hfill"
%delim_1 "\\hfill"
%
%group_skip "\n\n \\indexspace\n"
%\end{filecontents*}
%
%\indexsetup{level=\subsection*,toclevel=\paragraph,headers={ \MakeUppercase{Euler sequence on strata}}{\MakeUppercase{Euler sequence on strata}}, noclearpage,firstpagestyle=headings}
%\makeindex[name=graph,title=Graphs and levels,columns=1,columnseprule,options=-s Idx.ist]
%
% Definitions of modified graphs in EXAMPLES
 % height of graphs
 % position of top left indices
 % position of top right indices

%\tikzset{
%	every loop/.style={very thick},
%	comp/.style={circle,fill,black,inner sep=0pt,minimum size=5pt},
%	order bottom left/.style={pos=.05,left,font=\tiny},
%	order top left/.style={pos=.9,left,font=\tiny},
%	order bottom right/.style={pos=.05,right,font=\tiny},
%	order top right/.style={pos=.9,right,font=\tiny},
%	order node dis/.style={text width=.75cm},
%	circled number/.style={circle, draw, inner sep=0pt, minimum size=12pt},
%	below left with distance/.style={below left,text height=10pt},
%    below right with distance/.style={below right,text height=10pt}
%	}
%

\numberwithin{equation}{section}

\DeclareMathAlphabet{\mathpzc}{OT1}{pzc}{m}{it}

\makeatletter
\setcounter{tocdepth}{1}
%\ifcsname phantomsection\endcsname
%    \newcommand*{\@gobblenexttocentry}[9]{}
%\else
%    \newcommand*{\@gobblenexttocentry}[4]{}
%\fi
%\newcommand*{\addsubsection}{%
%    \addtocontents{toc}{\protect\@gobblenexttocentry}%
%    \subsection*}
%\makeatother
%
%

\DeclareFieldFormat[article]{title}{{\it #1}}
\DeclareFieldFormat{journaltitle}{{\rm #1}}
\renewbibmacro{in:}{%
  \ifentrytype{article}{}{\printtext{\bibstring{in}\intitlepunct}}}
\DeclareFieldFormat[incollection]{title}{{\it #1}}
\DeclareFieldFormat{journaltitle}{{\rm #1}}

\begin{document}

\def\subsectionautorefname{Section}
\def\subsubsectionautorefname{Section}
\def\sectionautorefname{Section}
\def\equationautorefname~#1\null{(#1)\null}

\newcommand{\mynewtheorem}[4]{
  % #1=env name
  % #2=displayed name
  % #3=counter to share
  % #4=counter to obey
  \if\relax\detokenize{#3}\relax %#3 empty
    \if\relax\detokenize{#4}\relax %#4 empty
      \newtheorem{#1}{#2}
    \else
      \newtheorem{#1}{#2}[#4]
    \fi
  \else
    \newaliascnt{#1}{#3}
    \newtheorem{#1}[#1]{#2}
    \aliascntresetthe{#1}
  \fi
  \expandafter\def\csname #1autorefname\endcsname{#2}
}

%%%%%%%%%%%%%%%%%%%%%%%%%%%%
%%%% statements: %%%%%%%%%%%%%%%%%
%%%%%%%%%%%%%%%%%%%%%%%%%%%%
\mynewtheorem{theorem}{Theorem}{}{section}
%\mynewtheorem{thm}{Theorem}{}{section}
\mynewtheorem{lemma}{Lemma}{theorem}{}
\mynewtheorem{lem}{Lemma}{theorem}{}
\mynewtheorem{rem}{Remark}{lemma}{}
\mynewtheorem{prop}{Proposition}{lemma}{}
\mynewtheorem{cor}{Corollary}{lemma}{}
\mynewtheorem{definition}{Definition}{lemma}{}
\mynewtheorem{question}{Question}{lemma}{}
\mynewtheorem{assumption}{Assumption}{lemma}{}
\mynewtheorem{example}{Example}{lemma}{}
\mynewtheorem{exm}{Example}{lemma}{}
\mynewtheorem{rmk}{Remark}{lemma}{}
\mynewtheorem{pb}{Problem}{lemma}{}
\newtheorem*{rmk*}{Remark}
\newtheorem*{pb*}{Problem}

% Qiu's \newtheorem
\newtheorem{conjecture}[theorem]{Conjecture}
\newtheorem{condition}[theorem]{Condition}
\newtheorem{setup}[theorem]{Setup}
%%%%%%%%%%%%%%%%%%%%%%%%%%%%
%%%%%%%%%%%%%%%%%%%%%%%%%%%%

\def\defbb#1{\expandafter\def\csname b#1\endcsname{\mathbb{#1}}}
\def\defcal#1{\expandafter\def\csname c#1\endcsname{\mathcal{#1}}}
\def\deffrak#1{\expandafter\def\csname frak#1\endcsname{\mathfrak{#1}}}
\def\defop#1{\expandafter\def\csname#1\endcsname{\operatorname{#1}}}
\def\defbf#1{\expandafter\def\csname b#1\endcsname{\mathbf{#1}}}

\makeatletter
\def\defcals#1{\@defcals#1\@nil}
\def\@defcals#1{\ifx#1\@nil\else\defcal{#1}\expandafter\@defcals\fi}
\def\deffraks#1{\@deffraks#1\@nil}
\def\@deffraks#1{\ifx#1\@nil\else\deffrak{#1}\expandafter\@deffraks\fi}
\def\defbbs#1{\@defbbs#1\@nil}
\def\@defbbs#1{\ifx#1\@nil\else\defbb{#1}\expandafter\@defbbs\fi}
\def\defbfs#1{\@defbfs#1\@nil}
\def\@defbfs#1{\ifx#1\@nil\else\defbf{#1}\expandafter\@defbfs\fi}
\def\defops#1{\@defops#1,\@nil}
\def\@defops#1,#2\@nil{\if\relax#1\relax\else\defop{#1}\fi\if\relax#2\relax\else\expandafter\@defops#2\@nil\fi}
\makeatother

%%%%%%%%%%%%%%%%%%%%%%%%%%%%
%%%% Benutzerteil: %%%%%%%%%%%%%%%%%
%%%%%%%%%%%%%%%%%%%%%%%%%%%%
\defbbs{ZHQCNPALRVWS}
\defcals{ABCDOPQMNXYLTRAEHZKCIV}
\deffraks{apijklgmnopqueRCc}
\defops{IVC, PGL,SL,mod,Spec,Re,Gal,Tr,End,GL,Hom,PSL,H,div,Aut,rk,Mod,R,T,Tr,Mat,Vol,MV,Res,Hur, vol,Z,diag,Hyp,hyp,hl,ord,Im,ev,U,dev,c,CH,fin,pr,Pic,lcm,ch,td,LG,id,Sym,Aut,Log,tw,irr,discrep,BN,NF,NC,age,hor,lev,ram,NH,av,app,Quad,Stab,Per,Nil,Ker,EG,CEG,PB,Conf,MCG,Diff}
\defbfs{uvzwp} % only use this for small letters, otherwise you go directly to hell!!!
%%%%%%%%%%%%%%%%%%%%%%%%%%%%
%%%%%%%%%%%%%%%%%%%%%%%%%%%%

\def\ep{\varepsilon}
\def\ve{\varepsilon}
\def\abs#1{\lvert#1\rvert}
\def\dd{\mathrm{d}}
\def\WP{\mathrm{WP}}
\def\inj{\hookrightarrow}
\def\eq{=}

\def\i{\mathrm{i}}
\def\e{\mathrm{e}}
\def\st{\mathrm{st}}
\def\ct{\mathrm{ct}}
\def\rel{\mathrm{rel}}
\def\odd{\mathrm{odd}}
\def\even{\mathrm{even}}

\def\uC{\underline{\bC}}
\def\ol{\overline}

\def\Vrel{\bV^{\mathrm{rel}}}
\def\Wrel{\bW^{\mathrm{rel}}}
\def\twolev{\mathrm{LG_1(B)}}

%%%%%%%%%%%%%% equations
\def\be{\begin{equation}}   \def\ee{\end{equation}}     \def\bes{\begin{equation*}}    \def\ees{\end{equation*}}
\def\ba{\be\begin{aligned}} \def\ea{\end{aligned}\ee}   \def\bas{\bes\begin{aligned}}  \def\eas{\end{aligned}\ees}
\def\={\;=\;}  \def\+{\,+\,} \def\m{\,-\,}

%%%%%%%%%%%%% moduli spaces
\newcommand*{\proj}{\mathbb{P}}
\newcommand{\IVCst}[1][\mu]{{\mathcal{IVC}}({#1})}
\newcommand{\barmoduli}[1][g]{{\overline{\mathcal M}}_{#1}}
\newcommand{\moduli}[1][g]{{\mathcal M}_{#1}}
\newcommand{\omoduli}[1][g]{{\Omega\mathcal M}_{#1}}
\DeclareDocumentCommand{\qmoduli}{ O{g} O{}}{{\mathrm{Quad}^{#2}_{#1}}}

\newcommand{\modulin}[1][g,n]{{\mathcal M}_{#1}}
\newcommand{\omodulin}[1][g,n]{{\Omega\mathcal M}_{#1}}
\newcommand{\zomoduli}[1][]{{\mathcal H}_{#1}}
\newcommand{\barzomoduli}[1][]{{\overline{\mathcal H}_{#1}}}
\newcommand{\pomoduli}[1][g]{{\proj\Omega\mathcal M}_{#1}}
\newcommand{\pomodulin}[1][g,n]{{\proj\Omega\mathcal M}_{#1}}
\newcommand{\pobarmoduli}[1][g]{{\proj\Omega\overline{\mathcal M}}_{#1}}
\newcommand{\pobarmodulin}[1][g,n]{{\proj\Omega\overline{\mathcal M}}_{#1}}
\newcommand{\potmoduli}[1][g]{\proj\Omega\tilde{\mathcal{M}}_{#1}}
\newcommand{\obarmoduli}[1][g]{{\Omega\overline{\mathcal M}}_{#1}}
\newcommand{\qbarmoduli}[1][g]{{\Omega^2\overline{\mathcal M}}_{#1}}
\newcommand{\obarmodulio}[1][g]{{\Omega\overline{\mathcal M}}_{#1}^{0}}
\newcommand{\otmoduli}[1][g]{\Omega\tilde{\mathcal{M}}_{#1}}
\newcommand{\pom}[1][g]{\proj\Omega{\mathcal M}_{#1}}
\newcommand{\pobarm}[1][g]{\proj\Omega\overline{\mathcal M}_{#1}}
\newcommand{\pobarmn}[1][g,n]{\proj\Omega\overline{\mathcal M}_{#1}}
\newcommand{\princbound}{\partial\mathcal{H}}
\newcommand{\omoduliinc}[2][g,n]{{\Omega\mathcal M}_{#1}^{{\rm inc}}(#2)}
\newcommand{\obarmoduliinc}[2][g,n]{{\Omega\overline{\mathcal M}}_{#1}^{{\rm inc}}(#2)}
\newcommand{\pobarmoduliinc}[2][g,n]{{\proj\Omega\overline{\mathcal M}}_{#1}^{{\rm inc}}(#2)}
\newcommand{\otildemoduliinc}[2][g,n]{{\Omega\widetilde{\mathcal M}}_{#1}^{{\rm inc}}(#2)}
\newcommand{\potildemoduliinc}[2][g,n]{{\proj\Omega\widetilde{\mathcal M}}_{#1}^{{\rm inc}}(#2)}
\newcommand{\omoduliincp}[2][g,\lbrace n \rbrace]{{\Omega\mathcal M}_{#1}^{{\rm inc}}(#2)}
\newcommand{\obarmoduliincp}[2][g,\lbrace n \rbrace]{{\Omega\overline{\mathcal M}}_{#1}^{{\rm inc}}(#2)}
\newcommand{\obarmodulin}[1][g,n]{{\Omega\overline{\mathcal M}}_{#1}}
\newcommand{\LTH}[1][g,n]{{K \overline{\mathcal M}}_{#1}}
\newcommand{\PLS}[1][g,n]{{\bP\Xi \mathcal M}_{#1}}

\DeclareDocumentCommand{\LMS}{ O{\mu} O{g,n} O{}}{\Xi\overline{\mathcal{M}}^{#3}_{#2}(#1)}
\DeclareDocumentCommand{\Romod}{ O{\mu} O{g,n} O{}}{\Omega\mathcal{M}^{#3}_{#2}(#1)}

\newcommand*{\Tw}[1][\Lambda]{\mathrm{Tw}_{#1}}  %twist group
\newcommand*{\sTw}[1][\Lambda]{\mathrm{Tw}_{#1}^s}  %simple twist group

\newcommand{\HH}{{\mathbb H}}
\newcommand{\MM}{{\mathbb M}}
\newcommand{\bbC}{{\mathbb C}}
\newcommand{\TT}{{\mathbb T}}

% Number sets
\newcommand\PP{\mathbb P}
\renewcommand\R{\mathbb R}
\renewcommand\Z{\mathbb Z}
\newcommand\N{\mathbb N}
\newcommand\Q{\mathbb Q}
\renewcommand{\H}{\mathbb{H}}
\newcommand{\halfplane}{\mathbb{H}}
\newcommand{\chalfplane}{\overline{\mathbb{H}}}
%field
\newcommand{\bk}{k}%\mathbb{K}}

% Boldface letters
\newcommand{\bfa}{{\bf a}}
\newcommand{\bfb}{{\bf b}}
\newcommand{\bfd}{{\bf d}}
\newcommand{\bfe}{{\bf e}}
\newcommand{\bff}{{\bf f}}
\newcommand{\bfg}{{\bf g}}
\newcommand{\bfh}{{\bf h}}
\newcommand{\bfm}{{\bf m}}
\newcommand{\bfn}{{\bf n}}
\newcommand{\bfp}{{\bf p}}
\newcommand{\bfq}{{\bf q}}
\newcommand{\bft}{{\bf t}}
\newcommand{\bfP}{{\bf P}}
\newcommand{\bfR}{{\bf R}}
\newcommand{\bfU}{{\bf U}}
\newcommand{\bfu}{{\bf u}}
\newcommand{\bfx}{{\bf x}}
\newcommand{\bfz}{{\bf z}}

\newcommand{\bfl}{{\boldsymbol{\ell}}}
\newcommand{\bfmu}{{\boldsymbol{\mu}}}
\newcommand{\bfeta}{{\boldsymbol{\eta}}}
\newcommand{\bftau}{{\boldsymbol{\tau}}}
\newcommand{\bfomega}{{\boldsymbol{\omega}}}
\newcommand{\bfsigma}{{\boldsymbol{\sigma}}}
\newcommand{\bfnu}{{\boldsymbol{\nu}}}
\newcommand{\bfrho}{{\boldsymbol{\rho}}}
\newcommand{\bfone}{{\boldsymbol{1}}}

% calligraphic letters %see also categories
\newcommand\cl{\mathcal}
\newcommand{\calH}{\mathcal{H}}

%category language
\newcommand{\calA}{\mathcal A}
\newcommand{\calK}{\mathcal K}
\newcommand{\calD}{\mathcal D}
\newcommand{\calC}{\mathcal C}
\newcommand\C{\mathcal C}
\newcommand{\calT}{\mathcal T}
\newcommand{\calB}{\mathcal B}
\newcommand{\calF}{\mathcal F}
\newcommand{\calV}{\mathcal V}
\newcommand{\calQ}{\mathcal Q}
\newcommand{\calX}{\mathcal X}
\newcommand{\calY}{\mathcal Y}
\newcommand{\calP}{\mathcal P}
\newcommand{\calJ}{\mathcal J}
\newcommand{\calS}{\mathcal S}
\newcommand{\calZ}{\mathcal Z}

\newcommand\pvd{\operatorname{pvd}}
\newcommand\per{\operatorname{per}}
\newcommand\thick{\operatorname{thick}}

\newcommand\perf{\operatorname{perf}}
\newcommand{\modules}{\operatorname{mod}}
\newcommand{\Loc}{\operatorname{Loc}}
\renewcommand{\Mod}{\operatorname{Mod}}
\renewcommand{\Hom}{\operatorname{Hom}}
\newcommand{\Ext}{\operatorname{Ext}}
\newcommand{\coker}{\operatorname{coker}}

\newcommand\Rep{\operatorname{Rep}}
\newcommand\ext{\operatorname{ext}}
\newcommand{\heart}{\heartsuit}

\newcommand{\sph}{\operatorname{sph}}
\newcommand{\Br}{\operatorname{Br}}
\renewcommand{\Tw}{\operatorname{Tw}}
\newcommand{\RHom}{\operatorname{RHom}}

\newcommand{\torsion}{\mathcal{T}}
\newcommand{\torsionfree}{\mathcal{F}}
\newcommand{\tstr}{\mathcal{L}}

\newcommand\cy{\mathrm{CY}}
\newcommand\CY{\mathrm{CY}}

%quiver categories
%\newcommand\DQ{{\mathcal{D}^N_Q}}
\newcommand\DQI{{\mathcal{D}_{Q_I}}}
\newcommand\AQ{{\mathcal{A}_Q}}%{\mathcal{A}(Q,W)}
\newcommand\AQI{{\mathcal{A}_{Q_I}}}%{\mathcal{A}(Q_I,W_I)}
%\DeclareDocumentCommand{\DQ}{ O{N} }{{\mathcal{D}^{#1}_Q}}
\newcommand{\DQ}{{\mathcal{D}_{Q}}}
\DeclareDocumentCommand{\DQQ}{ O{}O{}}{{\mathcal{D}^{#1}_{#2}}}
\DeclareDocumentCommand{\AQQ}{ O{q} }{{\mathcal{A}_{#1}}}

%operator names
\newcommand{\sgn}{\operatorname{sgn}}
\renewcommand{\GL}{\operatorname{GL}}
\renewcommand{\rk}{\operatorname{rank}}
\newcommand{\opL}{\operatorname{L}}

%stability conditions
\newcommand\stab{\operatorname{Stab}}
\newcommand\gstab{\operatorname{GStab}}

\newcommand{\spn}{\operatorname{span}}
\newcommand{\GStab}{\operatorname{GStab}}
\newcommand{\PGStab}{\bP\mathrm{GStab}}
\newcommand{\MStab}{\operatorname{MStab}}
\newcommand{\PMStab}{\bP\mathrm{MStab}}
\newcommand{\Tilt}{\operatorname{Tilt}}

%qiu
\def\oQ{\overline{Q}}
\def\bY{\mathbf{Y}}
\def\bX{\mathbf{X}}
\newcommand{\fmu}{\mu^\sharp}
\newcommand{\bmu}{\mu^\flat}
\newcommand{\Cone}{\operatorname{Cone}}
\newcommand\add{\operatorname{Add}}
\newcommand\Irr{\operatorname{Irr}}

\newcommand{\EGs}{\EG^{s}} % oriented exchange graph of finite hearts
\newcommand{\SEG}{\operatorname{SEG}} % oriented exchange graph of silting
\newcommand{\pSEG}{\operatorname{pSEG}} % oriented exchange graph of partial silting
\newcommand{\EGp}{\EG^\circ}       % principal component of EG
\newcommand{\SEGp}{\SEG^\circ}       % principal component of SEG
\newcommand{\EGb}{\EG^\bullet}       % principal part of EG
\newcommand{\SEGb}{\SEG^\bullet}       % principal part of SEG
\newcommand{\SEGV}{\SEG_{{^\perp}\calV}} % oriented exchange graph of silting
\newcommand{\pSEGV}{\pSEG_{{^\perp}\calV}}
\newcommand{\SEGVb}{\SEGb_{{^\perp}\calV}} % oriented exchange graph of silting, V-perp
\newcommand{\pSEGVb}{\pSEG^\bullet_{{^\perp}\calV}}

\newcommand{\EGT}{\EG^\circ}
\newcommand{\uEG}{\underline{\EG}} % unoriented exchange graph of hearts
\newcommand{\uCEG}{\underline{\CEG}} % unoriented cluster exchange graph

\newcommand{\CA}{\operatorname{CA}}
\newcommand{\OA}{\operatorname{OA}}
\newcommand{\wOA}{\widetilde{\OA}}
\newcommand{\wCA}{\widetilde{\CA}}
\newcommand{\BT}{\operatorname{BT}}
\newcommand{\SBr}{\operatorname{SBr}}
\newcommand\Bt[1]{\operatorname{B}_{#1}}
\newcommand\bt[1]{\operatorname{B}_{#1}^{-1}}

\newcommand{\Sim}{\operatorname{Sim}}
\def\dual{\iota}
\def\ivc{\iota_v}
\newcommand\ind{\operatorname{index}}
\newcommand{\Qgrad}{\overline{Q}}
\newcommand\diff{\operatorname{d}}
\def\Dsow{\Dcol}
\def\pvc{e \Gamma e}
\def\Qvc{Q_V^c}
\def\Gwt{\FGamma_\wt}
\def\wX{\widetilde{X}}

\def\DD{\mathbf{D}}
\def\uD{\underline{\calD}}
\def\uh{\underline{\calH}}
\def\uZ{\underline{Z}}
\def\us{\underline{\sigma}}
\newcommand{\isom}{\cong}

\newcommand{\tilt}[3]{{#1}^{#2}_{#3}}
\newcommand\Stap{\Stab^\circ} % principal component of Stab
\newcommand\Stas{\Stab^\bullet}
\newcommand{\cub}{\operatorname{U}} % cell
\newcommand{\skel}{\wp} % skeleton embedding

\newcommand{\mai}{\mathbf{i}} % imaginary number i

\newcommand\wT{\widetilde{\TT}} % graded decorated triangulation
\newcommand{\Tri}{\Delta}
\newcommand{\deco}{\Delta}%set of decorations

\def\w{\mathbf{w}}
\def\wtpt{\w_{+2}}
\newcommand\surf{\mathbf{S}}  % marked surface
\newcommand\surfo{{\mathbf{S}}_\Tri}  % decorated marked surface
\newcommand\surfw{\surf^\w}  % marked surface+w
\newcommand\sow{\surf_\w}  % decorated marked surface+w
\newcommand\subsur{\Sigma}  % sub decorated marked surface+w
\newcommand\colsur{\overline{\surf}_\w}  % collapsed decorated marked surface+w
%corresponding cats:
\def\Dsan{\calD_3(\surfo)}
\def\Dsow{\calD(\sow)}
\def\Dsub{\calD_3(\subsur)}
\def\Dcol{\calD(\colsur)}

\def\ww{node[white]{$\bullet$}node[red]{$\circ$}}
\def\nn{node{$\bullet$}}

\newcommand{\uk}{\mathbf{k}}
\def\sing{\operatorname{Sing}}

\newcommand\jiantou{edge[->]}
\newcommand\AS{\mathbb{A}}

\def\grad{\lambda}
\def\gms{\surf^\grad}
\def\gmsw{\surf^{\grad,\wt}_{\Tri}}
\def\iT{\TT_0}

\def\weta{\widetilde{\eta}}
\def\wzeta{\widetilde{\zeta}}
\def\walpha{\widetilde{\alpha}}
\def\wbeta{\widetilde{\beta}}
\def\wgamma{\widetilde{\gamma}}

\def\dsan{\calD_3(\surfo)}
\def\psan{\per(\surfo)}
\def\dsow{\Dsow}
\def\psow{\per(\colsur)}

\def\RP{\operatorname{RP}}
\def\Rs{\operatorname{RS}}
\newcommand{\ST}{\operatorname{ST}}  % spherical twist group
\newcommand{\STp}{\ST^\circ}  % effectively acting spherical twist group

\newcommand{\Int}{\operatorname{Int}}

\def\hori{_{\operatorname{H}}}

\newcommand{\Note}[1]{\textcolor{red}{#1}}
\newcommand{\note}[1]{\textcolor{Emerald}{#1}}
\newcommand{\qy}[1]{\textcolor{cyan}{#1}}

\newcommand\foli[5]{% n, a, b, c
	\foreach \m in {0,#1,...,#5}{
		\coordinate (#2#4) at ($(#2)!.5!(#4)$);
		\draw[Emerald!50]plot [smooth,tension=.5] coordinates
		{(#2) ($(#2#4)!\m!(#3)$) (#4)};
		\draw[thick,gray](#2)to(#3)to(#4);
}}

\def\iA{\AS_0}
\def\iA{\AS_0}

\newcommand\Ind{\operatorname{Ind}}

\newcommand\xx{\mathbf{X}} %compact version of S
\newcommand\surp{\xx^\circ}
\newcommand{\Zer}{\operatorname{Zero}}
\newcommand{\Pol}{\operatorname{Pol}}
\newcommand{\Crit}{\operatorname{Crit}}
\newcommand{\FQuad}{\operatorname{FQuad}}
\newcommand{\FQuab}{\FQuad^{\bullet}}

\newcommand{\Imgy}{\operatorname{Im}}
\newcommand{\numarc}{n}

% jeonghoon's other

\newcommand{\Exch}{\operatorname{Exch}}
\newcommand{\arrowIn}{
	\tikz \draw[-stealth] (-1pt,0) -- (1pt,0);
}

% martin's other

\newcommand{\wh}{\widehat}
\newcommand{\wt}{\widetilde}

\newcommand{\whmu}{\widehat{\mu}}
\newcommand{\whrho}{\widehat{\rho}}
\newcommand{\whLa}{\widehat{\Lambda}}

\newcommand{\ps}{\mathrm{ps}}

\newcommand{\tdpm}[1][{\Gamma}]{\mathfrak{W}_{\operatorname{pm}}(#1)}
\newcommand{\tdps}[1][{\Gamma}]{\mathfrak{W}_{\operatorname{ps}}(#1)}
\newcommand{\cal}[1]{\mathcal{#1}}

\newlength{\halfbls}\setlength{\halfbls}{.8\baselineskip}

\newcommand*{\Teichmuller}{Teich\-m\"uller\xspace}

\DeclareDocumentCommand{\MSfun}{ O{\mu} }{\mathbf{MS}({#1})}
%% THIS ONE HAS NEITHER GAMMA NOR LAMBDA!!!
\DeclareDocumentCommand{\MSgrp}{ O{\mu} }{\mathcal{MS}({#1})}
\DeclareDocumentCommand{\MScoarse}{ O{\mu} }{\mathrm{MS}({#1})}
\DeclareDocumentCommand{\tMScoarse}{ O{\mu} }{\widetilde{\mathbb{P}\mathrm{MS}}({#1})}

\newcommand{\kmin}{\kappa_{(2g-2)}}
\newcommand{\ktop}{\kappa_{\mu_\Gamma^{\top}}}
\newcommand{\kbot}{\kappa_{\mu_\Gamma^{\bot}}}

%anna's other
\newcommand\bra{\langle}
\newcommand\ket{\rangle}
\newcommand{\del}{\partial}
\newcommand{\deld}[1]{\frac{\del}{\del {#1} }}
\newcommand{\frap}{\frac{1}{2\pi\ii}}
\renewcommand{\Re}{\operatorname{Re}}
\renewcommand{\Im}{\operatorname{Im}}
\newcommand{\Rp}{\mathbb{R}_{>0}}
\newcommand{\Rm}{\R_{<0}}

\def\lift{\mathrm{lift}}
\def\ul{\underline}
\newcommand\<{\langle}
\renewcommand\>{\rangle}
\def\h{\calH}
\def\D{\calD}
\def\aut{\mathpzc{Aut}}
\tikzcdset{arrow style=tikz, diagrams={>=stealth}}
\usetikzlibrary{arrows}

\def\acf{\mathbf{k}}
\def\PTS{\mathbb{P}T\surf}
\newcommand\coho[1]{\operatorname{H}^{#1}}
\def\MTS{\mathbb{R}T\surf^{\lambda}}

\def\ora{\overrightarrow}
\def\harc{\ora{\gamma_h}}
\def\varc{\ora{\gamma_v}}
\def\cube{\mathrm{U}}

\newcommand{\on}[1]{\operatorname{#1}}
\newcommand{\iv}[1]{(#1)^{-1}}

%%%%%%%%%%%%%%%%%%%%%%%%%

\title[Quadratic differentials as stability conditions]
      {Quadratic differentials as stability conditions:
        collapsing subsurfaces }

\author{Anna Barbieri}
\address{A.B.:  Dipartimento di Informatica - Settore Matematica,
Universit\`a di Verona,
Strada Le Grazie 15, 37134 Verona - Italy }
\email{anna.barbieri@univr.it}
\thanks{Research of A.B. was supported by the ERC Consolidator Grant ERC-2017-CoG-771507, StabCondEn, "Stability Conditions, Moduli Spaces and Enhancements"}

\author{Martin M\"oller}
\address{M.M.: Institut f\"ur Mathematik, Goethe-Universit\"at Frankfurt,
Robert-Mayer-Str. 6-8,
60325 Frankfurt am Main, Germany}
\email{moeller@math.uni-frankfurt.de}
\thanks{Research of M.M. and J.S.\ is supported
by the DFG-project MO 1884/2-1, by the LOEWE-Schwerpunkt
``Uniformisierte Strukturen in Arithmetik und Geometrie''
and the Collaborative Research Centre
TRR 326 ``Geometry and Arithmetic of Uniformized Structures''.}

\author{Yu Qiu}
\address{Y.Q.:
	Yau Mathematical Sciences Center and Department of Mathematical Sciences,
	Tsinghua University,
    100084 Beijing,
    China.
    \&
    Beijing Institute of Mathematical Sciences and Applications, Yanqi Lake, Beijing, China}
\email{yu.qiu@bath.edu}
\thanks{Research of Q.Y. is supported by
National Key R\&D Program of China (No. 2020YFA0713000),
Beijing Natural Science Foundation (Grant No.Z180003) and
National Natural Science Foundation of China (Grant No.12031007).
}

\author{Jeonghoon So}
\address{J.S.: Institut f\"ur Mathematik, Goethe-Universit\"at Frankfurt,
Robert-Mayer-Str. 6-8,
60325 Frankfurt am Main, Germany}
\email{so@math.uni-frankfurt.de}

\dedicatory{Dedicated to Bernhard Keller on the occasion of his sixtieth birthday}

\begin{abstract}

We introduce a new class of triangulated categories, which are Verdier quotients
of 3-Calabi-Yau categories from (decorated) marked surfaces, and show that its
spaces of stability conditions can be identified with moduli spaces of framed
quadratic differentials on Riemann surfaces with arbitrary order zeros and arbitrary
higher order poles.
\par
A main tool in our proof is a comparison of two exchange graphs, obtained by
tilting hearts in the quotient categories and by flipping mixed-angulations
associated with the quadratic differentials.
\end{abstract}
\maketitle
\tableofcontents

\section{Introduction}

The notion of stability conditions on a triangulated category $\D$ was introduced
by Bridgeland  in \cite{BrStab}. Since then, the stability
space~$\Stab\D$, which as a set consists of Bridgeland stability conditions on~$\D$,
has played a major role in algebraic geometry,  representation theory, mirror
symmetry and some branches of mathematical physics, providing interesting
synergies. By its very definition~$\Stab\D$ comes with a $\wt{\GL}^+_2(\bR)$-action,
just as moduli spaces of framed abelian and quadratic differentials do.
\par
While the global structure of~$\Stab\D$ as a complex manifold is still unknown
in many cases, there are examples that are quite well understood. This includes for
instance  the case of the stability space of a class of three-Calabi-Yau ($\CY_3$)
categories constructed from the Ginzburg algebra of quivers with potentials, that
are well known categories in representation and cluster
theory. Inspired by the work  of Gaiotto-Moore-Neitzke \cite{GMN}, Bridgeland and
Smith have shown in \cite{BS15} that some moduli spaces of meromorphic
quadratic differentials with simple zeros can be identified with those spaces
of stability conditions, appropriately quotiented by the action of autoequivalences.
\par\smallskip
Our goal here is to generalize the Bridgeland-Smith correspondence to
quadratic differentials with arbitrary higher order zeros. It implies studying
another class of categories, which are related to the previous ones as
quotients, but seem less well-behaved. Our motivation for this is two-fold.
\par\smallskip
\textbf{Categorification.} Spaces of quadratic differentials
with higher order zeros arise when zeros collide. As such they form a subspace
of the total space of quadratic differentials with no zero order condition,
in fact a subspace locally cut out by linear conditions in period coordinates.
In the spaces of abelian and quadratic differentials, the $\bR$-linear submanifolds
have received a lot of attention (see e.g.\ \cite{FiSurvey} for a recent survey
on the classification problem, however with focus on holomorphic differentials).
Since these submanifolds admit an action of the universal cover of $\GL_2^+(\bR)$, a natural question is
whether they all can be interpreted as spaces of stability conditions on an
appropriate triangulated category.
\par
More generally one can analyze the collision of zeros and poles, or even the
collapse of a higher genus subsurface. Our main result gives an answer to
the question how to interpret such collapses categorically. In a nutshell,
collapses correspond to taking Verdier quotients.
\par\smallskip
\textbf{Compactification.} Spaces of stability conditions are
typically non-compact, even after projectivization,  and several strategies of
compactification have recently
been explored. Some of them are Thurston-type compactifications with real
codimension one boundary (\cite{BDL_Thurston}, \cite{KKO}), some of them are partial
compactifications (\cite{barbara}, \cite{BPPW}). On the other hand, spaces of
projectivized quadratic differentials have a compactification as smooth complex
orbifolds (combine \cite{LMS} and \cite{kdiff}) and in forthcoming work we will
recast this compactification in terms of 'multi-scale stability conditions' for quiver
$\CY_3$ categories.
Spaces of quadratic differentials with higher order zeros appear naturally as
boundary strata in this compactification.

\subsection{The main result}
The combinatorics of a meromorphic quadratic differential~$q$ on a Riemann
surface $S$ is encoded in a weighted decorated marked
surface~$\surf_\mathbf{w}$, the real blow-up of $S$  at the poles of~$q$, see
Section~\ref{app:DMSqdiff} for the definition. The pole orders are encoded
in the markings of (a finite number of points in) the boundary components
of~$\surf_\mathbf{w}$, and usually hidden from notation. The weights~$\w$ encode the
tuple of orders of zeros of the differential. The horizontal trajectories
of a generic~$q$ induce a tiling of $\surf_\mathbf{w}$ into polygons, whose number of
edges depends on the orders of the zeros they contain.
\par
The case of $\surfo:=\surf_{\mathbf{w}\equiv \mathbf{1}}$ is the one originally
considered by \cite{BS15} and \cite{KQ2}. It corresponds to quadratic
differentials  with simple zeros. These differentials  induce a triangulation
of $\surfo$ to which, in turn, one associates a quiver with potential $(Q,W)$ and
its Ginzburg (differential graded) algebra $\Gamma(Q,W)$. The triangulated
category~$\D_3(\surfo)$ is defined as the perfectly valued derived category
$\pvd(\Gamma(Q,W))$, that is the subcategory of $\calD(\Gamma)$ of $\Gamma$-modules
with finite dimensional total homology. The correspondence of \cite{BS15,KQ2} can
be restated as an isomorphism of complex manifolds
\[
K: \FQuad^\circ(\surfo)   \to \Stab^\circ(\D_3(\surfo)),
\]
involving the moduli space of (Teichm\"{u}ller-)framed quadratic differentials
on $\surfo$ and a connected component  $\Stab^\circ(\cD)$ of the stability
manifold of $\D_3(\surfo)$.
\par
Section~\ref{sec:prelim} contains background material on Bridgeland stability
conditions and quotient categories, as well as how to associate
Ginzburg categories to quivers with potential. We summarize the notions of
marked surfaces, weighted decorations and quadratic differentials in
Section~\ref{sec:dmsQD}.
The geometry of moduli spaces with all kinds of framings is recalled
in Section~\ref{sec:geoqdiff}.
The previous results by \cite{BS15,KQ2} together with mapping class group actions
are restated in Theorem~\ref{thm:BS15_iso}.
\par
Consider now quadratic differentials with signature~$\mathbf{w}$ different from
the 'trivial' case $\w \equiv \mathbf{1}$ and their associated $\surf_\mathbf{w}$.
A weighted decorated marked surface with non-trivial weight can be obtained by
collapsing a subsurface $\subsur$ in $\surfo$, as we explain in
Section~\ref{sec_subsurface}. In such a case we denote it by $\colsur$.
\emph{In the whole paper the surface~$\colsur$ has at least one boundary component
(i.e.\ the quadratic differentials are meromorphic), there are no
punctures (i.e.\ none of the marked points is a regular point of the
quadratic differentials) and we disallow double poles and simple poles}
to avoid several technicalities like working with cohomology valued in local
systems (the space $\Quad^\heart$ of \cite{BS15}) and self-folded triangles in
triangulations.
\par
The main result of this paper is Theorem \ref{thm:KrhoBihol}, stated in short
form as follows:
\par
\begin{theorem} \label{intro:KrhoBihol}
There is an isomorphism of complex manifolds
\[
K: \FQuad^\bullet(\colsur)   \to \Stab^\bullet(\D(\colsur))
\]
between the principal part of the space of Teichm\"uller-framed quadratic differentials
inducing the weighted decorated marked surface~$\colsur$ and the principal part of the space of
stability conditions on the Verdier quotient
\[\D(\colsur) := \Dsan/\Dsub.\]
\end{theorem}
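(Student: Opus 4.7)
The plan is to adapt the strategy behind the $\w\equiv\mathbf{1}$ case, recorded in Theorem~\ref{thm:BS15_iso}, to the quotient setting by combining a geometric description via mixed-angulations and flips with a categorical description via hearts and tilts in the Verdier quotient $\D(\colsur)=\Dsan/\Dsub$. First I would construct $K$ locally. Given a framed quadratic differential $(q,\rho)\in\FQuab(\colsur)$ whose horizontal foliation induces a mixed-angulation $\eta$ of $\colsur$, the periods of $\sqrt{q}$ along the saddle arcs of $\eta$ provide the values of a central charge $Z_\eta$. The companion heart $\h_\eta\subset\D(\colsur)$ is produced by first refining $\eta$ to an ideal triangulation $\Tri$ of the finer surface $\surfo$ (possible because $\colsur$ arises from collapsing the subsurface $\subsur\subset\surfo$), taking the standard Ginzburg heart $\h_\Tri\subset\Dsan$ from Section~\ref{sec_quiver_cate}, and descending it along the Verdier quotient; by the material of Section~\ref{sec:prelim}, this descent lands in $\Stas(\D(\colsur))$ exactly because the simples of $\h_\Tri$ supported in $\subsur$ lie in the thick subcategory $\Dsub$ by construction of the collapse.

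Next I would show that the local definitions glue. Crossing a wall in $\FQuab(\colsur)$ amounts to flipping an arc of the mixed-angulation, while crossing a wall in $\Stas(\D(\colsur))$ amounts to a simple tilt of the heart. The key combinatorial step, announced in the abstract, is to construct an isomorphism of principal exchange graphs
\[
\phi\colon \EGb(\D(\colsur),\h_\eta)\;\longrightarrow\;\Exch^\bullet(\colsur,\eta),
\]
identifying tilts of hearts reachable from $\h_\eta$ with flips of mixed-angulations reachable from $\eta$. Both sides can be analyzed through the quotient functor from the corresponding objects over $\surfo$, but the action of the Dehn twists supported in $\subsur$ on the geometric side must be carefully matched with the action of the spherical twists along simples of $\Dsub$ on the categorical side, and the resulting groupoids quotiented out symmetrically.

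Once $\phi$ is in place, the local definitions of $K$ extend unambiguously to all of $\FQuab(\colsur)$ and produce a local biholomorphism: on each open cell $U_\eta$, both sides are modelled by the same open half-space of admissible central charges, and $K$ becomes the identity in period coordinates. Injectivity of $K$ then follows from $\phi$, since two framed differentials landing in the same stability condition would have to share the same cell and hence the same mixed-angulation up to framing. For surjectivity onto $\Stas(\D(\colsur))$ I would verify that every heart in the principal component is reachable from $\h_\eta$ by a finite sequence of simple tilts, transport this path to a sequence of flips via $\phi^{-1}$, and realize the endpoint by a framed differential using a density argument in the style of \cite{BS15}.

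The main obstacle will be Step~2, the comparison of exchange graphs, because the quotient $\D(\colsur)$ behaves worse than the parent $\CY_3$ category: not every simple of $\h_\eta$ admits both a left and a right simple tilt, reflecting the fact that polygons in mixed-angulations with many sides impose more intricate flip combinatorics than the triangles of the unweighted case. Controlling this---in particular ruling out ``phantom'' hearts in the quotient that do not descend from any mixed-angulation, and conversely showing that every admissible flip corresponds to an honest tilt in $\D(\colsur)$---is the technical core that makes the principal ($\bullet$) restriction indispensable on both sides.
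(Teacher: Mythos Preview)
Your outline matches the paper's strategy on the tame locus: define $K$ chamber-by-chamber via mixed-angulations and periods, and glue across codimension-one walls using the exchange graph isomorphism $\EGb(\colsur)\cong\EGb(\D(\colsur))$ (Theorem~\ref{thm:EGiso}). That part is essentially correct.

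The genuine gap is your passage ``the local definitions of $K$ extend unambiguously to all of $\FQuab(\colsur)$''. The chambers and their shared walls only cover the tame locus $B_2$, i.e.\ differentials with at most one horizontal saddle trajectory. Differentials in $F_p$ for $p>2$ (multiple saddle trajectories, ring domains, spiral domains) are not in any chamber closure and are not reached by the exchange-graph argument alone. The paper handles this with a separate, nontrivial geometric step: the ``walls have ends'' result (Proposition~\ref{prop:wallhaveends} and Corollary~\ref{cor:pathB2}), which shows that any path in $\FQuad(\colsur)$ can be homotoped rel endpoints into $B_2$. This, combined with the $\bC$-action, lets one extend $K$ from $B_{p-1}$ to $B_p$ inductively by checking that the two one-sided limits $\sigma_\pm(q)=\lim_{t\to 0^\pm}K(e^{it}q)$ agree, a locally constant condition that holds at a wall-end. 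Your ``density argument in the style of \cite{BS15}'' is not the right tool here and is invoked at the wrong place (you use it for surjectivity, where in fact surjectivity is immediate from the definition~\eqref{eq:defStabbullet} of $\Stab^\bullet$ together with the $\bC$-action).

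A smaller point: your stated obstacle, that ``not every simple of $\h_\eta$ admits both a left and a right simple tilt'', is not the actual difficulty. The paper shows both $\EGb(\colsur)$ and $\EGb(\D(\colsur))$ are $(m,m)$-regular, so all simple tilts exist. The real issue in the exchange-graph comparison is whether a simple tilt of a quotient-type heart is again of quotient type, i.e.\ whether it lifts to a $\calV$-compatible heart upstairs. This is exactly what Proposition~\ref{prop:ind.tilting} and Proposition~\ref{prop:refine_of_flip} establish, via the $\Ext^1$-vanishing / no-arrow condition obtained by choosing a suitable refinement of the mixed-angulation.
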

In this theorem the bullet points ('principal part') refer to a union of connected
components, defined in Sections~\ref{sec:refprinc} and~\ref{sec:EGquotheart}
respectively, and motivated below. Our results can most likely be extended to
include punctures and small order poles with appropriate care. The case of
holomorphic differentials is a whole different story, for which the recent
categorification by Haiden (\cite{H}) could be the point of departure.
\par
A given decorated marked surface~$\colsur$ may be realized as
the collapse of several different surfaces~$\surfo$ with simple weights: the
case $g(\surfo) = g(\colsur)$ is always possible, $g(\surfo) >
g(\colsur)$ is possible
if the entries of~$\w$ are large enough. Since the spaces of framed quadratic
differentials do not depend on the collapse, Theorem \ref{intro:KrhoBihol} gives the
realization of the same manifold~$M$ as $M \cong \Stab^\bullet(\D(\colsur))$
for different triangulated categories~$\D(\colsur)$. However the autoequivalences
of~$\D(\colsur)$ detect those different realizations, just as the mapping
class groups do on the topological side, see Section~\ref{sec:symmetrygroups}.
On the side of framed differentials, every component of $\FQuad(\colsur)$
is realized as a component of $\FQuad^\bullet(\colsur)$ for appropriate
choices of initial triangulations. For spaces of stability conditions however
we make no claims on the (non)-existence of spurious components of
$\Stab(\D(\colsur))$ not covered as target of our correspondence, just as
this question is left undecided in~\cite{BS15} for simple zeros.

\subsection{Techniques}\label{subsec_techniques} The proof of
Theorem \ref{intro:KrhoBihol}, given in Section \ref{sec_iso}, shares
with the original proof by Bridgeland and Smith the idea of extending a
chamber-wise identification. The main differences are an explicit isomorphism of
exchange graphs on both sides and a generalization of the method to extend beyond
the 'tame locus', as we now explain.
\par
Both $\FQuad(\colsur)$ and $\Stab(\D(\colsur))$
come with a natural chamber structure. In $\FQuad(\colsur)$ the open chambers
are given by quadratic differentials without horizontal saddle connections. The
trajectory structure of the differential gives rise to an arc system that we
call \emph{$\w$-mixed-angulation}, generalizing the triangulations in the simple
zero case. The preimage of the mixed-angulation under the collapse is called
a \emph{partial triangulation} of~$\surfo$. Adjacency of chambers is encoded
by a notion of forward flip of the partial triangulation and leads to the
definition of an exchange graph $\EG(\colsur)$. On the other side,
$\Stab(\D(\colsur))$ is also tiled in chambers identified by the heart of
a bounded t-structure the stability conditions are supported on. The first
step consists of studying and comparing these chamber structures.
\par
\paragraph*{\textbf{Comparison of exchange graphs}} We start from a distinguished
heart of $\calD(\colsur)$ and need to consider the exchange graph $\EG(\D(\colsur))$
whose vertices are hearts of bounded t-structures and whose arrows are simple tilts
(recalled in Section \ref{sec:prelim}). The idea is to relate
(parts of) $\EG(\colsur)$ and $\EG(\D(\colsur))$. When $\mathbf{w} \equiv \mathbf{1}$
this is the relation
between triangulations and finite hearts of bounded t-structures of a $\CY_3$ Ginzburg
category.
\par
Recall that $\D(\colsur)$ is by definition a Verdier quotient of a $\CY_3$
category $\calD_3(\surfo)$.
While a partial triangulation can always be refined
to a triangulation, a general expectation is that \emph{not} all hearts in $\D(\colsur)$ arise as quotients
of hearts in~$\Dsan$. When they do, we call them hearts \emph{of quotient type}. We restrict to the principal part $\EGb(\colsur)$ of $\EG(\colsur)$
whose vertices are those partial triangulations that can be refined to a
successive flip of a triangulation~$\TT$ fixed once and for all.
Correspondingly, $\EGb(\D(\colsur))$ includes precisely hearts that are quotients
of tilts of the heart of $\calD_3(\surfo)$ associated to~$\TT$ under the
original ($\mathbf{w}\equiv\mathbf{1}$) correspondence.  The definition and study of
these graphs covers Sections~\ref{sec_subsurface} and~\ref{sec:cat sub/col} and
leads to the isomorphism
\be\label{intro_EG_iso} \EGb(\colsur) \cong \EGb(\D(\colsur)),
\ee
stated as Theorem~\ref{thm:EGiso}. It allows us to define the map~$K$ of
Theorem \ref{intro:KrhoBihol} on the
complement~$B_2$ of the locus of differentials with more than one horizontal saddle connection.
\par
The viewpoint of refining partial triangulations to triangulations makes
it also clear why quotient categories naturally arise in this context: Any
two triangulation refinements of a partial triangulation differ by
successive flips in the additional edges
and we show in Proposition~\ref{equal_quot_heart} that the resulting quotient
heart is independent of these choices. A consequence of the main result and \eqref{intro_EG_iso} is that the union of $\bC$-orbits of hearts of
quotient  type of $\calD(\colsur)$ form
connected components of $\Stab(\D(\colsur))$.
\par
\paragraph*{\textbf{Walls have ends}} Finally we need extend~$K|_{B_2}$
to all of $\FQuad^\bullet(\colsur)$, which is stratified  by the number of closed
saddle connections and
recurrent trajectories. We generalize in Section~\ref{sec:wallsends}
the argument in \cite{BS15} that each component of all the higher order strata
$B_p$  of $\FQuad(\colsur)$ has 'ends' where it locally does not disconnect the
complement.
Our argument gives an alternative proof that does not depend on case distinctions
of local configurations of hat-homologous saddle connections. Those configurations
probably become hard to list as the orders of zeros in~$\w$ grow. As a downside,
our approach avoids classifying the moduli spaces of objects in $\D(\colsur)$ that
are stable and of phase zero in a given stability condition~$\sigma$,
compare \cite[Theorems~1.4 and~11.6]{BS15}. Due to the connection with computing
BPS-invariants in the $\CY_3$ context, it seems interesting to analyze this further.

\subsection{The category $\calD(\colsur)$}
The category $\calD(\colsur)$ is defined as the quotient of a $\CY_3$ triangulated
category $\calD(\surfo):=\pvd(\Gamma(Q,W))$ by a subcategory of the same
form $\Dsub:=\pvd(\Gamma(Q_I,W_I))$, where $(Q_I,W_I)$ is a subquiver
of the quiver with potential $(Q,W)$ defined by the combinatorial data of a quadratic
differential. As opposed to~$\calD(\surfo)$, the quotient category  is in general
not Calabi-Yau and not $\Hom$-finite, yet we need to consider its bounded t-structures.
Proposition~\ref{prop:ind.tilting} and Theorem~\ref{thm:EGiso} in
Section~\ref{sec:cat sub/col}, beyond proving the isomorphism of
the graphs~\eqref{intro_EG_iso}, tell us about the possibility to lift a simple
tilt in the quotient $\pvd(\Gamma(Q,W))/\pvd(\Gamma(Q_I,W_I))$ to a simple tilt
on $\pvd(\Gamma(Q,W))$ and viceversa. A comprehensive description of these categories and their t-structures
%This type of results holds for other differentially graded Ginzburg algebra $\Gamma_N(Q,W)$ of Calabi-Yau dimension $N\geq 3$ associated with a quiver with potential. This generalization
will appear in a subsequent paper.
\par

%We provide in Section~\ref{sec:HR+KD} a complementary viewpoint on the
%category~$\calD(\colsur)$ and the exchange graph isomorphism \eqref{intro_EG_iso},
%logically independent from the proof of Theorem~\ref{intro:KrhoBihol}. We
%interpret $\D(\colsur)$ as the perfectly valued derived category $\pvd(e\Gamma e)$
%of the algebra~$e\Gamma e$, where $\Gamma=\Gamma(Q,W)$ and $e$ is an idempotent
%element.
%We show in Theorem~\ref{thm:SPdual} that there is a correspondence between
%simple objects in $\calD(\colsur)=\pvd(e\Gamma e)$ and projective objects in the
%perfect derived category~$\per(e\Gamma e)$. More precisely there is an isomorphism
%of graphs $\EGb(\pvd(e\Gamma e)) \simeq \SEGb(\per(e\Gamma e))$, where $\SEG$ denotes
%the silting exchange graph. Again the
%bullet refers to the principal part, consisting of partial silting objects that can
%be completed to silting objects in a fixed component of $\SEG(\per(\Gamma))$.
%This result generalizes previously known results for~$\Gamma$ itself.
\par
\subsection{Exchange graphs and connected components, examples}
The classification of connected components of spaces of abelian or quadratic
differentials has attracted a lot of attention, and similarly the question
whether spaces of stability conditions are connected is an important
question in the topic. We give a short overview over the literature. For
differentials, there are two classification questions. For (plain, unframed)
differentials, the first result is by Kontsevich-Zorich (\cite{kozo1}) for
holomorphic abelian differentials, followed by Lanneau (\cite{lanneau}) for
holomorphic quadratic differentials. Boissy first classified components for
meromorphic abelian differentials (\cite{boissymero}). See work of
Chen-Gendron \cite{CGkdiff} for the latest results. Equally interesting and
challenging is the classification of (Teichmüller-)framed differentials, see
\cite{KQ2} for simple zero and higher order pole case and work of
Walker (\cite{walker}) and Calderon-Salter (\cite{CShigherspin}) for the latest
results in the holomorphic case.
In almost all known cases components are classified by spin invariants,
hyperellipticity and torsion conditions in genus one, some low genus strata
of quadratic differentials providing exceptions (see \cite{lanneau} and also
\cite{CM14}).
\par
For spaces of stability conditions the stability manifold is known to be connected
(and simply connected) for instance for the bounded derived category of
curves (Okada \cite{okada} for genus $g=0$ and Macr\`i \cite{MacriCurves} for
higher genus) or of some abelian surfaces and very general $K3$ surfaces (\cite{HMS}).
An example for a non-connected space of stability conditions is given by
Meinhardt and Partsch \cite{sven}. They study the quotient category $\calD^b_{(1)}(X)$
of the bounded derived category $\calD^b(X)$ on a smooth projective variety~$X$
with  $\dim(X)\geq 2$ by the full subcategory of complexes of sheaves supported in
codimension $c>1$. The classification of components is based on
computing $\widetilde{\GL}_2^+(\R)$ orbits of $\Stab(\calD^b_{(1)}(X))$.
\par
The walls-have-ends result Corollary~\ref{cor:pathB2} implies that connectivity of
spaces of quadratic differentials is equivalent to the connectivity of the
corresponding exchange graphs. Via our main theorem this gives a criterion to
show if the spaces $\Stab^\bullet(\D(\colsur))$ are disconnected. In fact,
Example~\ref{torusexample} together with the isomorphism~\ref{intro_EG_iso} and
Theorem~\ref{intro:KrhoBihol} shows disconnectivity for example already
for the space of quadratic differentials with a zero of order three and a triple pole.
\par
\begin{cor} For a surface~$\colsur$ of genus one with one boundary component
and one zero with weight~$\w = (3)$  the principal part $\Stab^\bullet(\D(\colsur))$
is disconnected.
\end{cor}

\subsection*{Acknowledgments} This projects has benefited from many inspiring
discussions, and we therefore thank Dylan Allegretti, Tom Bridgeland,
Jon Chaika, Xiaowu Chen, Yitwah Cheung, Merlin Christ, Haibo Jin,
Francesco Genovese, Fabian Haiden, Zhe Han, Bernhard Keller,
Paolo Stellari, Alex Wright, Dong Yang, and Yu Zhou.
Special thanks go to Ivan Smith and Dawei Chen for helping us to join initially
independent overlapping projects.
\par

% !TeX root    = stab_qdiff_collapse.tex
%%%%%%%%%%%%%%%%%%%%%%%%%%%%%%%%%%

%%%%%%%%%%%%%%%%%%%%%%%%%%%%%%%%%%%%%%%%%%%%%%%%%%
\section{Preliminaries on categories and the stability manifold}\label{sec:prelim}
%%%%%%%%%%%%%%%%%%%%%%%%%%%%%%%%%%%%%%%%%%%%%%%%%%

In this section we set some notation and collect background material about
stability conditions on triangulated categories, quotient categories, quivers
with potential and the $\CY_3$-categories associated to them. References are
\cite{tiltingbook, gelfandmanin,f-pervers, BrStab,bridgeland_survey, neeman,
DWZ,keller11}.
\par\medskip
\paragraph{\textbf{Notation}}
We fix $\acf$ an algebraically closed field for simplicity.
All categories considered in this paper are $\acf$-linear and all subcategories
are full. For an additive category $\calC$ with a subcategory (or set of objects)
$\calB$, we define
\[
    \calB^{\perp_\calC}\,:=\,\left\{C\in\calC:\Hom_\calC(B,C)=0\ \forall\,B\in \calB\right\},
\]
and similarly ${^{\perp_\calC}}\calB$. We will  omit the subscript~$\calC$ when
there is no confusion. If $\calC$ is triangulated, we denote by $\thick(\cl B)$ the smallest
thick additive full subcategory in $\calC$ containing $\cB$. It is triangulated.
Moreover, for full subcategories $\calH_1,\calH_2$ of an abelian or a triangulated
category~$\calC$, we let
\begin{align*}
    \calH_1*\calH_2&:=\{M\in\calC\mid\text{$\exists$ s.e.s or triangle $T\to M\to F$ s.t. $T\in\calH_1, F\in\calH_2$} \},\\
	\bra \cl B\ket&:=\{M\in\calC\mid\text{$\exists$ s.e.s or triangle $T\to M\to F$ s.t. $T,F\in\mathrm{Add}\, \cl B$} \}.
\end{align*}
Consequently  $\calH_1*\calH_2\subset \langle \calH_1,\calH_2\rangle \supset \calH_2*\calH_1$.
If $\calH_1,\calH_2$ satisfy $\Hom(\calH_1,\calH_2)=0$, then we write
$\calH_1\perp\calH_2$ for $\calH_1*\calH_2$.
\par
A finite length abelian category will be said to be \emph{finite} if it has finitely
many simple objects. Throughout the paper we will use the following complex half-planes
\ba \halfplane&:=\left\{\rho e^{\pi i \theta}\,|\, \rho\in \R_{>0},\ 0<\theta <1\right\}, \\
\chalfplane&:=\left\{\rho e^{\pi i \theta}\,|\, \rho\in \R_{>0},\ 0< \theta \leq1\right\}.
\ea
%%%%%%%%%%%%%%%%%%%%%%%%%%
\subsection{Structures on triangulated and abelian categories}\label{sec:cats}
%%%%%%%%%%%%%%%%%%%%%%%%%%
Here we collect some background material about bounded t-structures, stability conditions
on triangulated categories, and quotients of abelian and triangulated categories.
\par
\medskip
\paragraph{\textbf{Bounded t-structures and simple tilts}}
A \emph{t-structure} on a triangulated category $\calD$ is
the torsion part of a torsion pair (so that $\calD=\cl P \perp \cl P^\perp$) satisfying $\cl P[1]\subset \cl P$.
The $t$-structure is said to be \emph{bounded} if $\calD=\cup_{m\in\Z}\cl P[m]\cap\cl P^\perp[-m]$.
The \emph{heart} of a bounded t-structure $\cl P\subset\calD$ is the full subcategory $\cl H=\cl P\cap\cl P^\perp[1]$, which is abelian.
Denote by $K(\h)\simeq K(\calD)$ their Grothendieck groups.
The bounded t-structure and its heart determine each other uniquely and hence we will use them interchangeably.
\par
Given a torsion pair $(\calT,\calF)$ in the abelian heart of a bounded t-structure $\cl H= \calT\perp\calF$,
there is a new heart $\mu_{\torsionfree}^\sharp\h:= \torsion \perp_\calD \torsionfree[1]$,
known as \emph{forward tilt with respect to $(\torsion,\torsionfree)$} of $\h$,
see e.g.~\cite{tiltingbook}.
Obviously, tilting commutes with autoequivalences, i.e., for any $\Phi\in\Aut(\calD)$,
\begin{equation}\label{phi_mu}
\Phi\left(\mu_\torsion^{\sharp}(\h)\right)
	\= \mu^{\sharp}_{\Phi(\torsion)} \Phi(\h)\,.
\end{equation}
\par
A forward tilting $\h\to\h^\sharp$ is simple if the corresponding torsion free class $\torsionfree$ is generated by a simple $S$ of $\h$, i.e. $\torsionfree=\<S\>$.
For a finite heart~$\h$ with a rigid simple~$S$ the simple forward tilt with respect
to~$S$ exists and is denoted by $\mu_{S}^{\sharp}\h$.
Moreover, by a tilting formula in \cite[Proposition~5.4]{kq}, the new simples are
$\Sim\mu^\sharp_S\h=\{S[1]\}\;\cup\;\{ \psi^\sharp_S(X)\mid X\in \Sim\h,X\neq S\}$
where
\begin{eqnarray} \label{newsimples}
    \psi^\sharp_S(X) \=
    \Cone\left(X\xrightarrow[]{f} S[1]\otimes\Ext^1(X, S)^* \right) [-1]\,.
\end{eqnarray}
\par
Recall that the partial order on hearts $\h_1\le\h_2$ means $\calP_1\supset\calP_2\Leftrightarrow\calP_1^\perp\supset\calP_2^\perp$.
The following is a characterization for all hearts in the interval $[\h,\h[1]]$.
\par
% WE NEED THIS, WE CITE THIS!
\begin{lemma}\cite[Remark~3.3]{kq}\label{lem:nearby_heart}
Fix a heart~$\h$.  Then a heart $\h'$ is a forward tilt of~$\h$ if and only if $\h
\le \h' \le \h[1]$. In this case, the tilting is with respect to the
torsion pair $\torsion=\h' \cap \h$ and $\torsionfree=\h'[-1] \cap \h$.
\end{lemma}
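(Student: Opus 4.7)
I would translate between hearts and their t-structures using the relation $\h_1 \le \h_2 \iff \calP_1 \supset \calP_2$ and the explicit tilt formula $\mu^\sharp_\torsionfree(\h) = \torsion \perp \torsionfree[1]$ from Proposition \ref{tilt}.

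\textbf{Forward direction.} If $\h' = \mu^\sharp_\torsionfree(\h) = \torsion \perp \torsionfree[1]$, I would verify $\calP \supset \calP' \supset \calP[1]$ generator by generator. Since $\torsion \subset \h \subset \calP$ and $\torsionfree[1] \subset \h[1] \subset \calP$, extension-closure yields $\h' \subset \calP$, and iteration with positive shifts gives $\calP' \subset \calP$. For $\calP[1] \subset \calP'$: any $Y \in \h[1]$ has $Y[-1] \in \h$, which the torsion pair decomposes as $T' \to Y[-1] \to F'$; shifting exhibits $Y$ as an extension of $F'[1] \in \h'$ by $T'[1] \in \h'[1]$, both in $\calP'$.

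\textbf{Backward direction.} Given $\calP \supset \calP' \supset \calP[1]$, set $\torsion := \h' \cap \h$ and $\torsionfree := \h'[-1] \cap \h$. The vanishing $\Hom(\torsion, \torsionfree) = 0$ is immediate since both $T$ and $F[1]$ lie in $\h'$, giving $\Hom_\calD(T, F) = \Hom_{\h'}^{-1}(T, F[1]) = 0$. For the short exact sequence decomposing $X \in \h$, I would take the $\calP'$-truncation triangle $A \to X \to B$ with $A \in \calP'$, $B \in \calP'^\perp$. The key claim is $A, B \in \h$, from which $A \in \calP' \cap \h = \torsion$ and $B \in \calP'^\perp \cap \h = \torsionfree$ (both intersections identify with the defining subcategories). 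The claim $A \in \calP^\perp[1]$ (hence $A \in \h$) and $B \in \calP$ (hence $B \in \h$) would follow from long-exact-sequence chases applied to the triangle, using $X \in \h$, the inclusion $\calP'^\perp \subset \calP[1]^\perp = \calP^\perp[1]$ (a consequence of $\calP[1] \subset \calP'$), and $\calP^\perp \subset \calP^\perp[1]$ (closure of $\calP^\perp$ under negative shifts).

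With the torsion pair established, $\mu^\sharp_\torsionfree(\h) = \torsion \perp \torsionfree[1] \subset \h'$ by extension-closure of $\h'$. For the reverse, any $Y \in \h'$ satisfies $Y \in \calP \cap \calP^\perp[2]$ (from the t-structure inclusions), so its $\h$-cohomology is concentrated in degrees $\{-1, 0\}$, yielding a canonical triangle $H^{-1}_\h(Y)[1] \to Y \to H^0_\h(Y)$; a symmetric Hom-chase shows $H^0_\h(Y) \in \torsion$ and $H^{-1}_\h(Y) \in \torsionfree$, placing $Y$ in $\torsion \perp \torsionfree[1]$. \textbf{The main obstacle} is orchestrating the many containments of $\calP, \calP', \calP^\perp, \calP'^\perp$ and their shifts in the Hom-vanishing computations; each individual step is routine but several must be combined in the right order.
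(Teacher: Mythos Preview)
Your proof is correct and self-contained; the paper does not actually give a proof but simply cites \cite[Remark~3.3]{kq}, so in that sense you are doing more than the paper does. Your argument is the standard one: translate $\h \le \h' \le \h[1]$ into the aisle inclusions $\calP \supset \calP' \supset \calP[1]$, read off $\torsion$ and $\torsionfree$ as the appropriate intersections, and check the two containments $\h' \subset \torsion \perp \torsionfree[1]$ and $\torsion \perp \torsionfree[1] \subset \h'$ by truncation triangles and extension-closure. All the containment chases you sketch (e.g.\ $\calP'^\perp \subset \calP^\perp[1]$ from $\calP[1] \subset \calP'$, $\h \subset \calP'^\perp[1]$ from $\calP^\perp \subset \calP'^\perp$, and the identification $\calP' \cap \h = \h' \cap \h$) go through as stated. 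The one place where you could be slightly more explicit is the verification that $H^0_\h(Y) \in \h'$ and $H^{-1}_\h(Y)[1] \in \h'$ in the final step, but the triangle rotations you indicate do the job.
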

\par

%%%%%%%%%%%%%%%%%%%%%%%
\medskip\par
\paragraph{\textbf{Stability structures}}
%%%%%%%%%%%%%%%%%%%%%%%
%The content of this section and the next is taken from \cite{BrStab}. For the support property and the manifold structure see also \cite{ks,BMS}. In the sequel $\calD$ denotes a triangulated category and $\h$ an abelian category, with finite rank Grothendieck groups.
\par
A \emph{stability function} on an abelian category $\h$ is a group homomorphism $Z:K(\h)\to \bC$,
such that for any $0\neq A\in\h$, $Z(A)\in \chalfplane$.
%:=\{re^{\pi i \theta}\in\R | r\in \R_{>0},\ 0<\theta\leq 1\}$.
An object $A\in\h$ is said to be \emph{$Z$-semistable} if for any non-zero proper sub-object $B\hookrightarrow A$ then $\frac{1}{\pi}\arg Z([B])\leq \frac{1}{\pi}\arg Z([A])$. It is called \emph{stable} if the inequality holds strictly.
The quantity $\frac{1}{\pi}\arg Z(A)$ is called the \emph{phase} of $A$. A stability function is called a \emph{central charge} if it moreover satisfies the so-called
\emph{support property} and \emph{Harder-Narasimhan property}, see \cite{BrStab,ks,BMS}
for more details.
\par
A \emph{stability condition} $\sigma$ on $\calD$ is a pair $\sigma=(Z,\h)$,
consisting on a heart $\h$ together with a central charge $Z\in\Hom(K(\h),\bC)$.
Let
\[
    \cl P(\phi) := \{E[\lfloor \phi \rfloor]  \; | \; E \;
        \text{is}\; Z\text{-semistable in } \; \h \; \text{of phase } \phi - \lfloor \phi \rfloor \}\ , \quad \forall \phi\in\mathbb{R}
    \]
be the \emph{slice} consisting of semistable objects of phase $\phi$.
The collection of all slices is known as a slicing, denoted by
    $\cl P_\bR:=\{\cl P(\phi)\}_{\phi\in\R}\subset \calD$.
Denote by $\cl P(I)=\< P(\phi)\mid \phi\in I \>$ for any interval $I\subset \R$.
The heart of a stability condition $\sigma$ is $\cl H=\cl P(0,1]$, and the data of a stability conditions $\sigma=(Z,\h)$ is equivalent to a pair $(Z,\cl P_{\bR})$
with certain compatibility conditions, see \cite{BrStab}. We recall the main result 
of \cite{BrStab}:
\par
\begin{theorem}\label{prop_stab_mfd}
The set of all stability conditions on $\D$ form a complex manifold $\stab(\calD)$ with local coordinates given by
the central charge $Z\in\Hom(K(\calD),\bC)$.
\end{theorem}
\par
The group of autoequivalences $\Aut(\calD)$ acts on the left on $\stab(\calD)$ by
\[
    \Phi.(Z,\h)= \left( Z\circ\Phi^{-1}, \Phi(\h)\right),
\]
which commutes with the action by scalars, for any $\lambda \in \bC$:
\be \label{eq:Caction}
\lambda(Z,\cl P)\=(e^{-\pi i \lambda}Z, \cl P') \quad \text{where} \quad \cl P'(\phi)
\=\cl P(\phi + \Re \lambda).
\ee
\par
\medskip
\paragraph{\textbf{Generic-finite connected components}}
We denote by $\cube(\h)$ the locus of stability conditions supported on a heart~$\h$
and by $\cube_0(\h)$ its interior. If $\cl H$ is finite, then $\cube(\cl H)=\chalfplane^{|\Sim(\cl H)|}$.
\par
\begin{definition} \label{def:wallsandchambers}
Let $\cl H$ be a finite heart, and $\cl W_S(\h) \subset\cube(\h)$ be the real
codimension~1 subset for which a simple~$S$ has phase 1 and all other simples
in $\h$ have phase in $(0,1)$. We call $\cl W_S(\h)$ a \emph{wall}. A \emph{chamber}
is a connected component of the complement of the closure of the union of
the walls in $\stab(\calD)$.
\end{definition}
\par
If $\cl H_1$ and $\cl H_2$ are finite hearts, the intersection $\cube(\calH_1)
\cap\overline{\cube(\calH_2)} \= \cl W_S(\calH_1)$ if and only if $\calH_2
\=\mu^\sharp_S\calH_1$ by~\cite{woolf1}. We let
\be
    \Stab_0(\calD) \= \bigcup_{\calH\,\text{finite}} \cube_0(\calH)
    \quad\text{and}\quad
%\ee
%be the union of all chambers of all finite hearts and let
%\be
    \Stab_2(\calD) \= \bigcup_{\calH\,\text{finite}} \cube(\calH).
\ee
The indexing convention is parallel with the one that we will use for spaces of
quadratic differentials in Section~\ref{sec:wallsends}.
If a connected component of $\Stab(\calD)$ has been specified, we decorate these spaces
by a~$\circ$ accordingly.
\par
\begin{definition}\label{def:tame}
A connected component $\Stap(\calD)$ is called
\begin{itemize} [nosep]
	\item a \emph{finite type component}, if $\Stap\D=\Stap_2\D$;
	\item a \emph{generic-finite type component}, if $\Stap\D=\bC\cdot\Stap_2\D$.
	\end{itemize}
\end{definition}
\par
\medskip
\paragraph{\textbf{Abelian and triangulated quotient categories}}
Recall that a subcategory $\cl S$ of an abelian category $\cA$ is called
a \emph{Serre subcategory} if it is abelian and for any short exact sequence
$0 \to A_1 \to E \to A_2 \to 0$
in $\cA$, we may conclude $E \in \cl S \text{ if and only if }A_1,A_2 \in \cl S$.
In such a case, the quotient category $\cA/\cl S$ is also abelian, cf.
\cite[{Lemma~A.2.3}]{neeman} and \cite{gabriel}.

\par

On the other hand, given a triangulated category $\calD$ and a triangulated subcategory
$\calV\hookrightarrow \calD$, we can construct the so-called
\emph{Verdier quotient} $\calD/\calV$.
If $\calV$ is thick (i.e. closed under direct summands), then
\[0 \to \calV \to \calD \to \calD/\calV \to 0\]
is a short exact sequence of triangulated categories with exact
functors, \cite[Proposition~2.3.1]{V}.

\begin{rmk*} Whenever $\pi:\calD\to \calD/\cl V$ is a quotient functor of triangulated categories, and $\cl B$ is a subcategory of $\calD$, by $\pi(\cl B)$ we will mean the \emph{essential image} of $\cl B$ through~$\pi$. This will apply in particular to the image of abelian hearts $\h\subset\calD$.\end{rmk*}

%%%%%%%%%%%%%%%%%
\subsection{Quivers with potential, mutation and Jacobian algebras}
\label{sec:backquivers}
%%%%%%%%%%%%%%%%%%%
In this article $(Q,W)$ is a non-degenerate finite (possibly disconnected) oriented quiver $Q=(Q_0,Q_1,s,t)$ that has no loops or 2-cycles, with potential $W$ considered up to right equivalence. The cyclic derivative of $W$ with respect to an arrow~$a\in Q_1$ is denoted~$\partial_a W$. We refer to \cite{DWZ} for all these basic notions.
\par
The operation of \emph{mutation at a vertex $i\in Q_0$}, defined for instance in \cite[Section 2]{KY}, produces a new quiver with potential denoted $\mu_i(Q,W)$ or $(\mu_i Q,\mu_i W)$, that will have no loops nor 2-cycles.
\par
The Jacobian algebra $\cl J(Q,W)$ of $(Q,W)$ is the quotient of $\widehat{\acf Q}$, the completion of the path algebra with respect to bilateral ideals generated by arrows, by the ideal $\del W:=\bra \del_a W \mid a\in Q_1\ket$.
The category of finite dimensional modules over $\cl J(Q,W)$ is denoted
\[\h(Q,W):=\modules \cl J(Q,W),\]
sometimes shortened to $\h_Q$ when the quiver with potential is clear.
It is a finite length, finite, abelian category, see for instance \cite[{Section 3}]{keller_dilog}.
The vertices of the quiver give the simple objects of the module category $\Sim(\h_Q)$, % and these form a basis of $\h(Q,W)$
so that in particular the  Grothendieck group is $K(\h(Q,W))=\Z^{|Q_0|}.$
\par\smallskip
Let $I\subset Q_0$ be a proper subset of the set of vertices of $Q$, and $I^c$ its complement.
There is an operation of \emph{restriction} on $(Q,W)$, to get a new quiver with potential, denoted by $(Q_I,W_I)$
with vertices $(Q_I)_0=I$. It is obtained by deleting
vertices in $I^c$, arrows incoming or outgoing from
vertices in $I^c$, and all cycles in $W$ passing through
vertices in $I^c$, \cite{DWZ}. We call $(Q_I,W_I)$ a \emph{(full) subquiver}.
When $i\in I$, the operations of mutation $\mu_i$ and of restriction $\mid_I$ commutes,
cf. \cite{LabaFragQP}.
The following is obvious.
\begin{lemma}
Let $k\not\in I\subset Q_0$ be a vertex of $Q$ such that there are no arrows from~$i$
to~$k$ or from $k$ to $i$ for all $i\in I$. Then
$(\mu_k(Q,W))_I = (Q_I,W_I)$.
\end{lemma}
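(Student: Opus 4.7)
The plan is to verify the identity by direct inspection, handling the arrows and the potential separately and using the hypothesis to rule out any interaction between the mutation at $k$ and the restriction to $I$.

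\textbf{Arrows.} Recall that $\mu_k$ modifies $Q_1$ by (1) creating new composite arrows $[ab]\colon s(a)\to t(b)$ for each pair $a\colon s(a)\to k$, $b\colon k\to t(b)$, and (2) replacing every arrow incident to $k$ by its opposite $a^*$. The hypothesis forbids arrows between $I$ and $k$ in either direction, so every incoming arrow $a$ at $k$ satisfies $s(a)\in I^c$ and every outgoing arrow $b$ at $k$ satisfies $t(b)\in I^c$. Consequently no new arrow $[ab]$ has both endpoints in $I$, and every reversed arrow $a^*$ retains $k\in I^c$ as an endpoint. Hence the arrows of $(\mu_k Q)_I$ are exactly the arrows of $Q$ with both endpoints in $I$, i.e.\ the arrows of $Q_I$. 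The same analysis shows that the subsequent 2-cycle reduction inside $\mu_k(Q,W)$ only involves arrows touching $I^c$, so it leaves $Q_I$ untouched.

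\textbf{Potential.} Write $\mu_k W = W' + W''$ with $W''=\sum_{a,b}[ab]b^*a^*$ and $W'$ obtained from $W$ by substituting $[ab]$ for each composition $ab$ through $k$. A cyclic word contributing to $W_I=\psi_I(W)$ only visits vertices of $I$, hence does not pass through $k$, hence is untouched by the substitution; conversely every cycle in $W''$ passes through $k$, so $\psi_I(W'')=0$. Combined with the arrow analysis, this gives $\psi_I(\mu_k W)=\psi_I(W)=W_I$ even after the 2-cycle cleanup, yielding $(\mu_k(Q,W))_I=(Q_I,W_I)$.

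\textbf{Expected obstacle.} I expect no serious obstacle: unlike Lemma~\ref{lem_mures}, where one must invoke the Splitting Theorem of \cite{DWZ} to absorb right-equivalences arising from 2-cycles created across $I$, here the hypothesis immediately confines all mutation-induced arrows and cycles to $I^c$, so the commutation of $\mu_k$ and $\psi_I$ holds on the nose rather than up to right-equivalence.
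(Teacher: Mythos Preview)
Your proof is correct and is precisely the direct verification the paper has in mind: the paper provides no proof for this lemma at all, introducing it only with the phrase ``However, obviously:''. Your argument simply spells out what the paper leaves as immediate.
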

\par
The finite-length property of $\h(Q,W)$ immediately implies:
\begin{lem} \label{lem_Subquiv_Serre}
	There is a bijection between Serre subcategories of $\h(Q,W)$ and full
	sub-quivers $(Q_I,W_I)$.
\end{lem}
\par
We will be interested in the quotient abelian category
\begin{equation}\label{standard_quotheart}\h(Q,W)/\h(Q_I,W_I)\end{equation}
 which is a category of modules over a finite dimensional algebra as well, \cite[Propositions~2.2 and~5.3]{geigle}. In particular it is a finite length, finite abelian category. The Grothendieck group splits as
 \[K(\h(Q,W))\simeq K(\h(Q_I,W_I))\oplus K(\h(Q,W))/K(\h(Q_I,W_I)).\]

%%%%%%%%%%%%%%%%%%%%%%
\subsection{$\CY_3$ categories associated to a quiver with potential}
\label{subsec_quiver3cat}
%%%%%%%%%%%%%%%%%%%%%%
We denote by $\Gamma := \Gamma(Q,W)$ the \emph{complete Ginzburg differential
graded (dg) algebra} associated to a quiver with potential $(Q,W)$. The underlying
graded algebra~$\Gamma$ is the completion of the path algebra of a graded quiver
obtained from~$Q$ and the differential is given by the potential $W$. It is defined
in \cite{ginzburg,K1}. The zero-th homology of this algebra $H_0\big(\Gamma(Q,W)\big)
\simeq \cl J(Q,W)$ gives back the Jacobian algebra of the original quiver with potential.
\par
Recall that a category~$\calC$ is said to be Calabi-Yau of dimension~$N$, or
simply~$\CY_N$ if for any objects $E,F\in \calC$ there is a natural isomorphism
$\nu:\Hom_\calC(E,F)\stackrel{\sim}{\to}\Hom_\calC(F,E[N])^\vee$ of $\acf$-vector spaces.
\par
Let $\calA$ be a dg algebra with derived category $\D(\calA)$. Denote by $\per(\calA)$
and $\pvd(\calA)$ the \emph{perfect derived category} and the \emph{perfectly valued
derived category} of~$\calA$, respectively. The perfect category $\per\Gamma$ is
generated by the indecomposables projective dg modules $P_i=e_i\Gamma$, $i=1,\ldots,n$.
The perfectly valued derived category $\pvd(\Gamma)$ coincides with the
subcategory of $\D(\Gamma)$ consisting on dg modules of total finite-dimensional
homology.
\par
 We collect some well-known results from \cite{keller11} and \cite[Section~3-4]{KY}.
\par
\begin{prop} \label{prop:mutationiso}
For $\Gamma=\Gamma(Q,W)$ as above the following statements hold:
\begin{itemize}[nosep]
\item The category $\pvd(\Gamma)$ is $\Hom$-finite and $\CY_3$,
for any non-degenerated quiver with potential $(Q,W)$, and it is contained in $\per(\Gamma)$.
\item If $\Gamma'=\Gamma(Q',W')$ is obtained by mutation,
then $\pvd(\Gamma')\isom \pvd(\Gamma)$ and $\per\Gamma'\isom\per\Gamma$.
\item The category $\pvd(\Gamma)$ admits a standard heart of bounded t-structure
\[
    \calH(\Gamma)=\calH(Q,W):=\modules \cl{J}(Q,W).
\]
\end{itemize}
\end{prop}
The $\CY_2$ Verdier quotient
$\per(\Gamma)/\pvd(\Gamma)$,
sitting in a short exact sequence of triangulated categories,
\begin{gather}%\label{eq:ses}
    0\to \pvd(\Gamma)\to\per\Gamma\xrightarrow{\pi_\Gamma} \calC(\Gamma)\to0
\end{gather}
is called the \emph{cluster category} and denoted by $\calC(\Gamma)$, following
Amiot \cite{amiot}. %It is $\CY_2$. % and, by Proposition \ref{prop:mutationiso}, $\calC(\Gamma')\isom\calC(\Gamma)$,
%if $\Gamma$ and $\Gamma'$ are associated with quivers with potential related by mutations.
\par
Let $(Q_I,W_I)=(Q,W)_I$ denote a full subquiver of $(Q,W)$, as in the previous subsection, and  $\Gamma_I=\Gamma(Q_I,W_I)$, $\cl J_I=\cl J(Q_I,W_I)$. The standard bounded t-structure $\h(\Gamma)$ on $\pvd(\Gamma)$ restricts
to the standard bounded t-structure
\[\modules \cl J_I=:\h(\Gamma_I)=\pvd(\Gamma_I)\cap\h(\Gamma)\subset \h(\Gamma)
\]
on the subcategory $\pvd(\Gamma_I)=\thick_{\pvd(\Gamma)}\h(\Gamma_I)
\subset\pvd(\Gamma)$.
\par
We are interested in the Verdier quotient $\pvd\Gamma/\pvd\Gamma_I$ and
in those hearts that are the images, under the quotient functor, of a heart in $\pvd \Gamma$.
We will study a component of the exchange graph of $\pvd\Gamma/\pvd\Gamma_I$ containing
the heart $\h(\Gamma)/\h(\Gamma_I)$ in Section \ref{sec:cat sub/col}.

%=========================================================
%=========================================================
\section{Decorated marked surfaces and quadratic differentials}
\label{sec:dmsQD}
%=========================================================

%=====================
\subsection{Quadratic differentials}
%=====================

We set up notion for quadratic differentials, using the book of Strebel \cite{Strebel} as background.
\par
Let $X$ be a compact Riemann surface and $\omega_X$ be its holomorphic
cotangent bundle. A \emph{meromorphic quadratic differential~$q$} on $X$ is a
meromorphic section of the line bundle $\omega_{X}^{2}$. We denote by~$\bfz$
the collection of points where~$q$ has a pole or vanishes,
the \emph{singularities} or \emph{critical points} of~$q$. These can be
grouped into the finite critical points (zeros and simple poles) of~$q$,
and infinite critical poles (higher order poles).
We denote by $Z_j(q)$ the set of finite critical points of~$q$ or order~$j$
and $P_k(q)$ the set of poles of~$q$ with order $k\geq 2$
%NO.\anna{$k\leq -2$}.
Finally, let $Z(q)=\bigcup_{j \geq -1} Z_j(q)$ and $P(q)=\bigcup_{j\geq 2} P_j(q)$
%NO\anna{$P(q)=\bigcup_{k\leq -2} P_k(q)$}
and group them together as $\Crit(q)=Z(q) \cup P(q)$. We let
$\w = (w_1,\ldots, w_r)$ be the orders of the finite critical points
and $\w^- = (w_{r+1},\ldots, w_{r+b})$ be the negative orders of higher
order poles (i.e.\ $w_i \leq -2$ for $i \geq r+1$). The
tuple $(\w,\w^-)$ is the \emph{signature} of the quadratic differential.
\par
\medskip
\paragraph{\textbf{The canonical covering construction.}}
Associated with a quadratic differential~$q$ on a compact curve~$X$
there is a {\em canonical double cover $\wh\pi: \wh{X} \to X$} such that
$\wh{\pi}^* q = \omega^2$ is the square of an abelian differential, unique
up to sign. See e.g.\ \cite[Section~2.1]{kdiff} for various methods of
construction. The tuple of preimages of the singularities of~$(X,q)$
is denoted by~$\wh\bfz$, and decomposed into the finite critical
points~$\wh{Z}$ and infinite critical points~$\wh{P}$. To compute the
signature of the double cover we define
\be \label{eq:typemuhat}
(\wh{\w},\wh{\w}^-)
\,:=\, \Bigl(\underbrace{\wh w_1, \ldots, \wh w_1}_{\gcd(2,w_{1})},\,
\underbrace{\wh w_2,  \ldots, \wh w_2}_{\gcd(2,w_{2})} ,\ldots,\,
\underbrace{\wh w_{r+b}, \ldots, \wh w_{r+b}}_{\gcd(2,w_{r+b})} \Bigr)\,,
\ee
where $\wh w_i := \tfrac{2+w_{i}}{\gcd(2,w_{i})}-1$ and where now
$\wh{\w}^-$ is the tuple of the negative entries among these integers.

\par
\medskip
\paragraph{\textbf{Trajectory structure}}
We now turn to the global trajectory structure of a quadratic differential~$q$,
following \cite{Strebel}. We suppose throughout that $q$ has at least one zero
and at least one \emph{infinite critical point}, i.e.\ a pole of
order~$\leq -2$ or equivalently that $\w^- \neq \emptyset$. We do not suppose
that~$q$ has simple zeros (i.e.\ we do \emph{not} work only with
Gaiotto-Moore-Neitzke (GMN) differentials).
\par
A \emph{saddle connection}
is  a trajectory (in some arbitrary direction) whose maximal domain is a finite
interval. Both its end points are zeros of~$q$. A \emph{saddle trajectory} is a saddle
connection in the horizontal direction. A trajectory is \emph{closed} if its a
saddle trajectory and both its end points coincide. The remaining trajectories are
either
\begin{enumerate}[nosep]
	\item \emph{separating}, i.e., approaching an infinite critical point at precisely
	one end,
	\item \emph{recurrent} in at least one of its directions, or
	\item \emph{generic}, approaching an infinite critical point in both directions.
\end{enumerate}
\par
We now fix the direction to be the horizontal direction unless specified
otherwise, so 'trajectories' refers to 'horizontal trajectories'.
Removing from~$X$ the separating trajectories and saddle trajectories decomposes
the surface into connected components, which are of the following types.
\begin{enumerate}[nosep]
	\item \emph{ring domains} or \emph{cylinders} that are foliated by closed
	trajectories,
	\item \emph{horizontal strips} isometric to $S = \{a < \Im(z) < b\}$
	with $q|_S =  dz^{\otimes 2}$,
	\item \emph{half-planes}, isometric to $\bH$ with $q|_{\bH} = dz^{\otimes 2}$, or
	\item \emph{spiral domains}, the interior of the closure of a recurrent trajectory.
\end{enumerate}
\par
A ring domain is called \emph{degenerate} if one of its boundary components
is a double pole. A saddle trajectory is called \emph{borderline} if it lies
on the boundary of a degenerate ring domain, half-plane, or horizontal strip.
\par
The quadratic differential~$q$ is called \emph{saddle-free} if is does not have
any saddle trajectories. By \cite[Lemma~3.1]{BS15} such a differential
does have neither closed   trajectories nor recurrent trajectories.
In particular the complement of its saddle trajectories and separatrices is
a union of half planes and horizontal strips. We call this the
\emph{horizontal strip decomposition} of $(X,q)$.
\par
Given a quadratic differential~$q$ on~$X$ we define the closed subsurface~$X^+$
to be the closure of the union of all horizontal strips, half-planes and
degenerate ring domains. The closed subsurface~$X^-$ is defined to be the
closure of the union of all spiral domains and non-degenerate ring domains.
The two subsurfaces $X^\pm$ meet along a collection of saddle connections,
all of which are borderlines. See Figure~\ref{fig:logow} for examples.
\par
If $\eta$ is a path tracing a saddle connection on~$X$, we let $\eta'$ and $\eta''$
be the two lifts of the path to~$\wh{X}$. We define the lifted class
$[\wh{\eta}] \in H_1(\wh{X} \setminus \wh{P}, \wh{Z}, \bZ)$ to be
$[\wh{\eta}] = [\eta']$ if $[\eta']+[\eta'']=0 \in H_1(\wh{X} \setminus
\wh{P}, \wh{Z}, \bZ)$ and we define $[\wh{\eta}] = [\eta']- [\eta'']$
otherwise. We declare two saddle connections $\eta_1$ and $\eta_2$ to be
\emph{hat-homologous} if for some choice of orientation the equality
$[\wh{\eta}_1] = [\wh{\eta}_2]$ holds in $H_1(\wh{X} \setminus \wh{P}, \wh{Z},
\bZ)$. We say that two saddle connections are \emph{hat-proportional} if
$[\wh{\eta}_1]$ and $[\wh{\eta}_2]$ are proportional. The characterization
in \cite[Proposition~1]{MaZo} via rigid configurations shows that saddle connections
are hat-proportional if and only if they are hat-homologous. This is the reason
for our definition of $[\wh{\eta}]$, which differs sometimes by a factor~$2$
from the one in \cite{BS15}, compare with \cite{IkedaAn}.

%=========================================================
\subsection{Decorated marked surfaces}
\label{app:DMSqdiff}
%=========================================================

The notion of marked surface encodes the raw combinatorics of a quadratic
differential with the limit points of trajectories at the poles and possible additional
auxiliary punctures, but without specifying order and location of the
zeros. Marked points are usually referred to as \emph{prongs} at the
poles in flat surface literature. Here we follow \cite[Section~3]{BS15} and
\cite[Section~4]{KQ2} to relate quadratic differentials and weighted marked surfaces.
\par
\begin{definition}%\cite{FST}
A \emph{marked surface} $\surf=(\surf,\MM,\PP)$ consists of a connected
bordered differentiable  surface with a fixed orientation, together with a finite
set $\MM= \bigcup_{i=1}^b M_i$ of marked point on the boundary
$\partial\surf=\bigcup_{i=1}^b
\partial_i$ and a finite set $\PP=\{p_j\}_{j=1}^p$ of punctures in its interior
$\surf^\circ=\surf-\partial\surf$, such that each connected component of
$\partial\surf$ contains at least one marked point.
\end{definition}
\par
Up to homeomorphism, $\surf$ is determined by the following data
\begin{itemize}[nosep]
\item the genus $g$;
\item the number $b$ of boundary components;
\item the number $p=\#\PP$ of punctures;
\item the negative integer partition~$\w^-$ of $-m=-\#\MM$ into $b$
parts describing the number of marked points on its boundary, and consisting in 
$w^-_i=-\#\MM_i$.
\end{itemize}
The rank of $\surf$ is defined to be
\begin{gather}\label{eq:n}
	N\=6g+3p+3b+m-6.
\end{gather}
\par
\emph{For simplicity, we only consider the $\PP=\emptyset$ case in this paper.}
\par
Decorations and weight add to a marked surface the data of
the location and orders of zeros of a differential.
\par
\begin{definition}
A \emph{decorated marked surface} (abbreviated as \emph{DMS}) is obtained
from a marked surface $\surf$ by decorating it with a set
$\Delta= \{z_i\}_{i=1}^r$ of points in the surface interior~$\surf^\circ$.
These points are called \emph{finite critical points}.
A \emph{weight} function on $\Tri$ is a $\Z_{\geq -1}$-valued function
	\[
	\w\colon \Tri\to\Z_{\geq -1}\,.
\]
We write $r = |\w|=|\Tri|$ for the number of finite critical points and
$\|\w\|=\sum_{Z\in\Tri} \w(Z)$ for their total weight. We say $\w$
is \emph{compatible} with $\surf$ if
	\begin{gather}\label{eq:cp1}
		\|\w(Z)\| - (m+2b) \=4g-4\,.
	\end{gather}
\par
If~$\w$ and $\surf$ is compatible, we will write $\sow=(\surf, \Tri, \w)$
and call this tuple a weighted DMS (abbreviated as wDMS).
\end{definition}
\par
A weight $\w$ is \emph{simple} if $\w\equiv1$. We write
$\surfo$ to indicate that we work with a wDMS with simple weight.
This is the case studied previously, e.g., in \cite{qiubraid16,Q3,QZ2,BQZ,KQ2},
and corresponds to the setting of principal strata of quadratic differentials
discussed in \cite{BS15}.
\par
\medskip
\paragraph{\textbf{Quadratic differentials on marked surfaces}}
\par
Fix a quadratic differential~$q$ and a let~$\theta \in S^1$
be a direction. A maximal straight arc (for the metric~$|q|$) in the
direction~$\theta$ is called \emph{trajectory} (in the direction~$\theta$).
Locally near a finite critical point of order~$w \geq -1$ there are $w+2$
distinguished directions that are limits of a trajectory in the
direction~$\theta$. Similarly, at a pole $p$ of order $|w|:=\operatorname{ord}_q(p)
\geq 3$ there are $|w|-2$ distinguished
directions  that are limits of a trajectory in the direction~$\theta$.
These directions are called \emph{prongs} at the zero or pole.
\par
The \emph{real (oriented) blow-up} of $(X,q)$ is the differentiable surface
$X^q$, which is obtained from~$X$ by replacing a pole $p\in P(q)$
of order at least~3  by a boundary circle~$\partial_p \cong S^1$.
Moreover, we mark the points on $\partial_p$ that correspond to the
distinguished tangent directions, so there are $\operatorname{ord}_q(p)-2$
marked points on $\partial_p$. This turns $X^q$ into a marked surface.
Adding the set of zeros $Z(q)$ together with their orders as weight
make $X^q$ into a wDMS, the \emph{weighted decorated real blow-up
of $(X,q)$}. Fixing moreover a diffeomorphism to a reference
surface gives a framing of~$(X,q)$.
\par
\begin{definition}\label{def:SFquad}
Fix a wDMS $\sow$. An \emph{$\sow$-framed quadratic differential}
$(X,q,\psi)$ is a Riemann surface $X$ with quadratic differential $q$,
equipped with a diffeomorphism $\psi\colon\sow \to\xx^q$,
preserving the marked points, decorations and their weights.
\par
Two $\sow$-framed quadratic differentials $(X_1,q_1,\psi_1)$ and $(X_2,q_2,
\psi_2)$ are isomorphic, if there exists a biholomorphism $f\colon X_1\to
X_2$ such that $f^*(q_2)= q_1$ and furthermore $\psi_2^{-1}\circ
f_*\circ\psi_1\in\Diff_0(\sow)$ is isotopic to the identity preserving marked
points, decorations and their weights (setwise). Here $f_*\colon(\xx_1)^{q_1}
\to(\xx_2)^{q_2}$ is the induced diffeomorphism
on real oriented blowups.
\end{definition}
\par
In flat surface literature this kind of framing is usually called
a (Teichm\"{u}ller) marking. To avoid confusion with the (prong) markings
used here, we stick to the terminology common to e.g.\ \cite{BS15}
and \cite{KQ2}, but we use \lq\lq Teichm\"{u}ller\rq\rq\ to refer to this kind of
marking without specifying the underlying wDMS.

%=========================================================
\subsection{Arc systems}  \label{sec:MS}
%=========================================================

We consider a decorated marked surfaces (wDMS)~$\sow$ with
decorations~$\Delta$, weight $\mathbf{w}:\Delta\to \Z_{\geq-1}$, and marked
points $\MM$. We let $\sow^\circ:=\sow\setminus\partial\sow$ and introduce
the following additional notation.
\begin{itemize}
\item An \emph{open arc} is an (isotopy class of a) curve $\gamma\colon I\to\sow$
such that its interior is in $\sow^\circ\setminus\Tri$ and its endpoints
are in the set of marked points~$\MM$.
\item A \emph{closed arc} is a curve $\eta\colon I\to\sow$ such that its interior
is in $\sow^\circ\setminus\Tri$ and its endpoints are in the set of
decoration points $\Tri$. (To memorize: The interval that maps to~$\sow^\circ$
is \emph{closed}.)
\end{itemize}
\par
For the simply decorated case, i.e.\ for $\w\equiv\mathbf{1}$, we %write $\surfo$ for $\sow$ and 
denote
by $\CA(\surfo)$ the set of closed arcs on~$\surfo=\surf_{\w\equiv\mathbf{1}}$ that have no
self-intersections, not even at the endpoints in $\Tri$. Similarly,
let $\OA(\surfo)$ be the set of open arcs of $\surfo$. Throughout this paper $\gamma$'s denote open arcs  and $\eta$'s
denote closed arcs, unless stated otherwise.
\par
An \emph{(open) arc system} $\AS = \{\gamma_i\}$ is a collection of open arcs
on $\sow$ such that there is no (self-)intersection between any of them
in~$\sow^\circ$. Open arc systems first appeared for triangulations of simply weighted marked
surfaces (i.e., $\w\equiv\mathbf{1}$). A \emph{triangulation~$\TT$} of $\surfo$ is a maximal arc
system of open arcs, which in fact divide $\surfo$ into triangles.
Two triangulations are related by a \emph{flip} if they only differ by one arc.
Locally, the two arcs involved in a flip are the two diagonals of a square.
\par
We now move on to the weighted version of this notion. The motivation for
the notion is provided in Section~\ref{sec:MixedAngfromQDiff},
compare also with \cite{Kr}.
\par
\begin{definition} \label{def:mixedang}
A $\w$-\emph{mixed-angulation} of $\sow$ is a
collection of open arcs that divides $\sow$ into once-decorated polygons,
such that each decoration $z$ with weight $w = w(z)$ is in a $(w+2)$-gon. We
denote this $(w+2)$-gon by $\AS(z)$ and call it an \emph{$\AS$-polygon}.
\par
The \emph{forward flip} on $\w$-{mixed-angulation} $\AS$, with respect to
an arc $\gamma\in\AS$, is an operation that moves the endpoints of $\gamma$
anti-clockwise along two adjacent sides of the $\AS$-polygons containing $\gamma$,
cf. Figure~\ref{fig:logow}.
%The inverse of a forward flip is a \emph{backward flip}, which moves the endpoints clockwise.
\end{definition}
\par

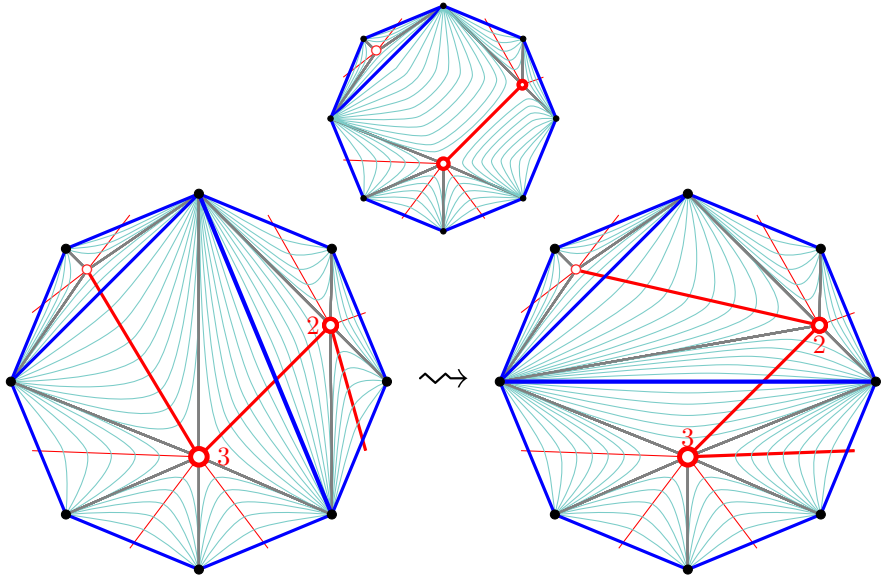
\begin{figure}[ht]\centering
	\makebox[\textwidth][c]{
		\begin{tikzpicture}[scale=.5]
			%\draw[thick](0,0) circle (5);
			\foreach \j in {0,...,7}{\draw
				(90+45*\j:5) coordinate  (w\j)
				(45+45*\j:5) coordinate  (u\j);}

			\path ($(w0)!.5!(w2)$) coordinate (v1)
			($(w0)!.5!(w5)$) coordinate (v2);
			\draw[red]($(w0)!.7!(w6)$) coordinate(q2)
			(90-45*5+45:2) coordinate(q3){}
			(90+45:4.2)  coordinate(q1);
			\foreach \j in {0,...,7}{\draw[red](22.5+45*\j:4.8) coordinate (z\j);}
			\draw[red]
			(q2)edge(z0)edge(z1)edge(z7)
			(q3)edge(z6)edge(z5)edge(z4)
			(q1)edge(z2)edge(z3);\draw[red,very thick](z7)to(q2);
			
			\foli{.2}{w1}{q1}{w2}{.6};
			\foli{.2}{w1}{q1}{w0}{.6};
			\foli{.33}{w0}{q1}{w2}{.66};
			
			\foli{.1}{w0}{q3}{w2}{.9};
			\foli{.15}{w3}{q3}{w2}{.75};
			\foli{.15}{w3}{q3}{w4}{.75};
			\foli{.15}{w4}{q3}{w5}{.75};
			\foli{.12}{w5}{q3}{w0}{.72};
			
			\foli{.15}{w0}{q2}{w7}{.75};
			\foli{.2}{w7}{q2}{w6}{.6};
			\foli{.15}{w6}{q2}{w5}{.75};
			\foli{.15}{w5}{q2}{w0}{.75};
			
			\draw[red,very thick](q1)\ww to(q3)\ww to(q2)\ww;
			\draw[red,ultra thick,fill=white](q3)circle(.23);
			\draw[red,ultra thick,fill=white](q2)circle(.17);

			\draw(6.5,0)node{\Huge{$\rightsquigarrow$}}(14,0)node{};

			\draw[red,ultra thick,fill=white](q3)circle(.21) node[right]{$\;3$};
			\draw[red,ultra thick,fill=white](q2)circle(.17) node[left]{$2$};
			\draw[blue,very thick](w0)to(w2);
			\draw[blue,ultra thick](w5)to(w0);
			\foreach \j in {0,...,7}{\draw[blue,very thick](w\j)to(u\j);}
			\foreach \j in {1,...,8}{\draw[fill=black](90+45*\j:5)circle(.12);}

			\begin{scope}[shift={(13,0)}]
				\foreach \j in {0,...,7}{\draw
					(90+45*\j:5) coordinate  (w\j)
					(45+45*\j:5) coordinate  (u\j);}
				\path ($(w0)!.5!(w2)$) coordinate (v1)
				($(w0)!.5!(w5)$) coordinate (v2);
				\draw[red]($(w0)!.7!(w6)$) coordinate(q2)
				(90-45*5+45:2) coordinate(q3){}
				(90+45:4.2)  coordinate(q1);
				\foreach \j in {0,...,7}{\draw[red](22.5+45*\j:4.8) coordinate (z\j);}
				\draw[red]
				(q2)edge(z0)edge(z1)
				(q3)edge(z6)edge(z5)edge(z4)edge(z7)
				(q1)edge(z2)edge(z3);\draw[red,very thick](z7)to(q3);
				
				\foli{.2}{w1}{q1}{w2}{.6};
				\foli{.2}{w1}{q1}{w0}{.6};
				\foli{.33}{w0}{q1}{w2}{.66};
				
				\foli{.15}{w6}{q3}{w2}{.75};
				\foli{.1}{w3}{q3}{w2}{.8};
				\foli{.15}{w3}{q3}{w4}{.75};
				\foli{.15}{w4}{q3}{w5}{.75};
				\foli{.1}{w5}{q3}{w6}{.8};
				
				\foli{.15}{w0}{q2}{w7}{.75};
				\foli{.2}{w7}{q2}{w6}{.6};
				\foli{.1}{w0}{q2}{w2}{.9};
				\foli{.15}{w2}{q2}{w6}{.75};
				
				\draw[red,very thick](q1)\ww to(q2)\ww to(q3)\ww;
				\draw[red,ultra thick,fill=white](q3)circle(.23);
				\draw[red,ultra thick,fill=white](q2)circle(.17);

				\draw[red,ultra thick,fill=white](q3)circle(.21) node[above]{$3$};
				\draw[red,ultra thick,fill=white](q2)circle(.17) node[below]{$2$};
				\draw[blue,very thick](w0)to(w2);
				\draw[blue,ultra thick](w2)to(w6);
			\foreach \j in {0,...,7}{\draw[blue,very thick](w\j)to(u\j);}
			\foreach \j in {1,...,8}{\draw[fill=black](90+45*\j:5)circle(.12);}
			\end{scope}
			
			\begin{scope}[shift={(6.5,7)},scale=.6]
				\foreach \j in {0,...,7}{\draw
					(90+45*\j:5) coordinate  (w\j)
					(45+45*\j:5) coordinate  (u\j);}
				\path ($(w0)!.5!(w2)$) coordinate (v1)
				($(w0)!.5!(w5)$) coordinate (v2);
				\draw[red]($(w0)!.7!(w6)$) coordinate(q2)
				(90-45*5+45:2) coordinate(q3){}
				(90+45:4.2)  coordinate(q1);
				\foreach \j in {0,...,7}{\draw[red](22.5+45*\j:4.8) coordinate (z\j);}
				\draw[red]
				(q2)edge(z0)edge(z1)
				(q3)edge(z5)edge(z4)edge(z6)
				(q1)edge(z2)edge(z3);
				
				\foli{.2}{w1}{q1}{w2}{.6};
				\foli{.2}{w1}{q1}{w0}{.6};
				\foli{.33}{w0}{q1}{w2}{.66};
				
				\foli{.15}{w3}{q3}{w2}{.75};
				\foli{.15}{w3}{q3}{w4}{.75};
				\foli{.15}{w4}{q3}{w5}{.75};
				
				\foli{.15}{w0}{q2}{w7}{.75};
				\foli{.2}{w7}{q2}{w6}{.6};

			    \draw[blue,very thick](w0)to(w2);				

				\draw[Emerald!50]
				($(w0)!.5!(w2)$) coordinate (w)
				\foreach \m in {.09,.18,...,.81,.88}{
					plot [smooth,tension=.4] coordinates
					{(w2) ($(w)!\m!(q3)$) ($(w)!\m!(q2)$) (w0)}
				};
				\draw[Emerald!50]
				($(w6)!.5!(w5)$) coordinate (w)
				\foreach \m in {.4,.5,...,.9}{
					plot [smooth,tension=.4] coordinates
					{(w5) ($(w)!\m!(q3)$) ($(w)!\m!(q2)$) (w6)}
				}
				\foreach \m in {.1,.2,.3}{
					plot [smooth,tension=.4] coordinates
					{(w5) ($(w)!\m!(q3)$) (w6)}
				};
				\draw[red,very thick](q1)\ww (q3)\ww to(q2)\ww;
				\draw[red,ultra thick,fill=white](q3)circle(.23);
				\draw[red,ultra thick,fill=white](q2)circle(.17);

			\foreach \j in {0,...,7}{\draw[blue,very thick](w\j)to(u\j);}
			\foreach \j in {1,...,8}{\draw[fill=black](90+45*\j:5)circle(.12);}
			\end{scope}
	\end{tikzpicture}}
	\caption{The figure shows several local horizontal strip decompositions on $\sow$ with fixed weighted decorations, depending on a quadratic differential $q$. Here
	%\begin{itemize}[nosep]
			%\item
			the black vertices are marked points on $\partial\sow$,
			%\item
			the red vertices are weighted zeros of $q$,
			%\item
			the green arcs are geodesics,
			%\item
			the black arcs are separating trajectories.
			%\item
			The blue lines define $\w$-mixed-angulations of $\sow$.
	%\end{itemize}
	The red solid arcs are simple saddle connections,
	(except for the thick one in the top small octagon, which is a saddle trajectory) and represent the duel graphs of the $\w$-mixed-angulations.
	The picture in the middle represents crossing a wall of second kind, resulting in a forward flip. %Forward flip of a mixed-angulations of $\sow$ (in blue) with its dual graph $\mathbb{A}^*$ (in red) and induced horizontal foliations (in light green)
	}
	\label{fig:logow}
\end{figure}

\par
Although the definition allows for 1-gon and 2-gons, we will
consider decorations with weight at least one only (i.e.\ $k$-gons
with $k \geq 3$) in accordance with the standing assumption from the
introduction and the one in Section~\ref{sec:collapse} below.
\par
When $\sow=\surfo$ has
simple weight $\w \equiv \mathbf{1}$, the $\w$-mixed-angulations
are (decorated) triangulations of $\surfo$. 
We recall here that a quiver $Q_\TT$ (without loops or 2-cycles) with a potential $W_\TT$ can be associated to a
triangulation $\TT$ of a simply decorated marked surface as follows:
\begin{itemize} [nosep]
	\item the vertices of $Q_\TT$ correspond to the open arcs in $\TT$;
	\item the arrows of $Q_\TT$ correspond to (anticlockwise) oriented intersection between open arcs in $T\TT$, so that there is a 3-cycle in $Q_\TT$ locally in each triangle as shown in Figure~\ref{fig:3-CYcle}.
	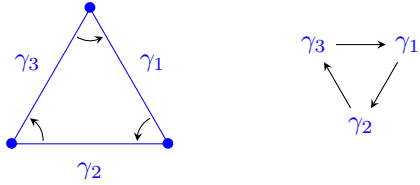
\begin{figure}\centering
		\begin{tikzpicture}[scale=1.2,>=stealth]
			\foreach \j in {1,2,3}{
				\draw[blue](90+120*\j:1)\nn to (-30+120*\j:1)\nn;
				\draw[blue](150-120*\j:.8)node{$\gamma_\j$};
				\draw[->](90+120*\j+12:.7) to[bend left=-30] (90+120*\j-12:.7);
			}
			\begin{scope}[shift={(3,.3)}]
				\foreach \j in {1,2,3}{
					\draw[blue](150-120*\j:.6)node(v\j){$\gamma_\j$};
				}
				\draw[](v1)\jiantou(v2)(v2)\jiantou(v3)(v3)\jiantou(v1);
			\end{scope}
		\end{tikzpicture}
		\caption{Local 3-cycle associated to a triangle of $\TT$}
		\label{fig:3-CYcle}
	\end{figure}
	\item the potential $W_\TT$ is the sum of all 3-cycles that locally coming from a triangle of $\TT$ as above.
\end{itemize}
The corresponding Ginzburg algebra $\Gamma(Q_\TT,W_\TT)$ will usually be
denoted  by~$\Gamma_\TT$.

% !TeX root    = stab_qdiff_collapse.tex
%%%%%%%%%%%%%%%%%%%%%%%%%%%%%%%%%%

%=========================================================
\section{Moduli spaces of quadratic differentials}
\label{sec:geoqdiff}
%=========================================================

The main goal is the proof that \lq\lq walls have ends\rq\rq\
in Proposition~\ref{prop:wallhaveends}, which is used in
Corollary~\ref{cor:pathB2} to homotope paths into the locus of so-called tame
differentials, inside the moduli space of quadratic differentials. The proof
of our main result, Theorem~\ref{thm:KrhoBihol}, relies on this
corollary. These two results are generalization of results in
\cite{BS15} where those statements are proven for differentials with
simple zeros only. Their proof relies crucially on this hypothesis in
\cite[Lemma~5.1]{BS15}. Our proof avoids most of the discussion of configuration
of hat-homologous saddle connections and uses more elaborate ways to deform
half-translation surfaces instead. We start by recalling some moduli spaces
of quadratic differentials.

%%%%%%%%%%%%%%%%%%%%%%
\subsection{Space of quadratic differentials}
\label{sec:qdiffgeneral}
%%%%%%%%%%%%%%%%%%%%%%

Following flat surface literature we let $\qmoduli[g,r+b](\w,\w^-)$ be the moduli
space of quadratic differentials~$(X,\bfz,q)$ on a pointed curve $(X,\bfz)$
where $\bfz = (z_1,\ldots,z_{r+b})$ such that~$q$ has signature $(\w,\w^-)$.
We emphasize that in this space the critical points are labeled. The unlabeled
version is denoted by $\qmoduli[g](\w,\w^-)$, i.e., without the subscript for
the number of labeled points. Since every quadratic differential is compatible
with a (unique, up to diffeomorphism) wDMS~$\sow$, which encodes both
the zeros (via weight) and the polar part of the signature (via the marking),
we also use the notation $\Quad(\sow) = \qmoduli[g](\w,\w^-)$. Next we discuss
several types of framed moduli spaces. Note that $\Quad(\sow)$ is in general
an orbifold and non-connected. The (finite) number of components is classified
in some cases in \cite{CGkdiff}.
\par
\medskip
\paragraph{\textbf{Framings by periods}} Spaces of quadratic differentials
are locally modeled on the anti-invariant eigenspace of the relative cohomology
of the canonical cover, the so-called \emph{hat-cohomology}. We fix a quadratic
differential~$q$ of signature~$(\w,\w^-)$ on a surface~$X$ and recall that $\wh{Z}$ and~$\wh{P}$ are preimages of zeroes and poles on the double cover. %decompose the
%points $\wh\bfz$ into the zeros $\wh{Z}$ and poles~$\wh{P}$. 
Then the
\emph{hat-homology group} with integral coefficients is defined as
\be
\Gamma := \wh{H}_1(q) \= H_1(\wh{X} \setminus \wh{P}, \wh{Z}, \bC)^-\,,
\ee
where the minus sign denotes the antiinvariant part of the homology with respect
to the involution~$\tau$ whose quotient map is the canonical double cover
$\pi: \wh{X} \to X$.
\par
\emph{Period coordinates}, i.e.\ integrating the one-form on
the double cover against a basis of hat-homology, give a local isomorphism
\be
\Per : U(q) \to H^1(\wh{X} \setminus \wh{P}, \wh{Z}, \bZ)^- \= \Hom(\Gamma,\bC)
\ee
on a neighborhood $U(q)$ of~$q$ in the moduli space of quadratic differentials.
Note that if all the entries of~$\w$ are odd, the hat-homology group
is unchanged if we do not consider
homology relative to the zeros. ('The principal strata of quadratic differentials
have no REL'.)
\par
To globalize the period map we fix a trivialization of the hat-homology group.
That is, we fix a reference differential $(X_0,q_0)$ and define
\bes
\qmoduli[g][\Gamma](\w,\w^-) \= \{(X,q,\rho) \in
\qmoduli[g](\w,\w^-), \quad
\rho:  \wh{H}_1(q_0) \overset{\cong}{\longrightarrow} \wh{H}_1(q) \}\,,
\ees
the space of \emph{period-framed quadratic differentials} of
signature~$(\w,\w^-)$.
\par
\medskip
\paragraph{\textbf{(Teichm\"uller) framed quadratic differentials}}
For fixed discrete data $(g,b,\w^-,\w)$ we denote by $\FQuad(\sow)$ the moduli space
of $\sow$-framed quadratic differentials. This moduli space is a manifold,
but non-connected. We denote by $\FQuad^\circ(\sow)$\footnote{The $\circ$ should
remind of the symbol for the connected component of the identity in a topological
group.} a connected component, in applications typically singled out to contain
a given $\sow$-framed differential.
\par
These spaces are strata of a vector bundle.
The top dimensional stratum of this vector
bundle is $\FQuad(\surf_{\Tri})$ with simple weighted decorations.
\par
\medskip\paragraph{\textbf{Mapping class group action}}
In our context two mapping class groups are important. In general,
the \emph{(unpunctured) mapping class group of a marked surface~$\surf$} is the group
$\MCG(\surf)$ of isotopy classes of diffeomorphisms~of $\surf$ relative to
the boundary and marked points. Similarly we define the \emph{full
mapping class group} $\MCG(\sow) = \MCG(\w,\w^-)$ as diffeomorphisms with
the additional condition to respect finite critical points and their weight (set-wise).
\par
The mapping class group acts on the set of all Teichm\"uller framings
by precomposition. Obviously $\Quad(\sow)=\FQuad(\sow)/\MCG(\sow)$
as orbifolds.
\par

%%%%%%%%%%%%%%%%%%%%%%
\subsection{Walls have ends and homotopies to tame paths}
\label{sec:wallsends}
%%%%%%%%%%%%%%%%%%%%%%

In our case, just as for the GMN-differentials
treated  in \cite[Section~5]{BS15}, the space $\Quad(\sow)$ has
a stratification by the number of horizontal saddle connections. The difference
is that the number of horizontal trajectories emerging from a zero is not three,
but~$w_i+2$ if the zero is of order~$w_i$. This means that the number $s_q$ of
saddle trajectories, the $r_q$ recurrent trajectories
and the number $t_q$ of separating trajectories satisfy
\be
k \,:= \, r_q + 2s_q + t_q \= \sum_{i=1}^{r_1} (w_i+2)\,.
\ee
\par
\paragraph{\textbf{Stratification}}	We define
\be
B_p \,\coloneqq\, B_p(\sow) \= \{q \in  \Quad(\sow): r_q + 2s_q \leq p\}
\ee
and observe that $B_0 = B_1$ is the set of saddle-free differentials by the preceding
observation. There is an increasing chain of subspaces
\be \label{eq:B_i_stratification}
B_0 = B_1 \subset B_2 \subset \cdots \subset B_k \= \Quad(\sow)\,.
\ee
This follows from the lower semicontinuity of the function~$t_q$ on $X$. The space~$B_2$
is called the space of \emph{tame differentials}. We define the stratification
\be
F_p\,\coloneqq\, F_p(\sow) \= B_p \setminus B_{p-1}.
\ee
We observe that $F_0$ is dense, $F_1$ is empty, and $F_2$ consists of differentials
with exactly one saddle trajectory, since the boundary of a spiral domain has
a saddle trajectory (\cite[Lemma~3.1]{BS15}). In fact, we have the more precise
statement from \cite[Lemma~4.11]{BS15} and \cite[Theorem~1.4]{A}.
\par
\begin{lemma} \label{le:B0dense}
	$B_0(\sow)$ is dense in $\FQuad(\sow)$.
	In fact, $\FQuad(\sow)=\bC\cdot B_0(\sow)$.
\end{lemma}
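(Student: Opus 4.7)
The plan is to establish the stronger statement $\FQuad(\sow) = \bC \cdot B_0(\sow)$ first, since density of $B_0$ then follows immediately: for any $q \in \FQuad(\sow)$ one may find $\theta_n \to 0$ with $e^{i\theta_n} q \in B_0$, so $q$ is a limit of points in $B_0$.

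The key observation is that saddle-freeness is a condition on the horizontal direction alone, and the $\bC$-action rotates this direction. More precisely, I would argue as follows. Fix $q \in \FQuad(\sow)$. A saddle connection on $(X,q)$ is a geodesic segment in the flat metric $|q|$ joining two (not necessarily distinct) zeros, and its direction is a well-defined element of $S^1/\{\pm 1\}$. Since the set of saddle connections of a half-translation surface is countable (their lengths form a discrete subset of $\bR_{>0}$, and for each length there are only finitely many), the set
\[
\Theta(q) \= \{\theta \in S^1 : (X, e^{2i\theta} q) \text{ has a horizontal saddle trajectory}\}
\]
is countable. For $\theta \notin \Theta(q)$, the differential $e^{2i\theta} q$ has no saddle trajectories, and then by \cite[Lemma~3.1]{BS15} (recalled in Section~\ref{sec:trajstruc}) it automatically has no closed horizontal trajectories and no recurrent horizontal trajectories either, since boundaries of cylinders and of spiral domains must contain saddle trajectories. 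Hence $e^{2i\theta} q \in B_0(\sow)$.

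Since $S^1 \setminus \Theta(q)$ is dense in $S^1$, for each $q$ there exist arbitrarily small $\theta$ with $e^{2i\theta} q \in B_0(\sow)$, which simultaneously proves the $\bC$-orbit statement and the density. Since the $\sow$-framing is preserved (the real-oriented blow-up and the prescribed diffeomorphism to $\sow$ are unchanged under rotating the differential, only the horizontal foliation rotates), this argument takes place entirely within $\FQuad(\sow)$ rather than the unframed quotient. There is no real obstacle to this argument beyond invoking the countability of the set of saddle connections and the cited structural result, both of which are standard and do not depend on the zeros being simple.
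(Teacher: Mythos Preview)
Your argument is essentially correct and is the standard one; the paper itself does not give a proof but simply cites \cite[Lemma~4.11]{BS15} and \cite[Theorem~1.4]{A}, whose content is precisely what you have written out.

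Two small points worth tightening. First, your parenthetical justification for countability of saddle connections (``their lengths form a discrete subset of $\bR_{>0}$'') is not the right reason in general; the cleaner argument is that saddle connections are geodesics between finitely many points, with at most one per relative homotopy class, and homotopy classes are countable. Second, the claim that the framing is ``unchanged'' under rotation is not quite accurate: the marked points on the boundary of $X^q$ (the prongs at the poles) do rotate when $q$ is multiplied by $e^{i\theta}$. What makes the $\bC$-action on $\FQuad(\sow)$ well-defined is that the framing diffeomorphism can be continuously deformed along with the rotation, so the isotopy class of the framing varies continuously. Neither of these affects the validity of your conclusion.
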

\par
We will see that $B_p$ is not
always locally finite, and even if it is, the relation between the integer~$p$ and the codimension
of~$B_p$ is complicated and depends on~$s_q$ and the geometry of the spiral domains.
\par
We can now state and prove our goal, the generalization of \cite[Proposition~5.8]{BS15}
to zeros of arbitrary order. It follows as a corollary of the following
Proposition \ref{prop:wallhaveends} which is proven in the sequel.
\par
\begin{prop}\label{prop:wallhaveends}
	Suppose that $p>2$ and suppose that the negative part of the signature is not
	$\w^- = (-2)$. Then each component of the stratum~$F_p$ contains a point~$q$
	and a neighborhood $U \subset \Quad_{g} (\w,\w^-)$ of~$q$ such that
	$U \cap B_{p}$ is contained in the locus $\Per(\alpha) \in \bR$ for some
	$\alpha \in \Gamma$, and that this containment is strict in the more precise
	sense that $U \cap B_{p-1}$ is connected.
\end{prop}
\par
The locus $\Per(\alpha)\in \bR$ appearing in the first property is the wall
(i.e., a real codimension one locus) the subsection title
alludes to, and the second statement guarantees the end of this wall.
We will apply this proposition in the following form:
\par
\begin{cor}[{\cite[Proposition~5.8]{BS15}}] \label{cor:pathB2}
	Suppose that the negative part of the signature is not $\w^- = (-2)$.
	Then any path in $\Quad_{g} (\w,\w^-)$ can be homotoped relative to its
	end points to a path in $B_2$.
\end{cor}
\par
\begin{proof}[Sketch of proof] Suppose the path lies in~$B_p$. We
	inductively reduce~$p$ by first perturbing it so that it intersects~$F_p$
	in only finitely many points. For each of them, drag the path along the
	nearby~$B_{p-1}$ to an end of the wall given by
	Proposition~\ref{prop:wallhaveends}, go around and return to the other
	side of the intersection point with the wall.
\end{proof}
\par
\begin{lemma} \label{le:ringshrinking} Let~$R$ be a ring domain in a
surface $(X,q)$ that belongs to a stratum $F_p \subset \Quad_{g} (\w,\w^-)$
and let~$X^c = X \setminus \ol{R}$ be the complement of the closed ring domain.
Then there exists a path~$\alpha: [0,1] \to F_p$ such that $\alpha(0) = (X,q)$,
such that~$X^c$ is unchanged along~$\alpha$ and such that $\alpha(1) = \ol{X^c}$
is the closure of the ring domain complement.
\end{lemma}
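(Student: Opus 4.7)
The plan is to use the explicit flat model of a ring domain and shrink its height while keeping the complement $X^c$ intact, so that the number $r_q+2s_q$ of horizontal critical trajectories is preserved throughout the deformation.

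First, I would record the standard flat normal form: a non-degenerate ring domain $R$ of circumference $c_0$ and height $h_0$ is isometric in the $|q|$-metric to $\{z\in\bC: 0<\Im(z)<h_0\}/(z\sim z+c_0)$ with $q|_R = dz^{\otimes 2}$, and its boundary $\partial R$ is a union of saddle connections glued isometrically to $\ol{X^c}$. A degenerate ring domain (one boundary at a double pole) admits an analogous description with the double pole playing the role of a boundary circle of vanishing combinatorial content. The key point is that the geometry of $R$ depends on only two real parameters $(c_0, h_0)$, and that~$c_0$ is pinned down by the circumferences of the boundary saddle trajectories inherited from $\ol{X^c}$.

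Second, for each $t\in [0,1]$ I would define $\alpha(t)$ by excising $R$ from $X$ and regluing, along the same boundary isometries, a new cylinder $R_t$ of circumference $c_0$ and height $(1-t)h_0$. For $t\in[0,1)$ this yields a genuine closed Riemann surface with quadratic differential in $\FQuad(\sow)$, and at $t=1$ one obtains $\ol{X^c}$ with $R$ collapsed to $\partial R$ (or to its single boundary circle, in the degenerate case). In period coordinates the family $\alpha|_{[0,1)}$ is just the linear variation of a single purely imaginary relative period, $\i h_0 \mapsto \i(1-t)h_0$, with all other periods held fixed; hence continuity of $\alpha$ on $[0,1)$ is immediate.

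Third, I would verify that $\alpha(t)\in F_p$ for every $t$. Since $X^c$ is unaffected, every horizontal critical trajectory of $q$ supported in $X^c$ — saddle trajectories, separating trajectories, and recurrent trajectories alike — persists unchanged. Horizontal trajectories in the interior of $R_t$ are closed and contribute neither to $s_{q_t}$ nor to $r_{q_t}$, while the saddle trajectories on $\partial R_t$ match those on $\partial R$ bijectively through the regluing. Hence $r_{q_t}+2s_{q_t}=r_q+2s_q=p$, placing $\alpha(t)$ in $F_p$.

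The main obstacle will be making precise the claim $\alpha(1)=\ol{X^c}$ and the continuity of $\alpha$ at $t=1$. In the degenerate-ring case no pinching occurs and $\alpha(1)$ is literally $\ol{X^c}$ with a neighborhood of the double pole removed. In the non-degenerate case the limit involves identifying the two boundary circles of $\ol{X^c}$ to a node, a Deligne--Mumford-type degeneration; continuity must then be understood inside a suitable compactification of $\Quad(\sow)$ such as the one of \cite{LMS,kdiff}, or, equivalently, via the plumbing coordinate in which the vanishing imaginary period $\i(1-t)h_0$ encodes the opening of the node. Once this is formalized, the statement is proved.
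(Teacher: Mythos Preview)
Your argument has a genuine gap at the endpoint $t=1$. The path $\alpha$ is required to land in $F_p \subset \Quad_g(\w,\w^-)$ for \emph{every} $t\in[0,1]$, including $t=1$; so $\alpha(1)$ must be a smooth surface with a quadratic differential of the \emph{same} signature, not a nodal degeneration. Shrinking the height of a cylinder of fixed circumference $c_0$ to zero does not pinch a curve to a point; it simply identifies the two boundary circles of $\ol{R}$ along an isometry. No Deligne--Mumford compactification is involved, and invoking the machinery of \cite{LMS,kdiff} is beside the point here.

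Once you see this, a second gap appears: if you shrink the height without any preparation, zeros sitting on the top boundary of $R$ may land exactly on zeros of the bottom boundary, producing a higher-order zero and leaving the stratum $\Quad_g(\w,\w^-)$ (and certainly $F_p$). The paper prevents this by first performing a horizontal twist of $R$ (the upper-triangular action on the cylinder, keeping $X^c$ fixed) so that there is no vertical saddle connection inside $\ol{R}$ joining zeros on $\partial R$. After this generically chosen twist, the vertical separatrices entering from $\partial R$ divide the cylinder into rectangles, and one checks (following \cite{AWhighrank,MWboundary}) that shrinking the height glues the tops and bottoms of these rectangles, yielding a genuine smooth limit in $\Quad_g(\w,\w^-)$. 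Your second and third steps are fine for $t\in[0,1)$; what is missing is precisely this twist-then-shrink construction that makes the limit well defined inside the stratum.
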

\par
\begin{proof} Let~$I$ be the intersection $\ol{X}^c \cap \ol{R}$ of the ring
domain with the rest of the surface.
Let $\beta$ be a saddle connection crossing the ring domain
once. Consider horizontal twists of the cylinder~$R$, i.e.\ the action of the
upper triangular group on~$R$ while not changing~$X^c$. This changes the
period of~$\beta$
by some real number while keeping the lengths of all saddle trajectories fixed.
We choose this twist so that there is no vertical saddle connection emanating
from a zero on $\partial R$ that stays within~$\ol{R}$. (The set of twists where
such a vertical saddle connection does exist is countable.)
This is the first part of the path~$\alpha$.
\par
Now we shrink the height of the cylinder, i.e., the imaginary part of the
period of~$\beta$
to zero. We claim that we stay in $\Quad_{g} (\w,\w^-)$ during this
process. This is proven in detail in \cite[Section~4.3]{AWhighrank} and
sketched in \cite[Section~3.1]{MWboundary}. The idea is to draw the vertical
separatricies in the cylinder until they leave the cylinder. This has to
happen, since otherwise we'd have a vertical spiral domain, the boundary of
which has vertical saddle connections, but we excluded these. These vertical
lines divide the cylinder into rectangles. In the limiting surface at
$\Im(\beta)=0$ the top and bottom of each of these rectangles (considered inside
the surface $\ol{X}^c$ slit open along~$I$) are glued together.
\par
To see that this path stays in $F_p$ note that all union of the rays emanating
into~$I$ and on the boundary of~$R$ are saddle trajectories for each surface
along the path~$p$ just described, including its end points. Since the set of this
rays is constant along~$p$ and since $X^c$ is unchanged along the path, the claim
follows.
\end{proof}
\par
We can also get rid of spiral domains by small perturbation in a fixed stratum.
Recall from \cite[Section~11.2]{Strebel} that the boundary of a spiral domain consists
of saddle trajectories.
\par
\begin{lemma} \label{le:spiralperturbation}
	Let~$S$ be a spiral domain in a surface $(X,q)$ that belongs to a stratum
	$F_p \subset \Quad_{g} (\w,\w^-)$ and let~$X^c = X \setminus \ol{S}$ be the
	complement of the closure of the spiral domain.
	Then there exists a path~$\alpha: [0,1] \to F_p$ such that $\alpha(0) = (X,q)$,
	such that $X^c$ is unchanged along~$\alpha$ and such that $\alpha(1) \setminus
	X^c$ contains a ring domain.
\end{lemma}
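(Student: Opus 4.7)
The plan is to perturb $(X,q)$ within the subspace of differentials isometric to $(X,q)$ on $X^c$, using density of periodic interval exchange transformations to nucleate a horizontal cylinder inside the spiral domain $S$. First, the saddle trajectories comprising $\partial S$ are determined by the flat structure in a neighborhood of $X^c$, so the subset $V \subset \Quad(\sow)$ of differentials isometric to $(X,q)$ on $X^c$ (up to and including $\partial S$) is locally a real-analytic subvariety of $\Quad(\sow)$. In period coordinates, $V$ is parametrized by the relative hat-periods of cycles supported in $\overline{S}$ that vanish on the boundary cycles of $\partial S$; this parameter space has positive dimension because $S$ has positive area and non-trivial topology.

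Next I would produce the horizontal cylinder. Choose a short vertical transversal $\tau \subset S$ to the (minimal) horizontal foliation on $S$. The first-return map $T_\tau$ is an interval exchange transformation whose widths are linear functions of the period coordinates parametrizing $V$. Rationality of these widths is a dense condition, and any periodic (equivalently, rational-width) IET yields horizontal cylinders in the surface. Hence arbitrarily close to $q$ in $V$ there exists $q'$ whose horizontal foliation on $S$ contains at least one closed trajectory, producing a ring domain $C \subset \overline{S}$.

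To conclude I would verify that the straight-line path $\alpha$ from $q$ to $q'$ inside $V$ stays in $F_p$. Because spiral domains contain no poles, every prong of a zero that is a separating trajectory under $q$ remains a separating trajectory under every $q_s := \alpha(s) \in V$, since its target pole lies in $X^c$ which is pointwise fixed along $\alpha$; hence $t_{q_s} = t_q$ for all $s$. The prong conservation $r+2s+t = \sum_i (w_i+2)$, which is a topological invariant of the signature, then forces $r_{q_s}+2s_{q_s} = r_q+2s_q = p$ throughout $\alpha$. Combined with step two, this yields the required path, with $\alpha(1)\setminus X^c \supset C$ a ring domain.

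The main obstacle is the density step in the second paragraph: establishing rigorously that periodic IETs form a dense subset of those widths realized by period coordinates in $V$, rather than appealing to classical density within the full IET simplex. This reduces to showing that the linear map from the period parameter space of $V$ to the space of IET widths has full-dimensional image, which follows from the non-trivial topology of $\overline{S}$ but must be checked with care when several zeros of higher order lie on $\partial S$, since higher-order zeros introduce relations among the prong directions that could a priori constrain the image to a proper subspace.
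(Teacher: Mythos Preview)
Your approach differs from the paper's in a key way, and the density step you flag is indeed where it breaks down. You try to produce a ring domain in one shot by perturbing to a surface whose first-return IET on a transversal in $S$ is periodic. The obstacle you identify is genuine and not easily repaired: the image of the period parameters of $V$ in the IET width simplex can have low dimension --- for instance, if there is a single interior zero then $\dim_\bC V = 1$ regardless of how complicated $\ol{S}$ is topologically --- and the periodic locus, while dense in the full simplex, need not meet an arbitrary low-dimensional affine slice densely. Your proposed reduction to ``full-dimensional image'' is therefore too strong a hypothesis to hope for, and without it the density of periodic IETs within $V$ is unavailable.

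The paper sidesteps this by not trying to produce a ring domain directly. It first observes (via Strebel's rectangle decomposition of the spiral domain) that $\ol{S}$ contains an interior zero, hence a non-horizontal saddle connection $\beta$, and that an arbitrarily small purely imaginary perturbation of $\Per(\beta)$ creates a new horizontal saddle trajectory inside $\ol{S}$. This is only a codimension-one condition (a single connection in the IET) rather than a periodicity condition, and it \emph{is} dense along the one-parameter family obtained by sliding the interior zero vertically, since that motion is transverse to each individual connection hyperplane. One then iterates: if there is still a spiral domain in the complement of $X^c$, repeat. Each step increases $s_q$ and decreases $r_q$ while $r_q + 2s_q = p$ remains fixed; after finitely many steps $r_q = 0$, so the complement of $X^c$ contains no spiral domain, and since it also contains no pole it must contain a ring domain.

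Your prong-counting argument for remaining in $F_p$ along the deformation (separating trajectories all live in $X^c$ and are therefore frozen) is correct and is essentially the same argument the paper uses.
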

\par
\begin{proof} Since $S$ is a spiral domain there is at least one saddle connection
	$\beta$ starting in the interior of the spiral domain~$\ol{S}$ with $\Per(\beta)
	\not\in \bR$, say oriented to have positive imaginary part. To show this
	we can e.g.\ use the decomposition of the spiral domain into rectangles
	from \cite[Section~11.3]{Strebel}: if there was no zero in the interior, this
	decomposition would exhibit the spiral domain actually as a ring domain.
	An arbitrarily small purely imaginary deformation  of~$\beta$ will create a saddle
	trajectory that intersects $\ol{X}^c$ at most at its end points. Since we
	may make the deformation smaller than the shortest saddle connections, no two points
	have collided and we stay in the space $\Quad_{g} (\w,\w^-)$. If after
	this deformation the complement of $X^c$ does not yet contain a ring domain
	it must contain spiral domains and we can repeat the procedure, creating
	a new saddle trajectory at each step. The process has to terminate once $p=2s_q$
	and then $r_q=0$,  i.e.\ the complement of $X^c$ must contain a ring domain.
	\par
	We argue that we stay in $F_p$ along this process. This follows since
	$X^c$ is unchanged in the whole process, and since all
	horizontal trajectories emanating from a zero into the complement of $X^c$ contribute
	to $r_q+2s_q$ at any stage of the process.
\end{proof}
\par
\medskip
\paragraph{\textbf{Proof of Proposition~\ref{prop:wallhaveends}}} The beginning of the
following proof follows \cite[Proposition~5.8]{BS15}, replacing an argument
using generic (in the sense of loc.\ cit.) differentials by an alternative
argument. The second part is based on our version of the surface perturbations.
\par
First recall the following Lemma due to \cite{BS15}, that provides the end of the
wall, if the~$\eta_i$ are independent in hat-homology so that their periods can
be modified independently.
\par
\begin{lemma}[{\cite[Proposition~5.3]{BS15}}] \label{le:nbhdborderline}
Suppose that~$q_0$ has a half-plane or a horizontal strip
bounded by exactly $s$ saddle trajectories~$\gamma_i$, numbered consecutively.
Let $\alpha = \sum_{i=1}^s \gamma_i$. Then there is an open neighborhood~$U$ of~$q_0$
such that
\bes
\text{if }q \in U \cap F_p \quad \text{then} \quad \Per(\alpha) \in \bR\,.
\ees
Moreover, $q \in U \cap F_p$ implies that
\be \label{eq:imconstraint}
\Im\Bigl(\sum_{i=1}^k \Per(\gamma_i)\Bigr) \leq 0
\ee
for all $0<k<s$, if the surfaces is oriented such that a half-plane or a
horizontal strip is above the real axis.
\end{lemma}
\par
\begin{proof}[Proof of Proposition~\ref{prop:wallhaveends}]
Consider any point $q \in F_p$ with $p>2$. In this situation there is a borderline
saddle connection. Hence for a sufficiently small neighborhood~$U$ we have
\be \label{eq:UcapFp}
U \cap F_p \, \subseteq \, \{q : \Per(\alpha) \in \bR\}
\ee
for some $\alpha\in\Gamma$ by Lemma~\ref{le:nbhdborderline} and the analogous
\cite[Lemma~5.4]{BS15} for the boundary of a degenerate ring-domain. A
neighborhood~$U$ satisfying the first property is thus available for every~$q$.
\par
Suppose that~$q$ has only one saddle trajectory. Then, since $p>2$, there must
exist a spiral domain and $X^-$ must be non-empty. The intersection $X^+ \cap X^-$
thus consists of one saddle trajectory only. This saddle trajectory has to be
the boundary of a degenerate ring domain, and since any component of~$X^+$ contains
an infinite critical point and a saddle trajectory on its boundary, there is
only one double pole, contradicting the hypothesis.
\par
Consequently, we may assume that there are at least two saddle trajectories.
More precisely, we may assume that $\alpha$ is the class of a union of saddle
trajectories
on the boundary of one component of~$X^+$ and that either there is another
component of~$X^+$ with boundary class $\alpha'$, or that $s \geq 2$ in
Lemma~\ref{le:nbhdborderline}.
\par
If the inclusion in~\eqref{eq:UcapFp} is strict, we are done. This happens
if $s \geq 2$ by~\eqref{eq:imconstraint}, if moreover not all~$\gamma_i$ in this lemma
are hat-proportional and so two of them can be moved independently.
This also happens if $\alpha'$ and $\alpha$ are not hat-proportional (by
tilting~$\alpha'$), or equivalently if they are hat-homologous.
\par
We thus need to analyze the situation that~$q$ has two or more borderline
saddle trajectories $\gamma_1, \gamma_2, \ldots$ and all the borderline saddle
trajectories are  hat-proportional. If a single $\gamma_i$ or a union of these
separates off a subsurface~$X_0$ contained in $X^-$, i.e.\ without poles,
then we are done by the following \emph{subsurface argument}: As long as the
subsurface contains spiral domains we apply Lemma~\ref{le:ringshrinking}, creating
a new cylinder each time. Since the number of horizontal cylinders is bounded by
the topology, this procedure terminates. Now we apply successively
Lemma~\ref{le:spiralperturbation} to each of these cylinders. Note that a
saddle connection crossing a cylinder cannot be hat-homologous to~$\gamma_i$.
Consequently we arrive after the ring domain shrinking process at a point where
we conclude by Lemma~\ref{le:nbhdborderline}.
\par
In general there are three cases depending on the position of the first two, say,
of these trajectories~$\gamma_i$.
\par
\textbf{Case 1:} Suppose both of them are closed. If there is a path starting and ending
at a pole crossing one~$\gamma_i$ but not the other, then the two are not
hat-proportional,
since the Lefschetz pairing (see~\cite[Theorem 6.2.17]{spanier})
\bes
H_1(\wh{X} \setminus \wh{P},\wh{Z}, \bZ)  \times H_1(\wh{X} \setminus \wh{Z},
\wh{P},\bZ)  \to \bZ
\ees
is non-degenerate. The only case not yet covered by the subsurface argument
is that $\gamma_1$ and $\gamma_2$ jointly cut~$X$ into two components, one
of which has no higher order poles, i.e.\ belongs to~$X^-$. We conclude again by the
subsurface argument applied to the component without higher order poles.
\par
\textbf{Case 2:} Suppose none of them is closed. If $\gamma_1 \cup \gamma_2$
does not separate the surface, take a path joining a pole to itself, crossing
$\gamma_1$ once, but not~$\gamma_2$. Take one of the lifts of this path to the
canonical cover and use that Lefschetz pairing to obtain a contradiction to
$[\wh{\gamma}_1] = [\wh{\gamma}_2]$ in hat-homology. If there are poles on both
sides of this loop, the same Lefschetz pairing argument applies. It remains
to deal with the case that $\gamma$ splits off a subsurface in~$X^-$, which
is being dealt with by the subsurface argument.
\par
\textbf{Case 3:} Suppose that precisely one of them, say~$\gamma_1$, is closed.
As in Case~1, if $\gamma_1$ separates off a surface without poles we conclude
by the subsurface argument. Otherwise there is a path starting and ending at
a pole, crossing~$\gamma_1$ once and not crossing~$\gamma_2$. The lift of this
path to~$\wh{X}$ and the Lefschetz pairing invalidates that $\gamma_1$ and
$\gamma_2$ are hat-proportional in $H_1(\wh{X} \setminus \wh{P},\bZ)$.
\end{proof}
\par
\medskip
\paragraph{\textbf{The locus $B_2$ is not locally connected}} 
%For comparison with the $A_n$-case w
We show that in general the homotopy to a tame path can
not be performed locally. Consider an elliptic curve whose horizontal leaves
are dense. Make a slit and glue the two sides of the slit (one after rotation
by~$\pi$) to adjacent saddle trajectories on the top of a half-plane.
This results in a surface in $\qmoduli[1](2,1,-3)$, consisting of a spiral
domain and the half plane. The two slit segments $\gamma_1,\gamma_2$ are
hat-homologous. This type of surfaces belongs to~$F_6$, and $B_6$ has locally
$\bR$-codimension one, cut out by $\Per(\gamma_1) \in \bR$.

%%%%%%%%%%%%%%%%%%%%%%%%%%%%%%
\subsection{Mixed-angulations from quadratic differentials} \label{sec:MixedAngfromQDiff}
%%%%%%%%%%%%%%%%%%%%%%%%%%%%%%

This section gives the geometric justification for introducing $\w$-mixed
angulations by studying quadratic differentials with higher order poles, and shows that adjacency of chambers of
saddle-free quadratic differentials is encoded by flips of
mixed-angulations.
%We use the notation from Section~\ref{app:geoqdiff}.
\par
\begin{definition} \label{def:ASfromq}
Let $(X,q, \psi\colon\sow\to X^q)$ be an $\sow$-framed quadratic differential
which is saddle-free. Then there is a $\w$-mixed-angulation $\AS_q$ on~$\sow$
\emph{induced from~$q$} (or more precisely from $(q,\psi)$) where the open
arcs are inherited from (isotopy classes of) generic trajectories.
\end{definition}
\par
The dual graph $\AS_q^*$ also has a geometric interpretation. Its arcs represent
the saddle connections crossing once each horizontal strip. It can be enhanced
with a ribbon-graph structure and as such carries the information about~$\w$.
We refer to~$\AS_q^*$ as the \emph{$\w$-ribbon graph induced by~$q$}. The
trajectory structure on $\sow$ induced by a quadratic differential, hence
the local picture of $\AS_q$ and its dual are illustrated together with the
effect of a forward flip in Figure~\ref{fig:logow}.
\par
Definition~\ref{def:ASfromq} implies that each component of
the locus $B_0 \subset \FQuad(\w,\w^-)$ of saddle-free differentials gives
the same mixed-angulation. We next highlight the role of the locus
of tame differentials:
\par
\begin{prop} \label{prop:EGB2}
Two components of $B_0$ can be connected by an arc~in~$B_2$ with only one point
non-saddle-free if and only if the corresponding $\w$-mixed-angulations are
related by a forward flip.
\end{prop}
\par
\begin{proof} Suppose that the two components of~$B_0$ are connected by such
an arc, which we may homotope to be a small rotation of a saddle connection
near the real axis while fixing the geometry of the rest of the surface. The
question is thus local, in the neighborhood of this saddle connection.
Using a metrically correct drawing, as in the middle of Figure~\ref{cap:flip}
one checks that rotating in clockwise (anticlockwise) direction has the effect
of passing from the leftmost to the rightmost picture in terms of horizontal
strip decompositions. Picking a generic trajectory from the strips, we observe
that this changes the mixed-angulation by a forward flip (backward flip).
\begin{figure}[ht]
	\begin{tikzpicture}[scale=1.2,cap=round,>=latex]
		% draw the unit circle
		\foreach \x in {0,36,...,360} {
			\draw[blue] (\x:1cm) -- (\x +36:1cm);
		}
		
		\foreach \x in {0,36,...,360} {
			\filldraw[blue] (\x:1cm) circle(1pt);
		}
		
		\draw (330:0.4cm)  -- (36:1cm);
		\draw (330:0.4cm) -- (72:1cm);
		\draw (190:0.0cm) -- (72:1cm);
		\draw (190:0.0cm) -- (108:1cm);
		\draw (190:0.0cm) -- (144:1cm);
		\draw (190:0.0cm) -- (180:1cm);
		\draw (190:0.0cm) -- (216:1cm);
		\draw (190:0.0cm) -- (252:1cm);
		\draw (190:0.0cm) -- (288:1cm);
		\draw (330:0.4cm) -- (288:1cm);
		\draw (330:0.4cm) -- (324:1cm);
		\draw (330:0.4cm) -- (0:1cm);
		
		\node at (85:0.5cm) {\tiny a};
		\node at (42:0.6cm) {\tiny b};
		\node at (278:0.6cm) {\tiny c};
		\node at (310:0.7cm) {\tiny d};
		\node at (244:0.6cm) {\tiny e};
		\node at (14:0.7cm) {\tiny f};
		
		%\draw (108:1cm) -- (216:1cm);
		%\draw (36:1cm) -- (324:1cm);
		
		%\draw (108:1cm) -- (288:1cm);
		%\draw (0,0) node[fill=white] {$e$};
		
		\draw[red] (330:0.4cm)  -- (190:0.0cm);
		
		\draw[red,thick,fill=white](190:0.0cm)circle(00.05);
		\draw[red,thick,fill=white](330:0.4cm)circle(00.05);

	\end{tikzpicture}
	\hspace{15pt}
	\begin{tikzpicture}[scale=1.2,cap=round,>=latex]
		
		%halfdisc
		\begin{scope}
			\clip (-1,-1) rectangle (0,1);
			\draw[thick] (0,0) circle(1);
		\end{scope}
		
		%saddle connections
		\draw[red] (0,0)  -- (0.5,-0.3);
		\draw[red, thick] (0,0)  -- (-0.5,-0.3);
		
		%undotted lines e,c,d
		\draw (0,-1) -- (0,0);
		\draw[thick] (0,0) -- (0,1);
		\draw[thick] (0.5,-1) -- (0.5,-0.3);
		\draw[thick] (0,-1) -- (0.5,-1);
		
		%dashed line g,d
		\draw[dashed] (-0.5,-0.86) -- (-0.5,0.86);
		
		%label for e,c,d
		\node at (-0.1,0.5) {\tiny e};
		\node at (-0.1,-0.6) {\tiny c};
		\node at (0.6,-0.625) {\tiny d};
		
		%label for d (new),g
		\node at (-0.6,-0.625) {\tiny d};
		\node at (-0.6,0.3) {\tiny g};
		
		%zeroes
		\draw[red,thick,fill=white](0,0)circle(00.05);
		\draw[red,thick,fill=white](0.5,-0.3)circle(00.05);
		\draw[red,thick,fill=white](-0.5,-0.3)circle(00.05);
		
		%rotation arrow
		\draw [-stealth, dotted] (0.5,-0.3) to [out=225,in=315] (-0.5,-0.3);
		
	\end{tikzpicture}
	\begin{tikzpicture}[scale=1.2,cap=round,>=latex]
		
		%halfdisc
		\begin{scope}
			\clip (1,1) rectangle (0,-1);
			\draw[thick] (0,0) circle(1);
		\end{scope}
		
		%saddle connections
		\draw[red] (0,0)  -- (-0.5,+0.3);
		\draw[red, thick] (-0.5,+0.3)  -- (-1,0);
		
		%undotted lines a,b,f
		\draw[thick] (0,0) -- (0,-1);
		\draw (0,1) -- (0,0);
		\draw[thick] (-0.5,1) -- (-0.5,0.3);
		\draw[thick] (0,1) -- (-0.5,1);
		
		%dashed lines f(new),h
		\draw[dashed] (-1,0) -- (-1,-1);
		\draw[dashed] (0,-1) -- (-1,-1);
		\draw[dashed] (-0.5,+0.3) -- (-0.5,-1);
		
		%label for f,b,a
		\node at (0.1,-0.5) {\tiny f};
		\node at (0.1,0.5) {\tiny b};
		\node at (-0.6,0.625) {\tiny a};
		
		%label for f(new), h
		\node at (-1.1,-0.5) {\tiny f};
		\node at (-0.6,-0.35) {\tiny h};
		
		%zeroes
		\draw[red,thick,fill=white](0,0)circle(00.05);
		\draw[red,thick,fill=white](-0.5,+0.3)circle(00.05);
		\draw[red,thick,fill=white](-1,0)circle(00.05);
		
		%rotation arrow
		\draw [-stealth, dotted] (0,0) to [out=225,in=315] (-1,0);

	\end{tikzpicture}
	\hspace{15pt}
	\begin{tikzpicture}[scale=1.2,cap=round,>=latex]
		% draw the unit circle
		\foreach \x in {0,36,...,360} {
			\draw[blue] (\x:1cm) -- (\x +36:1cm);
		}
		
		\foreach \x in {0,36,...,360} {
			\filldraw[blue] (\x:1cm) circle(1pt);
		}
		
		\draw[red, thick] (275:0.4cm)  -- (190:0.0cm);

		\draw (275:0.4cm)  -- (36:1cm);
		\draw (190:0.0cm) -- (36:1cm);
		\draw (190:0.0cm) -- (72:1cm);
		\draw (190:0.0cm) -- (108:1cm);
		\draw (190:0.0cm) -- (144:1cm);
		\draw (190:0.0cm) -- (180:1cm);
		\draw (190:0.0cm) -- (216:1cm);
		\draw (190:0.0cm) -- (252:1cm);
		\draw (275:0.4cm) -- (252:1cm);
		\draw (275:0.4cm) -- (288:1cm);
		\draw (275:0.4cm) -- (324:1cm);
		\draw (275:0.4cm) -- (0:1cm);

		\draw[red,thick,fill=white](190:0.0cm)circle(00.05);
		\draw[red,thick,fill=white](275:0.4cm)circle(00.05);
		
		\node at (85:0.5cm) {\tiny a};
		\node at (244:0.6cm) {\tiny e};
		\node at (48:0.6cm) {\tiny h};
		\node at (5:0.55cm) {\tiny f};
		\node at (267:0.75cm) {\tiny g};
		\node at (295:0.75cm) {\tiny d};
	\end{tikzpicture}
	\caption{Horizontal foliation before and after rotating} \label{cap:flip}
\end{figure}
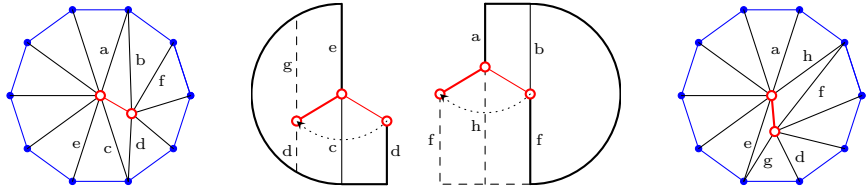
\par
Conversely, if two mixed-angulations differ by a forward flip we take differentials
locally as indicated in the metric picture and rotate the saddle connection
to produce a path as required.
\end{proof}
\par
We will recast this statement in terms of exchange graphs and generalize it
to collapsed surfaces in Section \ref{sec_subsurface}.

% !TeX root    = stab_qdiff_collapse.tex
%%%%%%%%%%%%%%%%%%%%%%%%%%%%%%%%%%
\section{Subsurface collapsing}\label{sec_subsurface}
%%%%%%%%%%%%%%%%%%%%%%%%%%%%%%%%%

In this section we formalize in the notion of \emph{collapse of a
subsurface}. In the special case of collisions, just a collection of simply
decorated points (but no topology) are pinched. This will be the
simplest ways to realize the generalized Bridgeland-Smith correspondence,
but also the general case will play a role in sequels.
\par
We summarize several notions of exchange graphs, related to tilting,
mutations and flips, and recall the relations between them, thereby introducing
spherical twist groups and braid twist groups. In particular we recall
an isomorphism between exchange graphs for $\pvd(\Gamma)$ and for decorated
marked surfaces with simple weights. This isomorphism will subsequently
be generalized to non-simple weights. As preparation on the topological
side we analyze refinements of mixed-angulations. Finally, we show auxiliary
connectivity results for the graph of refinements to be used in the
next section.

%%%%%%%%%%%%%%%%%%%%%%%%%%%%%%
\subsection{Collapse of subsurfaces}\label{sec:collapse}
%%%%%%%%%%%%%%%%%%%%%%%%%%%%%%

Let $\Sigma$ be a subsurface of a weighted DMS $\surf_{\w^0}$, possibly disconnected
with connected components $\Sigma_i$. We denote by $c_{ij}$ the (simple
closed) curves such that the union $\cup_j c_{ij}$ forms the intersection of
the boundary of~$\Sigma_i$  with the boundary of $\surf_{\w^0}\setminus \Sigma$. These will be the boundary components of~$\Sigma$ we will be most interested
in. An assignment of integers $\kappa_{ij}$ to each curve $c_{ij}$
is called an \emph{enhancement} (terminology in accordance with
\cite{LMS}) if
\be \label{eq:enhcond}
-\sum_j (\kappa_{ij}+2) + \sum_{k \in \Sigma_i} w^0_k \= 4g(\Sigma_i) - 4
\ee
for each~$i$, where we write $k \in \Sigma_i$, if the $k$-th decoration point
belongs to~$\Sigma_i$.
\par
\begin{definition}\label{def:collapes}
  A \emph{collapse datum} for $\surf_{\w^0}$ is a subsurface $\Sigma$ and an
enhancement $\{\kappa_{ij}\}$ with $\kappa_{ij} \geq 1$ for all $(i,j)$.
The \emph{collapse} of $\Sigma$ in $\surf_{\w^0}$ is the weighted DMS $\colsur$ obtained
by filling each boundary $c_{ij}$ in $\surf_{\w^0} \setminus \Sigma$  by
a disc with one decorated point that carries the weight $w_{ij} = \kappa_{ij} - 2$.
\end{definition}
\par
The condition~\eqref{eq:enhcond} ensures that the weights of $\colsur$
indeed satisfy the condition of a wDMS. The case of enhancements $\kappa =0$
ruled out here is special and requires a different treatment. \textbf{For
simplicity we consider here only collapse data with all $\kappa_{ij} \geq 3$.}
(The remaining cases involve mixed angulations with self-folded edges or
$2$-gons.) A special case of a collapse is a \emph{collision} where
the subsurface is topologically a disc. In case of a collision of
zeros, the positivity condition for the enhancements in the strong sense,
i.e., $\kappa_{ij} \geq 3$, is automatically satisfied.
\par
Consider the special case that $\surf_{\w^0} = \surfo$ has simple weights
and its subsurface~$\subsur$ also has simple weights. We can consider
$\subsur$ as a DMS with $\kappa_{ij}$ marked points on each boundary
component. We denote by $\colsur$ the resulting wDMS and thus we can put the
three surfaces into a \emph{symbolic short exact sequence}
\be\label{eq:ses-surf}
\begin{tikzcd}
    \subsur \ar[r,hookrightarrow] & \surfo \ar[r,rightsquigarrow] & \colsur\,.
\end{tikzcd}
\ee
\begin{figure}[ht]\centering
\makebox[\textwidth][c]{
\begin{tikzpicture}[scale=.3]
\begin{scope}[shift={(-4,0)}]
\draw[thick, fill = green!10](0,0) circle (5);
\node[font=\tiny] at(5,-3) {$C_{11}$};
\foreach \j in {0,...,4}{\draw (90+72*\j:5) coordinate  (w\j);}
\draw[thick,fill = gray!10](0,0) circle (1.5);
\node[font=\tiny] at(0,0) {$C_{12}$};
\foreach \j in {0,...,4}{\draw($(w\j)$)\nn;}
\foreach \j in {0,...,3}{\draw (45+90*\j:1.5) coordinate  (x\j);}
\foreach \j in {0,...,3}{\draw($(x\j)$)\nn;}
\foreach \j in {0,...,8}{\draw (90+40*\j:3) coordinate  (y\j);}
\foreach \j in {0,...,8}{\draw($(y\j)$)\ww;}
\end{scope}

\begin{scope}[shift={(15,0)},scale=1.5]
\draw[fill=cyan!10](-1,-3) to[out=190,in=-90](-7,-1)to[out=90,in=180](-3,4)to[out=0,in=170](0,3.2) to[out=350,in=160](4.2,2)to(4.2,-2)to[out=-160,in=10](-1,-3);
\draw[,fill=green!10](-0.2,-0.2)to[bend right=60](-1,-3) to[out=190,in=-90](-7,-1)to[out=90,in=180](-3,4)to[out=0,in=170](0,3.2)to[bend right=60](-0.2,0.2);
\draw (-5,1.4)\ww (-4.2,1.4)\ww (-3.4,1.4)\ww(-4.6,0.6)\ww
    (-5.4,.6)\ww(-3.8,.6)\ww (-5,-0.2)\ww
    (-4.2,-0.2)\ww (-3.4,-0.2)\ww (2,0)\ww;
%\draw(5,3)to[bend left=45](5,-2);
\draw [dashed] (-1,-3)to[bend right=60](-1,-0.2);
\draw [dashed] (-0.5,0.2)to[bend right=60](0,3.2);
\draw[thick,fill=white] (1,0) to[bend left=45] (-2,0) (.8,0) to[bend left=-30] (-1.7,0);
\node[font=\tiny] at(-.1,1.8) {$C_{11}$};
\node[font=\tiny] at(-.8,-1.7) {$C_{12}$};
\end{scope}
\begin{scope}[shift ={(21,0)},xscale=0.23,scale=1.5]
\draw[thick,fill = gray!10](0,0) circle (2);
\foreach \j in {0,...,7}{\draw (90+45*\j:2) coordinate  (z\j) ;}
\foreach \j in {0,...,7}{\draw($(z\j)$)\nn;}
\end{scope}

\begin{scope}[shift={(30,0)},scale=1.2]
\draw[thick,fill=cyan!10](0,0) circle (4);
\foreach \j in {0,...,7}{\draw (90+45*\j:4) coordinate  (z\j) ;}
\foreach \j in {0,...,7}{\draw($(z\j)$)\nn;}
\coordinate (q2) at (1,1);
\coordinate (q3) at (-1,1);
\coordinate (q) at (0,-1.5);
\draw[red,ultra thick,fill=white,font=\scriptsize](q3)circle(.25) node[above]{$3$};
\draw[red,very thick,fill=white,font=\scriptsize](q2)circle(.2) node[above]{$2$};
\draw[red,thick,fill=white,font=\scriptsize](q)circle(.17) node[below]{$1$};
\end{scope}
\draw(23.5,0)node{\huge{$\rightsquigarrow$}};
\draw[right hook-latex,>=stealth](2,0)to(4,0);

\draw (0,-6) node {$\subsur$}  (16,-6)node {$\surfo$} (27,-6) node{$\colsur$};
\end{tikzpicture}}
\label{fig:collapse}\caption{A collapse with $\kappa_{11}=5,\kappa_{12}=4$.}
\end{figure}
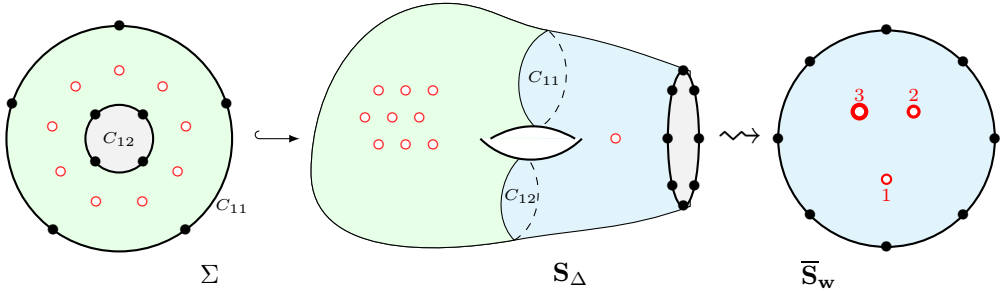
\par
We formalize the structure induced by a quadratic differential on $\sow$,
generalizing the notion induced in Definition~\ref{def:mixedang}
\par
\begin{definition} \label{def:partialtriang}
A \emph{partial triangulation~$\AS$ of a collapsed surface $\colsur$} is a
collection
of open arcs that triangulates the subsurface of $\surfo$ whose complement is
homeomorphic to $\Sigma$, and such that each boundary component $c_{ij}$
of~$\Sigma$ is homotopic in $\colsur \setminus \AS$ to a $(\kappa_{ij}
= w_{ij}+2)$-gon, possibly with ends points identified.
\par
The \emph{forward flip} of a  partial triangulation $\AS$, with respect to
an arc $\gamma\in\AS$, is an operation that moves the endpoints of $\gamma$
anti-clockwise (i.e. by a left fractional twist) along two the adjacent sides
of the smallest $\AS$-gon containing $\gamma$. The inverse of a forward flip is
a \emph{backward flip}, which moves the endpoints clockwise.
\end{definition}
\par
\medskip
\paragraph{\textbf{Refinements}}
Let $\TT = \{\gamma_j\}_{j\in J}$ be a triangulation of $\surfo$ and $\AS$
be a partial triangulation of the collapsed surface $\colsur$. We say that $\TT$
is a \emph{refinement} of~$\AS$ if the preimage of~$\AS$ under
$\surfo \rightsquigarrow \colsur$ is isotopic to a subset of~$\TT$. (Note
that these preimages are well-defined even though the collapse is not an
injective map if a component of~$\Sigma$ has several boundary components.)
We let $I = I(\TT,\AS) \subset J$ be the index set of the \emph{complementary
arcs}, the arcs in $\TT \setminus \AS$.
\par
The same remark justifies:
\par
\begin{definition} \label{def:PTfromq}
Let $(X,q, \psi\colon\sow\to X^q)$ be an $\sow$-framed quadratic differential
which is saddle-free. Then there is a partial triangulation $\AS_q$ on $\colsur$
\emph{induced from~$(q,\psi)$}, the preimage of the mixed-angulation given by
Definition~\ref{def:ASfromq} under the collapse $\surfo \rightsquigarrow \colsur$.
\end{definition}
\par
\begin{cor} \label{cor:EGB2}
Two components of $B_0 \subset \FQuad(\colsur)$ can be connected by an
arc~in~$B_2$ with only one point non-saddle-free if and only if the
corresponding partial triangulations are related by a forward flip.
\end{cor}
\par
\begin{proof} Take the preimage of the construction in Proposition~\ref{prop:EGB2}
under the collapse map.
\end{proof}

%%%%%%%%%%%%%%%%%%%%%%%%%%%%%%
\subsection{Exchange graphs and spherical twists}\label{subsection_EG}
%%%%%%%%%%%%%%%%%%%%%%%%%%%%%%

The mutation (of quivers), tilting (of categories) and flipping (of edges) operations give rise to a number of
exchange graphs that we summarize here.
\begin{itemize}[nosep]
\item The \emph{unoriented exchange graph} $\uEG(\surf)$ has vertices
  corresponding to triangulations of $\surf$ and edges corresponding to \emph{flips}.
\item Given a mutation equivalence class~$\mathfrak{Q}$ of a quiver,
  the \emph{unoriented cluster exchange graph}
$\uCEG(\mathfrak{Q})$ is the oriented graph whose vertices are cluster tilting
objects in $\calC(\mathfrak{Q})$ and whose edges are mutations between them
(see \cite{keller11} for more details).
\end{itemize}
\par
For the second definition note that mutation equivalences above identify all the
associated cluster categories (without nontrivial autoequivalences as monodromy).
Hence the symbol $\calC(\mathfrak{Q})$ is well-defined. In general
underlined symbols correspond to unoriented (exchange) graphs.
We need the oriented version of these graphs:
\begin{itemize}[nosep]
\item The \emph{exchange graph $\EG(\surf)$ of (an undecorated) surface $\surf$}
is obtained from $\uEG(\surf)$ by replacing each unoriented edge with a 2-cycle.
\item Similarly, the oriented version $\CEG(\mathfrak{Q})$ is obtained from
$\uCEG(\mathfrak{Q})$ by replacing each unoriented edge with a 2-cycle.
\item The \emph{exchange graph of the wDMS~$\sow$} is the directed graph $\EG(\sow)$
whose vertices are partial triangulations and whose oriented edges are forward flips
between them.
\item The \emph{(total) exchange graph} $\EG(\D)$ of a triangulated category $\D$
is the oriented graph whose vertices are all hearts in $\D$
and whose directed edges correspond to simple forward tiltings between them
(Section~\ref{sec:cats}). We abbreviate $\EG(\Gamma) \coloneqq \EG(\pvd(\Gamma))$.
\end{itemize}
\par
We usually focus attention on \emph{a connected component} $\EGp(\Gamma)$
of the exchange graph $\EG(\pvd(\Gamma))$, called the \emph{principal component},
consisting of those hearts that are
reachable by repeated simple tilting from the canonical heart $\h(\Gamma)$ in the quiver
case for $\Gamma=\Gamma(Q,W)$. Similarly, we write $\EGp(\sow)$ for a connected
component of the surface exchange graph. We also write $\EGp(\colsur)$ to
indicate that the wDMS is obtained by a subsurface collapse.
\par
Recall that a graph is called \emph{$(m_1,m_2)$-regular}, if each vertex
has $m_1$ outgoing edges and $m_2$ incoming edges. By definition the graphs
$\EG(\sow)$ and $\EG(\D)$ are $(m,m)$-regular with $m$ being the number of
arcs of the mixed-angulation or of the partial triangulation or the rank
of $K(\calD)$ respectively.
\par
We start the comparison of these graphs in the coarse (undecorated)
cases. If two triangulations are related by a flip, then both the corresponding quivers
with potential are related by a mutation, in the sense of \cite{FST,DWZ}.
\par
\begin{theorem}[{\cite{FST}}]\label{thm:FST} There is an isomorphism
$\uEG(\surf)\cong\uCEG(\surf)$
of the unoriented (triangulation) exchange graphs and cluster exchange graphs.
This isomorphism upgrades to an isomorphism $\EG(\surf)\cong\CEG(\surf)$.
\end{theorem}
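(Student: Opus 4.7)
The plan is to define a map $\Phi$ sending each triangulation $T$ of $\surf$ to the canonical cluster-tilting object $\Gamma_T := \bigoplus_{i \in (Q_T)_0} e_i\,\Gamma(Q_T, W_T)$ regarded in the Amiot cluster category $\calC(Q_T,W_T)$. First I would check that any two triangulations of $\surf$ have mutation-equivalent quivers with potential; this yields a well-defined mutation class $\mathfrak{Q}=\mathfrak{Q}(\surf)$ and, by the invariance part of Proposition~\ref{prop:mutationiso}, canonically identifies all the categories $\calC(Q_T,W_T)$ with a single $\calC(\mathfrak{Q})$, so that $\Phi$ lands in the vertex set of $\uCEG(\surf)$.

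Next I would establish the edge correspondence by invoking Labardini-Fragoso's theorem: if $T'$ is the flip of $T$ at an arc $\gamma$ corresponding to the vertex $k$ of $Q_T$, then $(Q_{T'}, W_{T'})$ is right-equivalent to $\mu_k(Q_T, W_T)$. Via the standard compatibility of QP-mutation with mutation of cluster-tilting objects in the Amiot category (built into Proposition~\ref{prop:mutationiso}), this translates directly into $\Gamma_{T'}$ being the mutation of $\Gamma_T$ at the summand $e_k\Gamma_T$. Hence $\Phi$ sends unoriented edges to unoriented edges.

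For bijectivity, injectivity follows because indecomposable summands of $\Gamma_T$ correspond bijectively to the arcs of $T$ (inside a natural bijection between arcs on $\surf$ and indecomposable rigid objects of $\calC(\mathfrak{Q})$), so the triangulation $T$ is recovered from $\Gamma_T$. Surjectivity uses the classical fact of \cite{FST} that any two triangulations of $\surf$ are connected by a sequence of flips: iterated flips from a fixed $T_0$ realize every cluster-tilting object reachable from $\Gamma_{T_0}$ by mutations, and every cluster-tilting object in a connected component of $\uCEG(\mathfrak{Q})$ is reachable by mutations by definition. The upgrade to the oriented statement $\EG(\surf) \cong \CEG(\surf)$ is then immediate: both sides double each unoriented edge into a $2$-cycle (a flip and its inverse on one side; a mutation and its opposite, which is involutive on cluster-tilting objects, on the other), and these $2$-cycles match under the vertex-edge bijection already constructed.

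The hard part is Labardini-Fragoso's identification of flips of triangulations with QP-mutations at the level of the potential. Mutation in the sense of \cite{DWZ} first produces a \lq\lq reduced plus trivial\rq\rq{} decomposition that must be split off via the Splitting Theorem before one can recognize the outcome as the combinatorial QP $(Q_{T'},W_{T'})$; this step is delicate and relies on the non-degeneracy of the QP associated to a triangulated surface. Once this compatibility is in hand, the remaining steps are routine bookkeeping between arc combinatorics and cluster-tilting combinatorics.
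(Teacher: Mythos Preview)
The paper does not give a proof of this theorem; it is simply recorded as a citation to \cite{FST}. Your sketch follows the standard route to the categorical version of the statement---note that the original \cite{FST} concerns cluster algebras rather than Amiot's cluster categories, so the formulation in the paper already bundles in results of Labardini-Fragoso and Amiot. Your identification of Labardini-Fragoso's flip-equals-QP-mutation theorem as the decisive technical input is correct, and the edge correspondence and the oriented upgrade are handled properly.

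There is, however, a genuine gap in your surjectivity argument. You establish that every cluster-tilting object \emph{reachable by mutation from} $\Gamma_{T_0}$ arises from a triangulation, using connectedness of the flip graph. But the paper defines $\uCEG(\mathfrak{Q})$ to have as vertices \emph{all} cluster-tilting objects of $\calC(\mathfrak{Q})$, not only those in the mutation-reachable component. Showing that $\uCEG(\mathfrak{Q})$ is connected---equivalently, that every cluster-tilting object of $\calC(\mathfrak{Q})$ is isomorphic to some $\Gamma_T$---is a separate and nontrivial theorem (for unpunctured surfaces it is obtained from the classification of indecomposable rigid objects in the cluster category via curves on $\surf$). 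Your sentence ``every cluster-tilting object in a connected component of $\uCEG(\mathfrak{Q})$ is reachable by mutations by definition'' is tautologically true but does not close this gap; you still need the classification result to conclude that $\Phi$ hits every vertex.
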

\par
\medskip
\paragraph{\textbf{Spherical twist groups}} For further graph comparison we
let $\ST(\Gamma) \leq \Aut(\pvd(\Gamma))$ be the \emph{spherical twist group}
of $\pvd(\Gamma)$, that is the subgroup generated by the set of twists
$\{\Phi_{S}\mid S\in \Sim\calH(\Gamma)\}$,
where the \emph{twist functor} $\Phi_S$ is defined by
\begin{equation}\label{eq:phi+}
    \Phi_S(X)=\Cone\left(S\otimes\Hom^\bullet(S,X)\to X\right)
\end{equation}
Note that $\ST(\Gamma)$ is in fact generated by spherical twists along all reachable
spherical objects, that is all simples in some $\calH\in\EGp\pvd(\Gamma)$, see \cite[\S~2.2]{qiubraid16}.
\par
For a heart $\calH\in\EGp(\pvd(\Gamma))$ we denote by
$\EGp[\calH,\calH[1]]$ the full subgraph whose vertices are intermediate
hearts $\calH\leq \h'\leq \calH[1]$.
The following result is \cite[Theorem~2.10]{KQ2}, based on the unpublished
result of Keller-Nicol\'as announced in \cite[Theoreom~5.6]{keller11}.
\par
\begin{theorem}\label{thm:KN}
Let $\Gamma$ be the Ginzburg dg algebra of some non-degenerate quiver with
potential $(Q,W)$. There is a covering of oriented graphs
\begin{gather}\label{eq:quo}
    \EGp(\pvd(\Gamma))/\ST(\Gamma) \cong \CEG(\Gamma).
\end{gather}
The fundamental domain of $\EGp(\pvd(\Gamma))/\ST(\Gamma)$ is
$\EGp[\calH,\calH[1]]$ for any heart $\calH\in\EGp(\pvd(\Gamma))$,
in the sense that there is an isomorphism between unoriented graph
\[
    \underline{\EGp}[\calH,\calH[1]]\cong\uCEG(\Gamma),
\]
where $\underline{\EGp}$ denotes the underlying unoriented graph of $\EGp$.
\end{theorem}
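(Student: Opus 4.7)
The plan is to factor the identification through silting objects in $\per(\Gamma)$, using two now-standard bijections. First, I invoke the Koenig--Yang correspondence: the hearts of bounded t-structures in the principal component $\EGp(\pvd(\Gamma))$ are in bijection with isomorphism classes of basic silting objects in a distinguished component of the silting poset of $\per(\Gamma)$, via an assignment $\calH \mapsto T_\calH$ that sends simple forward tilts to irreducible left silting mutations. Crucially, $\ST(\Gamma) \le \Aut(\pvd(\Gamma))$ extends naturally to autoequivalences of $\per(\Gamma)$ and acts on silting objects compatibly with the Koenig--Yang bijection, so the left hand side of \eqref{eq:quo} can be replaced by the silting exchange graph modulo~$\ST(\Gamma)$.

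Next, I apply Amiot's theorem (in its form due to Keller--Nicol\'as / Plamondon): the Verdier quotient functor $\pi_\Gamma\colon\per(\Gamma)\to\calC(\Gamma)$ of \eqref{eq:ses} sends silting objects to cluster tilting objects and irreducible silting mutations to cluster mutations, thereby producing $\CEG(\Gamma)$ as a quotient graph. To finish \eqref{eq:quo} I must identify the kernel of this projection with $\ST(\Gamma)$, i.e.\ prove that two silting objects in $\per(\Gamma)$ have the same image in $\calC(\Gamma)$ if and only if they differ by an element of $\ST(\Gamma)$. One direction is immediate, since spherical twists restrict to the identity on the cluster category. The other direction is the heart of the matter: one shows that the fiber of $\pi_\Gamma$ over a fixed cluster tilting object is a torsor for the group generated by twists along simples of reachable hearts, which by \cite[\S 2.2]{qiubraid16} coincides with $\ST(\Gamma)$.

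For the fundamental domain statement, fix $\calH \in \EGp(\pvd(\Gamma))$. By Lemma~\ref{lem:nearby_heart}, the interval $\EGp[\calH,\calH[1]]$ consists of hearts obtained from $\calH$ by iterated simple forward tilts that remain below $\calH[1]$; under Koenig--Yang these correspond to silting objects sandwiched between $T_\calH$ and $T_\calH[1]$. The plan is to show that this family of silting objects maps bijectively onto cluster tilting objects of $\calC(\Gamma)$ under $\pi_\Gamma$, which then produces the required isomorphism $\underline{\EGp}[\calH,\calH[1]] \cong \uCEG(\Gamma)$. Surjectivity amounts to lifting any cluster tilting object to a silting object in the chosen interval, using that $[T_\calH,T_\calH[1]]$ has full cardinality $2^{|\Sim\calH|}$ matching the local structure of $\uCEG(\Gamma)$ near the vertex $\pi_\Gamma(T_\calH)$. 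Injectivity follows from showing that no nontrivial spherical twist keeps a silting object inside $[T_\calH,T_\calH[1]]$.

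The main obstacle, and the deepest input, is precisely the freeness and properness of the $\ST(\Gamma)$-action on $\EGp(\pvd(\Gamma))$: a nontrivial product of spherical twists cannot fix a heart in the principal component. This is the content of the Keller--Nicol\'as theorem announced in \cite[Theorem~5.6]{keller11}. In practice it is verified by tracking the induced permutation on the classes of simples inside $K(\pvd(\Gamma))$ and using a well-chosen stability condition (which is why the companion result in \cite{KQ2} is formulated alongside the Bridgeland--Smith correspondence): any spherical twist fixing a heart would have to act trivially on the central charges of its simples, forcing it to be the identity. Once this rigidity is established, the combination of Koenig--Yang and Amiot's theorem yields both the covering \eqref{eq:quo} and the fundamental domain description.
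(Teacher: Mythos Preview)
Your overall strategy matches the paper's: factor through silting objects in $\per(\Gamma)$ via the simple-projective (Koenig--Yang) duality, then project to $\calC(\Gamma)$ via $\pi_\Gamma$, with simple tilts becoming silting mutations becoming cluster mutations. The paper's ``idea of proof'' follows exactly this route and, like you, ultimately defers to Keller--Nicol\'as for the deepest point.

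There are two genuine gaps in your execution, however. First, the cardinality claim ``$[T_\calH,T_\calH[1]]$ has full cardinality $2^{|\Sim\calH|}$'' is false: already for the $A_2$ quiver the interval contains five hearts, not four. The interval is parametrized by torsion pairs in~$\calH$ (Lemma~\ref{lem:nearby_heart}), and their number depends on the representation theory of the Jacobian algebra, not just on~$|\Sim\calH|$. So your surjectivity argument for the fundamental domain does not go through as stated.

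Second, your route to identifying the fiber of $\pi_\Gamma$ with an $\ST(\Gamma)$-orbit is more roundabout than necessary, and the freeness argument via stability conditions is not correct as written (a spherical twist acting trivially on central charges need not act trivially on the slicing). The paper instead singles out a concrete mechanism: the composition of two consecutive simple forward tilts at the same simple equals a spherical twist (citing \cite[Corollary~8.4]{kq} or \cite[Proposition~7.9]{BS15}). Since in $\CEG(\Gamma)$ mutation is involutive (each unoriented edge is a $2$-cycle), this identity simultaneously shows that the map $\EGp(\pvd(\Gamma))\to\CEG(\Gamma)$ is $\ST(\Gamma)$-invariant and that any two hearts over the same cluster tilting object are connected by a chain of such double tilts, hence differ by an element of $\ST(\Gamma)$. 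This is the observation you should use in place of the torsor/freeness discussion.
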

\par
%\begin{proof}[Idea of proof] The map $\EGp(\pvd(\Gamma))\to\CEG(\Gamma)$ goes as follows:
%\begin{itemize}[nosep]
%\item
%A finite heart in $\EGp(\pvd(\Gamma))$ corresponds to a silting object in $\per(\Gamma)$ by the simple-projective duality
%(see Section~\ref{sec:simpleprojD} for more details).
%\item The silting object further maps to a cluster tilting object in $\CEG(\Gamma)$ under the projection $\pi_\Gamma$ in \eqref{eq:ses}.
%\item The map preserves edges as simple tilting of finite hearts corresponds to mutation of silting/cluster tilting objects.
%\item Finally, the composition of two simple tilting/mutation equals a spherical twist (\cite[Corollary~8.4]{kq} or \cite[Proposition~7.9]{BS15}). Thus the map
%  is $\ST(\Gamma)$-invariant.
%\end{itemize}
%This result is essentially due to a result of Keller-Nicol\'as
%(unpublished, announced in \cite[Theoreom~5.6]{keller11}, see
%\cite[Theorem~8.6]{kq} for the acyclic case and \cite[Theorem~2.10]{KQ2} for the
%general case.)
%\end{proof}
\par
\medskip

%%%%%%%%%%%%%%%%%%%%%%%
\subsection{Braid groups}
\label{sec:braidgroups}
%%%%%%%%%%%%%%%%%%%%%%%

Two types of braid groups provide the relation between the various exchange
graphs appearing here.
\par
\medskip
\paragraph{\textbf{Surface braid groups}}
One of the standard definitions of the surface braid group $\SBr(\surfo)$
of a DMS (with non-empty boundary) is as the fundamental group of
the configuration space $\operatorname{conf}_{\deco}(\surf)$ of $|\deco|$
(unordered) points in $\surf$. It is a well-known theorem (see e.g.\
\cite[Section~2.4, equation~(5)]{GP})
that the surface braid group is a subgroup of mapping class groups
\be\label{eq:EG/SBr}
    \SBr(\surfo):=\pi_1\operatorname{conf}_{\deco}(\surf)=\ker\big( \MCG(\surfo) \xrightarrow{F_*} \MCG(\surf) \big),
\ee
where $F_*$ is induced by the forgetful map $F\colon \surfo\to\surf$, forgetting the decorations.
There is a natural isomorphism between graphs
\be \label{eq:EGSBr}
    \EG(\surfo)/\SBr(\surfo)=\EG(\surf),
\ee
induced by the induced map $F:\EG(\surfo)\to\EG(\surf)$.
\par
While $\SBr(\surfo)$ is the traditional generalization of the classical braid group,
we need a (normal) subgroup of it, since we would like to restrict
$\EG(\surfo)$ in \eqref{eq:EGSBr} to a connected component.
\par
\medskip
\paragraph{\textbf{Braid twist groups}}
For any closed arc $\eta\in\CA(\surfo)$, there is a (positive) \emph{braid twist}
$\Bt{\eta}\in\MCG(\surfo)$ along $\eta$, as shown in Figure~\ref{fig:Braid twist}.
The \emph{braid twist group} $\BT(\surfo)$ of the decorated marked surface $\surfo$
is the subgroup of $\MCG(\surfo)$ generated by the braid twists $\Bt{\eta}$
for all $\eta\in\CA(\surfo)$.
\par
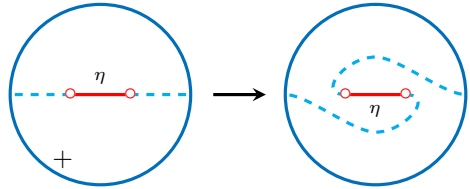
\begin{figure}[ht]\centering
\begin{tikzpicture}[scale=.2]
  \draw[very thick,NavyBlue](0,0)circle(6)node[above,black]{$_\eta$};
  \draw(-120:5)node{+};
  \draw(-2,0)edge[red, very thick](2,0)  edge[cyan,very thick, dashed](-6,0);
  \draw(2,0)edge[cyan,very thick,dashed](6,0);
  \draw(-2,0)node[white] {$\bullet$} node[red] {$\circ$};
  \draw(2,0)node[white] {$\bullet$} node[red] {$\circ$};
  \draw(0:7.5)edge[very thick,->,>=stealth](0:11);%\draw(0:9)node[above]{$\Bt{\eta}$};
\end{tikzpicture}\;
%=======================================================
\begin{tikzpicture}[scale=.2]
  \draw[very thick, NavyBlue](0,0)circle(6)node[below,black]{$_\eta$};
  \draw[red, very thick](-2,0)to(2,0);
  \draw[cyan,very thick, dashed](2,0).. controls +(0:2) and +(0:2) ..(0,-2.5)
    .. controls +(180:1.5) and +(0:1.5) ..(-6,0);
  \draw[cyan,very thick,dashed](-2,0).. controls +(180:2) and +(180:2) ..(0,2.5)
    .. controls +(0:1.5) and +(180:1.5) ..(6,0);
  \draw(-2,0)node[white] {$\bullet$} node[red] {$\circ$};
  \draw(2,0)node[white] {$\bullet$} node[red] {$\circ$};
\end{tikzpicture}
\caption{The braid twist $\Bt{\eta}$}
\label{fig:Braid twist}
\end{figure}
Let $\TT$ be a triangulation of the decorated surface $\surfo$ consisting of
open arcs. The \emph{dual graph} $\TT^*$ of $\TT$ is then a collection of
closed arcs $\eta$. By \cite[Lemma~4.2]{qiubraid16},
$\{\Bt{\eta}\mid\eta\in\TT^*\}$ is a set of generators of $\BT(\surfo)$.
\par
%
%\textcolor{gray}{
%OLD: For the later use, we recall the relation between $\BT(\surfo)$ and $\SBr(\surfo)$.
%Following \cite[Definition~2.4]{QZ2} we call \emph{L-arcs} the arcs that
%start and end at the same point in~$\deco$.
%Choose one decoration point~$Z_0$
%and take a topological disk~$D$ containing all other decorations.
%Let~$L_0(\surfo)$ be the subgroup of~$\SBr(\surfo)$ generated by Dehn twists
%along L-arcs that are based at~$Z_0$ and do not intersect~$D$.
%%
%\begin{lemma}
%The braid twist group is a normal subgroup of the surface braid group, i.e.
%there is a semidirect product decomposition
%\be\label{eq:semiproduct}
%    \SBr(\surfo) \= \BT(\surfo)\ltimes L_0(\surfo).
%\ee
%\end{lemma}
%%
%\begin{proof} This follows from the presentation of $\SBr(\surfo)$ described in \cite[Proposition~2.7]{QZ2},
%where braid twists and Dehn twists at L-arcs are exhibited as generators of the surface braid group
%and where $\BT(\surfo)$ is shown to be a normal subgroup. END OLD
%\end{proof}
%}

For later use, we give a characterization of $\BT(\surfo)$.
By~\eqref{eq:EG/SBr}, any $\xi\in\SBr(\surfo)$ corresponds to $|\Delta|$
paths~$p_i$ on $\surfo$. Their union~$\coprod p_i$ forms a collection of
cycles in $\surf$. The product of these cycles gives a well-defined element
in $\on{H}_1(\surf)$ (more details are given in the forthcoming paper
\cite{Q24}) and we obtain a map, called \emph{topological Abel-Jacobi
map} $\on{AJ}=\on{AJ}_{\surfo}\colon\SBr(\surfo)\to\on{H}_1(\surf)$.
\par

\begin{lemma}\label{eq:BT=kerAJ}
The braid twist group $\BT(\surfo)$ is precisely the kernel of the
map~$\on{AJ}$ provided $|\Delta|>1$.
\end{lemma}
\begin{proof}
By \cite[Proposition~2.7, in particular Figure~4]{QZ2}, the group
$\SBr(\surfo)$ admits a set of generators~$\sigma_i$ for $1\leq i\leq
|\Delta|-1$, and~$\delta_r$ for~$1\leq r\leq 2g+b-1$. Here the~$\sigma_i$
are are braid twists along a collection of arcs connecting the marked
points (within a topological discs). The~$\delta_r$ are point-pushing
diffeomorphisms around simple closed curves based at the first marked point,
namely the~$2g$ curves of a canonical dissection and~$b-1$ curves around the
boundary components. We denote by~$H$ the subgroup of $\SBr(\surfo)$
generated by $\delta_r$, which is isomorphic to $\pi_1(\surf,Z_1)$. In
particular~$H$ is a free group and $H/[H,H]\cong\on{H}_1(\surf)$.
By definition, $\BT(\surfo)$ is a normal subgroup of~$\SBr(\surfo)$ and
contained in~$\ker\on{AJ}$. Thus we express the map $\on{AJ}$ as
$$\SBr(\surfo) \=\BT(\surfo)\cdot H\to H/[H,H],$$
sending the generators of~$\BT(\surfo)$  to the neutral element and the
elements in~$H$ to their classes modulo~$[H,H]$. This implies that
$\BT(\surfo)\cap H\le[H,H]$. A direct calculation shows that
$[\delta_s,\delta_r]$ is in $\BT(\surfo)$ for any $1\le s<r\le 2g+b-1$.
In fact, if we change generators for convenience as in
\cite[Proposition~3.1, in particular Figure~7]{QZ2}
and define (setting $\varepsilon_0 = 1$)
$$\varepsilon_r \= \begin{cases} \delta_r \varepsilon_{r-1} & \text{if $r \not
\in 2\bN_{\leq g}$} \\
\delta_r \varepsilon_{r-2} & \text{if $r \in 2\bN_{\leq g},$}
\end{cases}
\quad \text{i.e.} \quad
\delta_r \= \begin{cases} \varepsilon_r \varepsilon_{r-1}^{-1}
& \text{if $r \not
\in 2\bN_{\leq g}$} \\
\varepsilon_r \varepsilon_{r-2}^{-1} & \text{if $r \in 2\bN_{\leq g},$}
\end{cases}
$$
then such a commutator equals (using $\tau_j = \epsilon_j \sigma_1
\epsilon_j^{-1}$, as drawn in \cite[Figure~8]{QZ2})
\[ [\epsilon_s,\epsilon_r] \= \begin{cases}
  \iv{\tau_{s}b}a\iv{\tau_{r}a\tau_{s}}
  ab\tau_{r}b & \text{if $s+1 \in 2\bN_{\leq g}$,}\\
  \iv{bb\tau_{s}b}a\tau_{r}{a}^{-1}\tau_{s}ab\tau_{r}b & \text{otherwise,}
\end{cases} \]
where $a=\sigma_2\sigma_1\sigma_2^{-1}$ and $b=\sigma_2$. Here the cases
depend on the relative position of $\tau_s$ and $\tau_r$ at~$Z_2$.
(More precisely, if $s<r$ the first case occurs precisely if $\tau_r$ is
before~$\tau_s$ in the counterclockwise order of a neighborhood of $Z_2$ slit
along the arc~$\sigma_1$.)
This implies that $[H,H]\le\BT(\surfo)$ and hence that $\BT(\surfo)\cap
H=[H,H]$ or equivalently $\BT(\surfo)=\ker\on{AJ}$, as claimed.
\end{proof}

\par
We can now summarize the whole discussion in the following two theorems.
The first restricts~\eqref{eq:EGSBr} to a connected component.
\par
\begin{theorem}\label{thm:EGsurfo}
There is an isomorphism
$\EGp(\surfo)/\BT(\surfo)=\EG(\surf)$
between the exchange graph of the undecorated surface and the
braid twist orbits of the exchange graph of the decorated surface.
\end{theorem}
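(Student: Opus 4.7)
The plan is to derive the statement from the two identities $\EG(\surfo)/\SBr(\surfo) = \EG(\surf)$ in~\eqref{eq:EGSBr} and $\SBr(\surfo) = \BT(\surfo) \ltimes L_0(\surfo)$ in~\eqref{eq:semiproduct}. Concretely I would show that $\BT(\surfo)$ preserves the chosen connected component $\EGp(\surfo)$, and that the setwise stabiliser of $\EGp(\surfo)$ in $L_0(\surfo)$ (for the induced action of $\SBr(\surfo)/\BT(\surfo)$ on the set of connected components of $\EG(\surfo)$) is trivial. Granting these two properties, the forgetful functor restricts to a surjective graph map $\overline{F_*}\colon \EGp(\surfo)/\BT(\surfo) \to \EG(\surf)$ whose injectivity is automatic.

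First I would check that each generator $\Bt\eta$ of $\BT(\surfo)$ preserves $\EGp(\surfo)$. For a triangulation $\TT$ whose dual arc system contains $\eta$, the twist $\Bt\eta$ can be realised explicitly as a short sequence of flips supported in a neighbourhood of $\eta$, so $\Bt\eta\cdot\TT$ is flip-connected to $\TT$. For a general vertex of $\EGp(\surfo)$ this reduces to the previous case by conjugation, using that $\BT(\surfo)$ is generated by twists dual to a fixed triangulation in $\EGp(\surfo)$ via \cite[Lemma~4.2]{qiubraid16}. Surjectivity of $\overline{F_*}$ is then immediate: $F_*$ sends flips to flips and $\EG(\surf)$ is connected, so the $F_*$-image of $\EGp(\surfo)$ is a nonempty flip-closed subgraph, hence all of $\EG(\surf)$.

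The crucial remaining step is injectivity, and I expect it to be the main obstacle. If $F_*(\TT_1) = F_*(\TT_2)$ for $\TT_1,\TT_2 \in \EGp(\surfo)$, then~\eqref{eq:EGSBr} yields $\sigma \in \SBr(\surfo)$ with $\sigma\cdot\TT_1 = \TT_2$; writing $\sigma = \beta\cdot\ell$ via~\eqref{eq:semiproduct} one has $\ell\cdot\TT_1 = \beta^{-1}\cdot\TT_2 \in \EGp(\surfo)$. The theorem therefore reduces to the assertion that no $\ell \in L_0(\surfo)\setminus\{1\}$ maps any vertex of $\EGp(\surfo)$ back into $\EGp(\surfo)$, since $L_0(\surfo) \cap \BT(\surfo) = \{1\}$ inside $\SBr(\surfo)$. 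My proposed approach is to construct an $L_0(\surfo)$-sensitive invariant on the vertices of $\EG(\surfo)$ (for example a class in $H_1(\surf\setminus\deco;\Z/2)$, or an equivalent framing/spin datum) that is constant on flips and on braid twists but is strictly moved by L-arc Dehn twists, so that distinct $L_0(\surfo)$-translates of $\EGp(\surfo)$ occupy distinct level sets. Once such an invariant is exhibited, the $L_0(\surfo)$-action on the $\SBr(\surfo)$-orbit of $\EGp(\surfo)$ is free, and with it the claimed isomorphism follows.
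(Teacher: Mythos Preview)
Your reduction is correct up to the point where you isolate the statement that no nontrivial $\ell\in L_0(\surfo)$ sends a vertex of $\EGp(\surfo)$ back into $\EGp(\surfo)$; this is indeed equivalent to the injectivity you need. The gap is that you do not construct the promised invariant, and your concrete suggestion is unlikely to work as stated. The group $L_0(\surfo)$ is (via the point-pushing description) essentially $\pi_1(\surf)$, a free group of rank $2g+b-1$. Any invariant valued in a finite set such as $H_1(\surf\setminus\deco;\Z/2)$ can at best detect the image of $\ell$ in a finite quotient of $L_0(\surfo)$, so it cannot separate all nontrivial $\ell$. A framing or spin-type invariant might in principle do better, but you would have to specify how to attach such a datum to a decorated triangulation, verify flip-invariance, and then prove that the $L_0(\surfo)$-action on it is free --- none of which is routine. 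As written, the proposal stops exactly at the hard step.

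The paper takes a different and more direct route that sidesteps the invariant entirely. The key input is the elementary local computation (\cite[Lemma~3.8]{qiubraid16}) that the composition of two forward flips at the same arc of a decorated triangulation equals the braid twist along the dual closed arc. Combined with a fundamental-domain statement for the $\BT(\surfo)$-action on $\EGp(\surfo)$ (borrowed from \cite[Proposition~8.3]{kq}), this shows directly that any two triangulations in $\EGp(\surfo)$ lying over the same vertex of $\EG(\surf)$ differ by an element of $\BT(\surfo)$. In other words, rather than proving that $L_0(\surfo)$ moves components, the paper proves positively that $\BT(\surfo)$ already acts transitively on each $F$-fibre inside $\EGp(\surfo)$. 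This is both shorter and avoids the need to analyse the $L_0(\surfo)$-action at all.
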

\par
\begin{proof} This is the content of \cite[Remark~3.10]{qiubraid16}.
In fact, Lemma~3.9 in loc.\ cit.\ shows that there is a well-defined
surjective map $\EGp(\surfo)/\BT(\surfo) \to \EG(\surf)$. To show injectivity
it suffices to know that the directed graph of intermediate hearts is
a fundamental domain for the $\BT(\surfo)$-action. The  Lemma~3.8 in loc.\
cit.\ shows that composition of two forward flips is a braid twist and
completes the proof. For the claim on fundamental domains we apply
\cite[Proposition~8.3]{kq}. (We can't apply Theorem~\ref{thm:KN} since
the current theorem is used in its proof.)
\end{proof}
\par
The twist groups in the preceding theorems can be identified and the
corresponding isomorphism can be lifted.
\par
\begin{theorem}\cite{qiubraid16,Q3} \label{thm:STBTiso}
There is an isomorphism $\ST(\Gamma_\TT)\cong\BT(\TT)$ between the twist groups,
sending the standard generators to the standard generators.
Thus the isomorphism (between oriented graphs) in Theorem~\ref{thm:FST} lifts to an isomorphism
\be \label{EGpvdEG}
\EGp\pvd(\Gamma_\TT)\,\cong\,\EGp(\surfo)\,.
\ee
\end{theorem}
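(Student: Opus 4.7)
The plan is to first construct a group isomorphism on standard generators, and then lift the unoriented graph isomorphism of Theorem~\ref{thm:FST} by exhibiting both sides as quotients by compatible actions. Fix the canonical heart $\calH(\Gamma_\TT)$ whose simples $\{S_\gamma\}_{\gamma\in\TT}$ are in bijection with the open arcs of $\TT$, and recall that the dual graph $\TT^*$ has one closed arc $\gamma^*$ per open arc $\gamma$. Define the candidate map $\Psi\colon \BT(\TT)\to\ST(\Gamma_\TT)$ on generators by $\Psi(\Bt{\gamma^*})=\Phi_{S_\gamma}$.

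To see $\Psi$ respects the defining relations I would compare pairwise data on both sides. By Seidel--Thomas, $\Phi_{S_i}$ and $\Phi_{S_j}$ commute when $\Hom^*(S_i,S_j)=0$, and satisfy the braid relation $\Phi_{S_i}\Phi_{S_j}\Phi_{S_i}=\Phi_{S_j}\Phi_{S_i}\Phi_{S_j}$ when $\dim\Ext^1(S_i,S_j)=1$. Symmetrically, two braid twists $\Bt{\eta_i},\Bt{\eta_j}$ commute when the closed arcs are disjoint and braid when they share exactly one endpoint and do not otherwise intersect. The construction of $(Q_\TT,W_\TT)$ in Section~\ref{sec:MS} was set up precisely so that the number of arrows $S_i\to S_j$ in $Q_\TT$ counts the endpoint incidences of the dual closed arcs, and the Jacobian relations imposed by $W_\TT$ give $\dim\Ext^1(S_i,S_j)$ equal to this incidence count. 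Thus the Seidel--Thomas relations match the braid-twist relations locally at every pair, so $\Psi$ extends to a well-defined group homomorphism.

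For surjectivity I would use that every reachable spherical twist can be rewritten in terms of $\{\Phi_{S_\gamma}\}$ via the tilting formulas recalled in \cite[\S2.2]{qiubraid16}, while $\{\Bt{\gamma^*}\}_{\gamma^*\in\TT^*}$ generates $\BT(\TT)$ by \cite[Lemma~4.2]{qiubraid16}. For injectivity, the cleanest route is to exhibit a faithful action of $\BT(\TT)$ on (isotopy classes of) closed-arc systems in $\surfo$ and to match it, via $\Psi$, with the faithful action of $\ST(\Gamma_\TT)$ on reachable spherical collections: the identification sends a closed arc $\eta$ obtained from $\gamma^*$ by a sequence of braid twists to the simple $\Phi(S_\gamma)$ in the correspondingly tilted heart. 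That this matching is well defined and bijective is the categorification of \cite[Lemma~3.8]{qiubraid16}, where a composition of two simple forward flips corresponds to a braid twist, mirroring the analogous statement for spherical twists from \cite[Corollary~8.4]{kq}.

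Finally, to lift the isomorphism to $\EGp$, I would chain three quotient results. Theorem~\ref{thm:KN} gives $\EGp\pvd(\Gamma_\TT)/\ST(\Gamma_\TT)\cong\CEG(\Gamma_\TT)$, with fundamental domain $\EGp[\calH,\calH[1]]$; Theorem~\ref{thm:EGsurfo} gives $\EGp(\surfo)/\BT(\TT)\cong\EG(\surf)$, with an analogous fundamental domain of intermediate flips; and Theorem~\ref{thm:FST} identifies the two bases $\CEG(\Gamma_\TT)\cong\EG(\surf)$. Using $\Psi$ to identify the two acting groups and observing that the fundamental domains correspond---each parameterises the same combinatorial data of single-step refinements of $\TT$, with simple tiltings matched to forward flips---one obtains the $\Psi$-equivariant isomorphism $\EGp\pvd(\Gamma_\TT)\cong\EGp(\surfo)$. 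The main obstacle is the injectivity of $\Psi$: since neither $\ST$ nor $\BT$ admits a manageable finite presentation in general, one cannot work purely algebraically, and the heart of the argument is the geometric/categorical matching of actions described above.
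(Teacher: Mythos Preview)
Your overall architecture is sound, but there is a genuine gap in the first step: the well-definedness of $\Psi$. You propose to show that $\Psi(\Bt{\gamma^*})=\Phi_{S_\gamma}$ extends to a group homomorphism by checking that the pairwise commutation and braid relations match. But $\BT(\TT)$ is \emph{defined} as a subgroup of $\MCG(\surfo)$, not by a presentation; a priori it satisfies many more relations than the Artin-type ones you list. You yourself flag this at the end (``neither $\ST$ nor $\BT$ admits a manageable finite presentation in general''), yet your well-definedness argument tacitly assumes exactly such a presentation. Checking that the Seidel--Thomas relations hold for the $\Phi_{S_\gamma}$ only shows that the Artin group on the intersection pattern of $\TT^*$ maps to $\ST(\Gamma_\TT)$; it does not show that this map factors through the quotient $\BT(\TT)$.

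The fix is already present in your proposal, just misplaced. The equivariant bijection you describe for injectivity---matching closed arcs with reachable spherical objects so that $B_\eta$ corresponds to $\Phi_{X(\eta)}$---is precisely what the cited sources use, and it gives well-definedness and bijectivity simultaneously: once you know both groups act faithfully and the actions intertwine on generators, the assignment on generators \emph{is} a group isomorphism, with no relation-checking needed. This is how \cite{qiubraid16} proceeds (and is restated in the paper as Theorem~\ref{thm:QQ}, equation~\eqref{eq:acts}). Alternatively, you can reverse your order of attack: use the covering-theory argument in your last paragraph to produce the graph isomorphism \emph{first}---the two regular covers of $\CEG(\Gamma_\TT)\cong\EG(\surf)$ are isomorphic because the monodromy generators (double flips vs.\ double tilts) correspond---and then read off the deck-group isomorphism $\BT\cong\ST$ as a consequence. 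Either way, the pairwise Seidel--Thomas check should be dropped as the mechanism for well-definedness.
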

As a consequence, we have $\EGp\pvd(\Gamma_\TT)/\ST(\surfo) \cong \EG(\surf)$.

%%%%%%%%%%%%%%%%%%%%%%%
\subsection{Principal parts of exchange graphs}
\label{sec:refprinc}
%%%%%%%%%%%%%%%%%%%%%%%

In order to use the preceding results on $\EG(\surfo)$ to explore the graph
$\EG(\colsur)$ we need to relate partial triangulations and triangulations.
\par
\medskip
\paragraph{\textbf{Principal parts}} Let us fix an initial triangulation~$\TT_0$
of~$\surfo$ and let $\EGp(\surfo)$ be the principal connected component
of $\EG(\surfo)$ containing~$\TT_0$. We define the \emph{principal part}
$\EGb(\colsur)$ of~$\EG(\colsur)$ to be the full subgraph of $\EG(\colsur)$
consisting of the partial triangulations which admit a refinement that belongs
to the component $\EGp(\surfo)$. Note that
\begin{itemize}
  \item we do not claim that $\EGb(\colsur)$ is connected. Moreover,
\item a priori it is not even clear if $\EGb(\colsur)$ consists of connected
  components. That is, it is not a priori clear that vertices in $\EGb(\colsur)$
  that are connected through~$\EG(\colsur)$ are in fact connected
  through~$\EGb(\colsur)$.
\end{itemize}
\par
\medskip
\paragraph{\textbf{Connectedness of refinements}}
Next, we show that when restricted to principal part, certain connectedness property holds.
\par
\begin{prop}\label{refdifferbymutation}
Let $\AS$ be a partial triangulation in $\EGb(\colsur)$. The full subgraph
of the exchange graph~$\EGp(\surfo)$ consisting of refinements of $\AS$ is connected.
\end{prop}
\par
\begin{proof}
Without loss of generality we only need to consider the case when $\subsur$
has one connected component. Take any two refinements $\TT_1$ and $\TT_2$ of~$\AS$
in $\EGp(\surfo)$. Let $T_1, T_2$ be their images in $\EG(\surf)$ under the
forgetful map $F\colon\surfo\to\surf$. By \cite{Hatcher}, there is a flip
sequence connecting the triangulation~$T_2$ and~$T_1$ in the complement of $F(\AS)$.
Such a sequence lifts to a flip sequence of refinements of $\AS$ from $\TT_2$
to some triangulation $\TT_1'$ with the property that $F(\TT_1')=T_1=F(\TT_1)$.
Then $\TT_1$ and $\TT_1'$ differ by an element $b$ of $\BT(\surfo)$ by
Theorem~\ref{thm:EGsurfo}
since these triangulations are both in the principal component $\EGp(\surfo)$.
Moreover, $b$ preserves $\surfo\setminus\subsur$ pointwise as $\TT_1$ and $\TT_1'$ are both refinements of $\AS$.
By Lemma~\ref{lem:SBr}, we know that $b$ is actually in $\BT(\subsur)$.
By Theorem~\ref{thm:EGsurfo} again, the two triangulations of $\subsur$ induced
by $\TT_1$ and $\TT_1'$ are connected by a flip sequence that lifts to a flip
sequence from $\TT_1$ to $\TT_1'$ in the refinements of $\AS$. Composing the
two flip sequences implies the claim.
\end{proof}
\par
\begin{lemma}\label{lem:SBr}
If an element $b$ in $\BT(\surfo)$ preserves $\surfo\setminus\subsur$ pointwise,
then $b$ is actually in $\BT(\subsur)$.
\end{lemma}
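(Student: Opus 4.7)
The plan is to apply the semidirect product decomposition \eqref{eq:semiproduct} to $\subsur$ and to $\surfo$ in a compatible way, exploiting the natural inclusion $i_*\colon\SBr(\subsur)\hookrightarrow\SBr(\surfo)$ obtained by extending homeomorphisms of $\subsur$ by the identity on $\surfo\setminus\subsur$, so that the hypothesis $b\in\BT(\surfo)$ forces the $L_0$-component of $b$ to be trivial.

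Concretely, I would first represent $b$ by a homeomorphism $\beta$ of $\surfo$ fixing $\surfo\setminus\subsur$ pointwise; after a small preliminary isotopy, $\beta$ may be assumed to be the identity on an open neighborhood of $\surfo\setminus\subsur$. Since $b\in\BT(\surfo)\subseteq\SBr(\surfo)=\ker F_*$, there is an isotopy from $\beta$ to the identity in the undecorated $\MCG(\surf)$, which by isotopy-extension can be modified to fix that neighborhood pointwise; its restriction to $\subsur$ then exhibits a mapping class $b'$ of $\subsur$ (rel $\partial\subsur$ and preserving $\Delta\cap\subsur$) with $i_*(b')=b$ and $b'\in\SBr(\subsur)$. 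Next, I would choose a base decoration $Z_0\in\Delta\cap\subsur$ and construct nested disks $D'\subset D$, with $D'\subset\subsur$ containing $\Delta\cap\subsur\setminus\{Z_0\}$ and $D\subset\surfo$ containing $\Delta\setminus\{Z_0\}$; this is possible because $\surfo$ has nonempty boundary, allowing $D$ to be built from $D'$ by attaching small auxiliary disks at the remaining decorations joined by disjoint connecting arcs. With such compatible choices, $i_*$ maps $L_0(\subsur)$ into $L_0(\surfo)$ and $\BT(\subsur)$ into $\BT(\surfo)$, so it intertwines both semidirect product decompositions. Finally, applying \eqref{eq:semiproduct} to $b'=t'\ell'$ with $t'\in\BT(\subsur)$ and $\ell'\in L_0(\subsur)$ gives $b=i_*(t')\cdot i_*(\ell')$ in $\SBr(\surfo)=\BT(\surfo)\ltimes L_0(\surfo)$, and the uniqueness of this decomposition combined with $b\in\BT(\surfo)$ forces $i_*(\ell')=1$. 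Therefore $b=i_*(t')\in\BT(\subsur)$, as required.

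The main obstacle is the embedding step $b'\in\SBr(\subsur)$. A priori the isotopy trivializing $\beta$ in $\MCG(\surf)$ need not preserve $\surfo\setminus\subsur$ pointwise, and its restriction to $\subsur$ can differ from an honest trivialization of $\beta|_\subsur$ by Dehn twists along boundary components of $\subsur$. However, these boundary Dehn twists lie in $\BT(\surfo)$ (as squares of braid twists along the closed arcs enclosed by the corresponding boundary circles inside $\surfo$), so any such discrepancy is absorbed by the $\BT$-factor on both sides and does not affect the final conclusion.
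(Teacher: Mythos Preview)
Your approach is essentially the same as the paper's: both exploit compatible semidirect product decompositions \eqref{eq:semiproduct} for $\subsur$ and for $\surfo$ (with $\BT(\subsur)\subset\BT(\surfo)$ and $L_0(\subsur)\subset L_0(\surfo)$), then compare the factorization $b=b_\subsur\cdot l_\subsur$ coming from $\SBr(\subsur)$ with the trivial factorization $b=b\cdot 1$ in $\SBr(\surfo)$ to force $l_\subsur=1$. The paper is terser, simply asserting $b\in\BT(\surfo)\cap\SBr(\subsur)$ and the inclusion $L_0(\subsur)\subset L_0(\surfo)$ without the nested-disk construction you provide.

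One caveat: the claim in your final paragraph that boundary Dehn twists along $\partial\subsur$ lie in $\BT(\surfo)$ ``as squares of braid twists along the closed arcs enclosed by the corresponding boundary circles'' is not correct in general---a boundary circle of $\subsur$ may enclose many decorations (or additional topology) on the $\surfo$-side, so no single braid twist does the job. The cleaner fix is simply to note that any such boundary twist $\tau$ lies in the kernel of $i_*\colon\MCG(\Sigma)\to\MCG(\surf)$, so replacing your lift $b'$ by $b'\tau^{-1}$ yields a lift that genuinely lies in $\SBr(\subsur)$ and still satisfies $i_*(b'\tau^{-1})=b$; the rest of your argument then goes through unchanged. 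The paper does not address this wrinkle at all.
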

\begin{proof}

Since the element $b$ preserves $\surfo\setminus\subsur$ pointwise, it belongs
to $\SBr(\Sigma)$. We conclude that
$$b\in\BT(\surfo)\cap\SBr(\Sigma) \= \ker\on{AJ}_{\surfo}\mid_{\SBr(\Sigma)}
\=\ker\on{AJ}_{\Sigma} \=\BT(\Sigma)$$
by Lemma~\ref{eq:BT=kerAJ}.

%\textcolor{gray}{
%Recall that the surface braid group of~$\surfo$ has the structure of a semi-direct
%product~\eqref{eq:semiproduct}. Similarly, $\SBr(\subsur)=\BT(\subsur)
%\ltimes L_0(\subsur)$ and we can choose $L_0(\subsur)\subset L_0(\surfo)$.
%Moreover $\BT(\subsur)\subset\BT(\surfo)$ automatically.
%Since~$b$ fixes $\subsur$ pointwise, $b \in \BT(\surfo)\cap\SBr(\subsur)$.
%On the one hand, $b$ can be uniquely expressed as $b_\subsur\cdot l_\subsur$
%for $b_\subsur\in \BT(\subsur)$ and $l_\subsur\in L_0(\subsur)$, which is also the
%unique factorization of $b$ in the semi-direct product~\eqref{eq:semiproduct}.
%On the other hand, $b\cdot 1$ is another factorization of $b$. This implies
%that $b=b_\subsur$ and $l_\subsur=1$. The lemma follows.
%}
\end{proof}

\begin{rmk}
In the case of a collision, i.e., when $\subsur$ is a disk (or the disjoint
union of many disks), the exchange graph $\EG(\subsur)$ is already connected.
Then the lemma above holds automatically.
\end{rmk}
\par
We end this section with a proposition showing that there exists a refinement of
a flip of a partial triangulation in an appropriate sense.
\par
\begin{prop}\label{prop:refine_of_flip}
  Let $\AS$ be a partial triangulation in $\EGb(\colsur)$. Any forward flip
$\AS\xrightarrow{\gamma}\AS^\sharp_\gamma$ in $\EG(\colsur)$ can be refined to
a forward flip $\TT\xrightarrow{\gamma}\TT^\sharp_\gamma$ in $\EGp(\surfo)$.
That is, $\AS$ can be refined to a triangulation~$\TT$ such that
the $\gamma$-forward flip of~$\TT$ composed with forgetting the complementary
arcs is the same as the $\gamma$-forward flip in~$\AS$.
\par
The same statement holds, with 'forward flip' replaced throughout by
'backward flip'. In particular the principal part $\EGb(\colsur)$ is a union
of connected components of $\EG(\colsur)$.
\end{prop}
\par
Yet another restatement of the first statement of the proposition is
that any forward flip of an arc~$\gamma$ in a partial triangulation~$\AS$ in
$\EGb(\colsur)$ leads again to a partial triangulation in $\EGb(\colsur)$.
\par
\begin{proof}
Let $\gamma^\sharp$ be the new arc in $\AS^\sharp_\gamma$. The vertices at the
end points of~$\gamma$ and $\gamma^\sharp$ form a quadrilateral~$\mathrm{Q}$
in $\colsur$. Two of its edges are the counterclockwise adjacent edges
of~$\gamma$ in the $\AS$-polygon~$P_0$ in~$\colsur$ containing $\gamma$.
These two adjacent edges and~$\gamma$ forms two angles~$a$ and~$b$,
drawn in red in Figure~\ref{fig:refine of flip}.
The other two edges are not necessarily in $\AS^\sharp_\gamma$,
see the green dashed arcs in Figure~\ref{fig:refine of flip}.
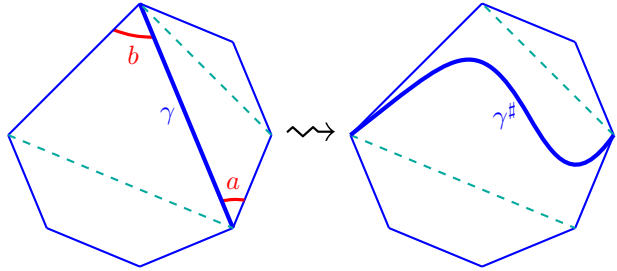
\begin{figure}[ht]\centering
	\makebox[\textwidth][c]{
		\begin{tikzpicture}[scale=.35]
			\foreach \j in {3,...,8}{\draw (90+45*\j:5) edge[blue,thick] (45+45*\j:5);}
			\foreach \j in {0,...,7}{\draw (90+45*\j:5) coordinate  (w\j) (45+45*\j:5) coordinate  (u\j);}
			\path ($(w0)!.5!(w2)$) coordinate (v1)
			($(w0)!.5!(w5)$) coordinate (v2);
        \draw[very thick,red]($(w5)!.12!(w0)$)to[bend left=10]node[above]{$a$}($(w5)!.3!(w6)$);
        \draw[very thick,red]($(w5)!.85!(w0)$)to[bend left=10]node[below]{$b$}($(w0)!.2!(w2)$);
			%\foreach \j in {0,...,7}{\draw($(u\j)!.5!(w\j)$)\nn;}
			\draw[blue,thick](w0)to(w2);
			\draw[blue,ultra thick](w5)to node[left]{$\gamma$}(w0);
			\draw[Emerald,thick,dashed](w2)to(w5)(w6)to(w0);
			\foreach \j in {0,...,7}{\draw[red](22.5+45*\j:4.8) coordinate (z\j);}
			\draw(6.5,0)node{\Huge{$\rightsquigarrow$}};
		\begin{scope}[shift={(13,0)}]
			\foreach \j in {3,...,8}{\draw (90+45*\j:5) edge[blue,thick] (45+45*\j:5);}
			\foreach \j in {0,...,7}{\draw (90+45*\j:5) coordinate  (w\j) (45+45*\j:5) coordinate  (u\j);}
			\path ($(w0)!.5!(w2)$) coordinate (v1)
			($(w0)!.5!(w5)$) coordinate (v2);
			%\foreach \j in {0,...,7}{\draw($(u\j)!.5!(w\j)$)\nn;}
			\draw[blue,thick](w0)to(w2);
			\foreach \j in {0,...,7}{\draw[red](22.5+45*\j:4.8) coordinate (z\j);}
			\draw[Emerald,thick,dashed](w2)to(w5)(w6)to(w0);
				\draw[blue,ultra thick]
				(w2).. controls +(38:5) and +(120:4) ..($(w0)!.5!(w5)$) node[left]{$\gamma^\sharp$}
				.. controls +(-60:2) and +(-125:2.5) ..(w6);
		\end{scope}
	\end{tikzpicture}}
	\caption{Refinement of a flip $\AS\xrightarrow{\gamma}\AS^\sharp_\gamma$ (collision case)}
	\label{fig:refine of flip}
\end{figure}
%We only need to refine~$\AS$ so that the corresponding~$\TT$ that contains these 'other' two edges.  (If one or both of them is already in~$\AS$, no conditions is imposed). Then $\gamma^\sharp$ will also be the new arc in $\TT^\sharp_\gamma$.
We only need to refine $\AS$ to a triangulation~$\TT$ of $\surfo$ so that the
angles~$a$ and~$b$ are not cut by the new added arcs.
If it is a collision, Figure~\ref{fig:refine of flip} shows that
by including the green dashed arcs mentioned above in the refinement, the job
is done.
\par
In general, the decoration in the $\AS$-polygon $P_0$ containing the angle~$a$
is obtained from a boundary component $\partial_0$ of $\Sigma$,
cf. Figure~\ref{fig:refine of flip+}.
Then by identifying the marked points $M_i$ in this component $\partial_0$
with the vertices of $P_0$, the angle~$a$ corresponds to some angle $a$ between segments of~$\partial_0$. As $\kappa_{ij}\ge 3$ one can refine $\AS$ (by
choosing a triangulation of $\Sigma$) so that the angle~$a$ (and similarly
for $b$) is not cut by new added arcs as required. (In fact $\kappa_{ij} \geq 2$
is enough, but for $\kappa_{ij} = 1$ the green arc in
Figure~\ref{fig:refine of flip+} might not exist.)
\begin{figure}[ht]\centering
	\makebox[\textwidth][c]{
		\begin{tikzpicture}[scale=.35]
      \draw[draw=none,fill=orange!20,opacity=.6] (90+45*5:5) -- (90+45*0:5) -- (45+45*8:5) -- (90+45*6:5);
			\foreach \j in {3,...,8}{\draw (90+45*\j:5) edge[blue,thick] (45+45*\j:5);}
			\foreach \j in {0,...,7}{\draw (90+45*\j:5) coordinate  (w\j) (45+45*\j:5) coordinate  (u\j);}
      \path ($(w0)!.5!(w2)$) coordinate (v1)
			($(w0)!.5!(w5)$) coordinate (v2);
        \draw[very thick,red]($(w5)!.12!(w0)$)to[bend left=10]node[above]{$a$}($(w5)!.3!(w6)$);
        \draw[very thick,red]($(w5)!.85!(w0)$)to[bend left=10]node[below]{$b$}($(w0)!.2!(w2)$);
			\draw[blue,thick](w0)to(w2);
			\draw[blue,ultra thick](w5)to node[left]{$\gamma$}(w0);
			\foreach \j in {0,...,7}{\draw[red](22.5+45*\j:4.8) coordinate (z\j);}
    %\draw[orange,dashed,very thick](-90+45*4:5)edge(22.5:9);
    \foreach \j in {4}\draw[orange,dashed,very thick](-90+45*\j:5)edge(13-4,4);
    \foreach \j in {1}\draw[orange,dashed,very thick](-90+45*\j:5)edge(13-4,-4);
    \foreach \j in {2}\draw[orange!50,dashed,very thick](-90+45*\j:5)edge(13+4,-4);
    \foreach \j in {3}\draw[orange!50,dashed,very thick](-90+45*\j:5)edge(13+4,+4);
    \foreach \j in {1,...,4}{\draw(-90+45*\j:5.6)node[font=\footnotesize]{$M_\j$};}
		\begin{scope}[shift={(13,0)}]
      \draw[very thick, fill=orange!20,opacity=.6](4,4)rectangle(-4,-4) (5,0)node{$\partial_0$};
            \draw[Emerald,dashed,very thick](-4,4)to[bend left=-30] (4,-4);
            \draw[red,very thick](-4,-3)to[bend left=30](-3,-4) (-3,-3)node{$a$};
    \foreach \j in {1,...,4}{\draw(135+90*\j:6.3) node[font=\footnotesize]{$M_\j$};}
    \draw[very thick] (0,0)node{$\Sigma$};
    \draw[very thick](4,4)rectangle(-4,-4);
    %\draw[thick, dashed, fill=gray!20](0,0)circle(1) (0,1.5)node[black]{$\Sigma$};
		\end{scope}
	\end{tikzpicture}}
	\caption{Refinement of a flip $\AS\xrightarrow{\gamma}\AS^\sharp_\gamma$ (in general)}
	\label{fig:refine of flip+}
\end{figure}
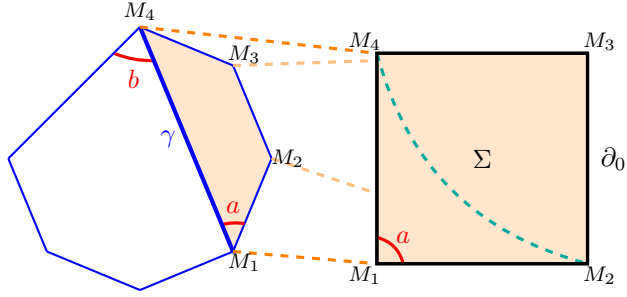
\par
The backward flip statement is proven the same way using the quadrilateral
formed by the end points of $\gamma$ and $\gamma^\flat$.
As a consequence, the graph $\EGb(\colsur)$ is $(m,m)$-regular, where
$m$ is the number of arcs in any mixed-angulation of~$\colsur$. Since we already
remarked the same statement for~$\EG(\colsur)$, the second claim of the proposition
follows.
\end{proof}
\par

%%%%%%%%%%%%%%%%%%%%%%%%%%%
\subsection{Example of non-connectedness}
%%%%%%%%%%%%%%%%%%%%%%%%%%%
We finish this section by giving an example of exchange graphs,
showing that $\EGb(\colsur)$ is not connected in general. We define
$\EG_\w(\ol{\surf})$ to be the exchange graph of partial triangulations of the
undecorated collapsed surface $\ol{\surf}$. This graph is easy to draw if the
mapping class group of~$\ol{\surf}$ is finite and captures some connectivity
information of the principal part for the following reason:
\par
\begin{lemma}\label{lem:conn}
The forgetful map $F\colon\EG^\bullet(\colsur)/\SBr(\colsur)\to\EG_\w(\ol{\surf})$ is
surjective and hence an isomorphism. As a result, if $\EG_\w(\ol{\surf})$ is not
connected, neither is $\EGb(\colsur)$.
\end{lemma}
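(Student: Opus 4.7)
The plan is to express $F$ as part of a commutative square (on vertex sets) relating it to the forgetful map for simply-decorated surfaces provided by Theorem~\ref{thm:EGsurfo}, thereby reducing surjectivity to two easier statements, and then to check injectivity and edge compatibility directly. Well-definedness of $F$ on the quotient is straightforward: $\SBr(\colsur)=\ker(\MCG(\colsur)\to\MCG(\ol{\surf}))$ acts trivially on $\ol{\surf}$, and it preserves $\EGb(\colsur)$ because each generator lifts, via the subsurface collapse, to an element of $\BT(\surfo)$, which preserves $\EGp(\surfo)$.

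For surjectivity, I would consider the commutative square (of vertex-set maps)
\begin{equation*}
\begin{tikzcd}
\EGp(\surfo) \ar[r,"c"] \ar[d,"F_*"'] & \EGb(\colsur) \ar[d,"F"] \\
\EG(\surf) \ar[r,"\bar c"'] & \EG_\w(\ol{\surf})
\end{tikzcd}
\end{equation*}
where the horizontal arrows $c$ and $\bar c$ collapse the subsurface $\subsur$ by keeping only the arcs outside $\subsur$, and the vertical arrows forget the decorations. By the very definition of the principal part, $c$ lands in $\EGb(\colsur)$. Theorem~\ref{thm:EGsurfo} gives surjectivity of $F_*$ restricted to $\EGp(\surfo)$, while $\bar c$ is surjective because any $\w$-mixed-angulation on $\ol{\surf}$ pulls back to an arc system on $\surf\setminus\subsur$, which can be completed to a triangulation of $\surf$ by an arbitrary triangulation of $\subsur$ compatible with its $\kappa_{ij}$ marked boundary points. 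Commutativity $F\circ c=\bar c\circ F_*$ then forces $F$ to be surjective.

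Injectivity will follow from the observation that if $\AS_1,\AS_2\in\EGb(\colsur)$ have the same image in $\EG_\w(\ol{\surf})$, any isotopy between their underlying arc systems on $\ol{\surf}$ lifts to an element of $\MCG(\colsur)$ with trivial image in $\MCG(\ol{\surf})$, which by definition lies in $\SBr(\colsur)$ and sends $\AS_1$ to $\AS_2$. Edge compatibility is direct: a forward flip in $\EGb(\colsur)$ changes exactly one arc (disjoint from decorations), whose projection to $\ol{\surf}$ is the corresponding forward flip there; conversely every flip on $\ol{\surf}$ lifts to a flip on $\colsur$ unique up to $\SBr(\colsur)$. Together with surjectivity this yields the isomorphism of oriented graphs.

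The non-connectedness consequence is then formal: since $\SBr(\colsur)$ acts on $\EGb(\colsur)$ by graph automorphisms, the quotient $\EGb(\colsur)/\SBr(\colsur)\cong\EG_\w(\ol{\surf})$ is connected whenever $\EGb(\colsur)$ is, so disconnectedness of the former implies disconnectedness of the latter. The main subtlety in the whole argument is the invariance of $\EGb(\colsur)$ under $\SBr(\colsur)$, which rests on lifting the braid-twist and point-pushing generators of $\SBr(\colsur)$ to $\BT(\surfo)$ through the subsurface collapse; all remaining ingredients are either standard or already contained in Theorem~\ref{thm:EGsurfo} and Proposition~\ref{refdifferbymutation}.
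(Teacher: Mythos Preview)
Your surjectivity argument is essentially the paper's: refine $\ul{\AS}$ to a triangulation $T$ of $\surf$, lift $T$ to $\TT\in\EGp(\surfo)$ via Theorem~\ref{thm:EGsurfo} (the paper invokes Hatcher for connectedness of $\EG(\surf)$ at this step), then collapse to obtain $\AS\in\EGb(\colsur)$ with $F(\AS)=\ul{\AS}$. One stylistic caveat: the maps $c$ and $\bar c$ in your square are only partially defined, since not every triangulation of $\surfo$ or $\surf$ is compatible with the subsurface $\Sigma$. This is harmless for the argument because you only evaluate them on the specific $T,\TT$ built by refinement, but the blanket phrasing ``surjectivity of $F_*$ plus surjectivity of $\bar c$ forces surjectivity of $F$'' glosses over the domain issue.

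The genuine gap is your claim that each generator of $\SBr(\colsur)$ lifts through the collapse to an element of $\BT(\surfo)$. A point-pushing generator (moving a decoration $\overline{Z}_i$ around a loop) lifts to pushing the entire subsurface component $\Sigma_i$ around that loop in $\surfo$; this lies in $\SBr(\surfo)$ but typically has nontrivial $L_0(\surfo)$-component in the decomposition~\eqref{eq:semiproduct}, hence is \emph{not} in $\BT(\surfo)$. Worse, a braid twist in $\colsur$ swapping two equal-weight decorations need not lift to $\MCG(\surfo)$ at all when the corresponding components of $\Sigma$ are not homeomorphic --- this is exactly the obstruction to liftability discussed in Section~\ref{sec:symmetrygroups}. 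So your argument for $\SBr(\colsur)$-invariance of $\EGb(\colsur)$ does not go through. Fortunately neither the paper's proof nor the disconnectedness application requires this invariance: the paper proves only that the forgetful map $\EGb(\colsur)\to\EG_\w(\ol\surf)$ itself is surjective, and the consequence follows because a surjective graph map from a connected graph has connected image.
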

\begin{proof}
Given any partial triangulation $\ul{\AS}$ in $\EG_\w(\ol{\surf})$, one can
refine it to a triangulations $\T$ of $\surf$. By \cite{Hatcher}, the exchange
graph $\EG(\surf)$
of the undecorated (non-collapsed) surface with simple weights~$\surf$ is connected
and thus $\T\in\EG(\surf)$ lifts to a triangulation~$\TT$ in the principal
component~$\EGp(\surfo)$ with $F(\TT)=\T$. Restricting~$\TT$ back to $\colsur$, we
obtain a partial triangulation $\AS\in\EGb(\colsur)$ with $F(\AS)=\ul{\AS}$.
\end{proof}
\par
\par
%%%%%%%%%%%%%%%%%%%%%%%%%%%%%%%%%%%%%%%%%%%%%%%%%%%%%%%%%%%%%%%%%
\begin{example} \label{torusexample}
Let $\colsur$ be a torus with one boundary component $\partial$ and one
decoration with weight $\w=3$.
Then $\EG_\w(\ol{\surf})$ and hence $\EGb(\colsur)$ are not connected.
\end{example}
\begin{proof}
Let $\ol{\surf}$ be the undecorated torus with boundary circle~$\partial$.
We identify a fundamental domain of the universal cover~$\ol{\surf}$ with the unit
square in $\mathbb{R}^2$ with~$\partial$ being a (real) bubble at the corner of first
quadrant. The first homology of this surface is simply $H_1(\ol{\surf})=\mathbb{Z}^2$.
We denote by $D_{p,q}$ the Dehn twist along an oriented
simple closed curve $C_{p,q}$ with homology class $H_1(C_{p,q})=(p,q)$
for $(p,q)\in\mathbb{Z}^2$ satisfying $\gcd(p,q)=1$.
The mapping class group of $\ol{\surf}$ is  the group
$$\MCG(\ol{\surf})\=\<X,Y\>/(XYX-YXY)\,\cong\,\Br_3\,,$$ generated by
$X=D_{1,0}$ and $Y=D_{0,1}$. Note that the Dehn twist $D_\partial:=(XY)^6$ is
in the center of $\MCG(\ol{\surf})$.
\par
A partial triangulation $\ul{\AS}$ of $\EG_\w(\ol{\surf})$ in this case is
just a $\w$-mixed-angulation, a pentagon with edges $\gamma_h$, $\gamma_v$,
$\partial$, such that glueing edges different from~$\partial$ yields a torus.
We oriented them so that $\harc,\varc,-\harc,-\varc$ are in anticlockwise
order,  cf.~Figure~\ref{fig:example}.
\begin{figure}[ht]
\begin{tikzpicture}[scale=.5,arrow/.style={->,>=stealth}]
\draw[green,opacity=.2,ultra thick, fill=green!15](0,0)rectangle(4,4);
\draw[arrow,cyan,very thick](0,0)to node[below]{$\gamma_h$}(4,0);
\draw[arrow,blue,very thick](0,0)to node[left]{$\gamma_v$}(0,4);
\draw[arrow,cyan,very thick](0,4)to node[above]{$\gamma_h$}(4,4);
\draw[arrow,blue,very thick](4,0)to node[right]{$\gamma_v$}(4,4);
\draw[fill=white, thick](0,0).. controls +(15:2) and +(75:2) ..(0,0)\nn node[below left]{$(0,0)$}
    (0,4)\nn node[left]{$(r,s)$} (4,4)\nn (4,0)\nn node[below]{$(p,q)$};
\draw[arrow](5,2) to (7,2);\draw(6,2)node[above]{$\mu^\sharp_{\gamma_h}$} ;
\draw[arrow](-3,2) to (-1,2);\draw(-2,2)node[above]{$\mu^\sharp_{\gamma_h^\flat}$} ;
\begin{scope}[xscale=1,shift={(8,0)}]
\draw[green,opacity=.2,ultra thick, fill=green!15](0,0)rectangle(4,4);
\draw[arrow,blue,very thick](0,0)to node[left]{$\gamma_v$}(0,4);
\draw[arrow,blue,very thick](4,0)to node[right]{$\gamma_v$}(4,4);
\draw[arrow,cyan,very thick](2,0)to(4,4);\draw[cyan, very thick](0,0)to(2,4) (2,2)node{$\gamma_h^\sharp$};
\draw[fill=white, thick](0,0).. controls +(5:2) and +(65:2) ..(0,0)\nn
    (0,4)\nn(4,4)\nn(4,0)\nn;
\end{scope}
\begin{scope}[xscale=1,shift={(-8,0)}]
\draw[green,opacity=.2,ultra thick, fill=green!15](0,0)rectangle(4,4);
\draw[arrow,blue,very thick](0,0)to node[left]{$\gamma_v$}(0,4);
\draw[arrow,blue,very thick](4,0)to node[right]{$\gamma_v$}(4,4);
\draw[cyan,very thick](0,0).. controls +(85:1.5) and +(120:1.5) ..(1.5,0);
\draw[arrow,cyan,very thick](1.5,4)to node[left]{$\gamma_h^\flat$}(4,0);
\draw[fill=white, thick](0,0).. controls +(15:1.5) and +(75:1.5) ..(0,0)\nn
    (0,4)\nn(4,4)\nn(4,0)\nn;
\end{scope}
\end{tikzpicture}
\caption{The forward flips of the pentagon on torus}
\label{fig:example}\end{figure}
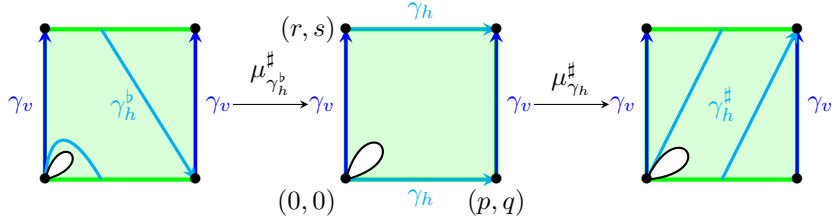
\par
To show non-connectivity of $\EG_\w(\ol{\surf})$ we coarsify the datum given by a
mixed-angulation and find an invariant. First, up to composition with an element in the normal subgroup
generated by~$D_\partial$, a mixed-angulation $\ul{\AS}$ is determined by a 2-by-2
matrix with rows $\ora{h}=H_1(\harc)$ and $\ora{v}=H_1(\varc)$, together with the
location (at which of the four corners) of the boundary~$\partial$.
We represent the position by boxing the corresponding element in the matrix,
called the~\emph{bubble}.
With our orientation conventions for the arcs,
each mixed-angulation is coarsely represented by one of the following four
matrices:
\be\label{eq:bubble}
    %\ul{\AS}\cong
    \begin{pmatrix}p&q\\\boxed{r}&s\end{pmatrix}\cong
    \begin{pmatrix}\boxed{r}&s\\-p&-q\end{pmatrix}\cong
    \begin{pmatrix}-p&\boxed{-q}\\-r&-s\end{pmatrix}\cong
    \begin{pmatrix}-r&-s\\p&\boxed{q}\end{pmatrix}.
\ee
We shall use, as the normal form, that the bubble is in the left bottom corner.
In such a form the matrix is $\begin{pmatrix}\ora{h}\\\ora{v}\end{pmatrix}=\begin{pmatrix}p&q\\\boxed{r}&s\end{pmatrix}$. Second, we coarsify by considering all matrix entries modulo $3\Z$, and we show that $\ora{h}+\ora{v}\in(\mathbb{Z}_3)^2$ is constant on
each connected component.
\par
The flips in Figure~\ref{fig:example} can be represented as
\[
    \begin{pmatrix}\boxed{p-r}&q-s\\r&s\end{pmatrix}
    \xrightarrow[M_X\cdot]{\mu^\sharp_{\gamma_h^\flat}}
    \begin{pmatrix}p&q\\\boxed{r}&s\end{pmatrix}
    \xrightarrow[M_X^2\cdot]{\mu^\sharp_{\gamma_h}}
    \begin{pmatrix}\boxed{p+2r}&q+2s\\r&s\end{pmatrix},
\]
where, on the level of matrixes, the two flips (i.e., the forward flip at~$\gamma_h^\flat$, resp.\ at~$\gamma_h$) to, resp.\ starting at,~$\ul{\AS}$ are
represented by multiplying on the left by $M_X=\left(\begin{smallmatrix}1&1\\0&1\end{smallmatrix}
\right)$ and $M_X^2$ respectively. Changing to the normal form, we have
\[
    \begin{pmatrix}-r&-s\\\boxed{p-r}&q-s\end{pmatrix}
    \xrightarrow[M_X'\cdot]{\mu^\sharp_{\gamma_h^\flat}}
    \begin{pmatrix}p&q\\\boxed{r}&s\end{pmatrix}
    \xrightarrow[M_X^{''}\cdot]{\mu^\sharp_{\gamma_h}}
    \begin{pmatrix}-r&-s\\\boxed{p+2r}&q+2s\end{pmatrix},
\]
for $M_X'=\left(\begin{smallmatrix}-1&1\\-1&0\end{smallmatrix}\right)$
and $M_X^{''}=\left(\begin{smallmatrix}0&-1\\1&2\end{smallmatrix}\right)$.
It is now straightforward to check that $\ora{h}+\ora{v}\in(\mathbb{Z}_3)^2$
remains unchanged.
Similarly, the other two flips to/at~$\ul{\AS}$ can be represented as
\[
    \begin{pmatrix}r-p&s-q\\\boxed{-p}&-q\end{pmatrix}
    \xrightarrow[M_Y'\cdot]{\mu^\sharp_{\gamma_v^\flat}}
    \begin{pmatrix}p&q\\\boxed{r}&s\end{pmatrix}
    \xrightarrow[M_Y^{''}\cdot]{\mu^\sharp_{\gamma_v}}
    \begin{pmatrix}r+2p&s+2q\\\boxed{-p}&-q\end{pmatrix}
\]
for $M_Y'=\left(\begin{smallmatrix}0&-1\\1&-1\end{smallmatrix}\right)$
and $M_Y^{''}=\left(\begin{smallmatrix}2&1\\-1&0\end{smallmatrix}\right)$.
The row $\ora{h}+\ora{v}\in(\mathbb{Z}_3)^2$ is preserved again.
\par
Using $\ora{h}+\ora{v}\in(\mathbb{Z}_3)^2$ as an invariant for the selected
connected component of the exchange graph, we see that the mixed-angulations
$\begin{pmatrix}1&0\\\boxed{0}&1\end{pmatrix}$
and $\begin{pmatrix}1&1\\\boxed{0}&1\end{pmatrix}$
are not in the same connected component of $\EG_\w(\ol{\surf})$.
\end{proof}

% !TeX root    = main_GStab_merge.tex
%%%%%%%%%%%%%%%%%%%%%%%%%%%%%%%%%%

%=========================================================
\section{Categorification of collapses}\label{sec:cat sub/col}
%=========================================================

In this section, we categorify the constructions in Section~\ref{sec_subsurface},
by associating with a collapse a quotient category. For computations
it is convenient to express this quotient category in terms of triangulations.
The main point of this section is to analyse a subset of hearts of bounded t-structures of
the quotient category that we call of quotient type with respect to the subcategory
that has been collapsed. The leads to a notion of exchange graphs of
these quotient type hearts. The goal of this section, Theorem~\ref{thm:EGiso}, is
to show that the principal part of this exchange graph agrees with the
principal part of the exchange of partial triangulations we introduced previously.

%========================
\subsection{The quotient categories associated to collapsed surfaces}
\label{sec:QuotCat}
%========================

We have been associating in Section~\ref{sec:MS} a  $\CY_3$-category
$\pvd(\Gamma_\TT)$ to a triangulation~$\TT$ of a wDMS~$\surfo$ with simple weights.
Theorem~A1 in \cite{BQZ} shows that this category~$\pvd(\Gamma_\TT)$ is in fact canonically
associated with~$\surfo$, i.e.\ the derived equivalences given by Proposition~\ref{prop:mutationiso}
can be identified consistently. We call it $\Dsan$. Thus the inclusion
$\Sigma\subset\surfo$ in~\eqref{eq:ses-surf}, together with the discussion in
\ref{subsec_quiver3cat} and \ref{sec:MS}, induces
a short exact sequence of triangulated categories:
\be\label{eq:ses.cats}
\begin{tikzcd}
    0 \ar[r] & \D_3(\subsur) \ar[r] & \Dsan \ar[r] & \D(\colsur) \ar[r] & 0.
\end{tikzcd}
\ee
Equivalently, we define the category $\D(\colsur)$ as the Verdier quotient
$\Dsan/\D_3(\subsur)$.
We will now give a more concrete construction of $\D(\colsur)$ by choosing (partial)
triangulations  and show that it is indeed independent of the choices.
\par
\medskip
\paragraph{\textbf{Triangulation of subsurfaces}}
If~$\TT$ is any triangulation of~$\surfo$, we can homotope the arcs to pass
each through one of the marked points on $\partial \Sigma$. In this way,
the subsurface inherits a triangulation~$\TT|_\Sigma$. This triangulation
is obviously a refinement of the mixed-angulation~$\AS$ obtained by forgetting the
edges in~$\TT|_\Sigma$ and collapsing to~$\colsur$.
\par
This defines an inclusion of triangulated categories
$\pvd(\TT|_\Sigma) \to \pvd(\TT)$. Any other refinement of~$\AS$ differs
from~$\TT$ by a sequence of flips, see Proposition~\ref{refdifferbymutation},
i.e.\ of mutations in the vertices of the corresponding subquiver.
Then the quotient category $\D(\colsur)$ can be realized as
$\pvd(\TT)/\pvd(\TT|_\Sigma)$. The independence of~$\D(\colsur)$ of the chosen
refinement is given by the following proposition.
\par
\begin{prop}\label{quotQ}
Let $I\subseteq Q_0=\{1,\dots, n\}$ be a non-empty subset, and $\mu$ be a
sequence of mutations at vertices $k_j\in I$ (possibly repeated), then we have the following
equivalence
\[\pvd(Q,W)/\pvd\big((Q,W)_I\big) \,\simeq \, \pvd\big(\mu (Q,W)\big)/\pvd\big((\mu (Q, W))_I\big)\]
of quotient triangulated categories.
\end{prop}
\begin{proof}
Let $I^{(0)}=I\subset Q_0=\{1,\dots, n\}$ be a non-empty subset. For simplicity we omit the potentials in the proof. Denote the
quiver~$Q$ as~$Q^{(0)}$. For $j\geq 0$, let $Q^{(j+1)}=\mu_{k_j}Q^{(j)}$, for
some $k_j\in Q_0^{(j)}$. We know that $(Q^{(j)},W^{(j)})$ are all right-equivalent
and their associated triangulated categories~$\pvd({Q^{(j)})}$ are equivalent,
see \cite{KY}. The equivalence also holds for $\pvd({Q^{(j)}}_{|I})$
and $\pvd({\mu_{k_j}(Q^{(j)}}_{|I}))$.
By the mutation-restriction compatibility, the latter is the same
as $\pvd({(\mu_{k_j}Q^{(j)})_{|I}})$ and the equivalence in the bottom level of the following diagram is compatible with the one above
\[\xymatrix{
	\pvd(Q^{(j)}) \ar@{-}[rr]^\simeq_{\text{\cite{KY}}} \ar@{}[d]^\bigcup && \pvd\left(Q^{(j+1)}\right)\ar@{}[d]^\bigcup\\
	\pvd\left({Q^{(j)}}_{|I}\right) \ar@{-}[r]^\simeq_{\text{\cite{KY}}} & \pvd\left(\mu_{k_j}({Q^{(j)}}_{|I})\right)\ar@{-}[r]^\simeq_{\text{\cite{LabaFragQP}}} & \pvd\left({Q^{(j+1)}}_{|I}\right)
}\]
The diagram is therefore commutative. For the statement to follow, is then enough to know that the Ginzburg category associated to a sub-quiver is a thick triangulated sub-category of the Ginzburg category attached to the whole quiver, hence the quotient is well-defined.
\end{proof}
\par

%=========================================================
\subsection{Partial triangulations induce hearts of quotient type}
\label{sec:MAquotheart}
%=========================================================
Now we focus on hearts of bounded t-structures in the quotient category $\D(\colsur)$. We will restrict
our attention to certain hearts that we call of quotient type. We show that
partial triangulation induce hearts of quotient type through the choice of a
refinement. We start with a general fact.
\par
\begin{prop}[{\cite[Proposition~2.20]{antieau}}]\label{prop_antieau}
Let $i:\cl C \to \cl D$ be a t-exact fully faithful functor of triangulated
 categories equipped with bounded t-structures,
with a well-defined quotient functor $j:\cl D \to \cl D/ \cl C$.
Let $\h_\D$ and $\h_\calC$ be the two hearts in $\D$ and $\calD$ respectively.
Then the following are equivalent
	\begin{itemize}
		\item[a)]the essential image $i(\h_\calC)\subset \h_\D$ is a Serre subcategory, and
		\item[b)] the quotient $\calD/\cl C$ has a bounded t-structure such that $j$ is t-exact, whose
		heart is equivalent to $\h_\D/\h_\calC$.
	\end{itemize}
\end{prop}
The (bounded) t-structure corresponding to
$\h_\D/\h_\calC$ in $\calD/\cl C$ of point b) is described in
{\cite[Proposition~2.20]{antieau}}.
\par
\begin{definition}\label{def_induced_heart}
If a heart on a quotient triangulated category arises as described by {\cite[Proposition 2.20]{antieau}},
we say that it is of \emph{quotient type}.
We say moreover that it is \emph{induced} by the hearts in $\calD$ and $\cl C$, or \emph{induced by the heart on $\calD$},
	if that heart on $\cl C$ is obtained by restriction.
\end{definition}
Note that, a priori, a triangulated category $\cal D/\cl C$ may have
many more hearts. The next definition encodes the key restriction on the pairs
of hearts and subcategories we consider. The reader may compare with
\cite[Section~3]{BPPW} for other criteria for hearts (or slicings) to descend
to quotient categories or to be lifted from there.
\par
\begin{definition}Let $\cl V$ be a full thick triangulated subcategory of $\calD$, and $\calH$ a heart of $\calD$.
We say that $\calH$ is \emph{$\cl V$-compatible} if $\calH\cap \cl V$ is a heart of $\cl V$ and it is a Serre full subcategory of $\calH$.
\end{definition}
\par
We now return to our case of interest, i.e., $\calD = \Dsan$ and
$\calV =\Dsub$, for a choice of a collapse $\nu$. We denote by $\pi_\nu$ the quotient functor
\be\label{pi_functor}\pi_\nu:\D\twoheadrightarrow\D/\calV,\ee
and always consider its essential images.
\par
\begin{prop} \label{equal_quot_heart}
Let $\TT$ be any refinement of a partial triangulation~$\AS$ of $\colsur$. Then the
canonical heart $\calH = \calH(\Gamma_\TT)$ is $\calV$-compatible.
Moreover, the quotient heart $\ol{\calH} \simeq \calH/(\calH \cap \calV)$ is independent of the choice of the refinement.
\end{prop}
\par
\begin{proof}
The first statement follows from the fact that
$\h$ is finite and $\h\cap\calV$ is also finite, generated by the simples corresponding to arcs in $\TT\setminus\AS$.
\par
The second statement follows from combining Proposition~\ref{refdifferbymutation}
and Lemma~\ref{cor: equal_quot_heart} (below). More precisely, there is a
sequence of flips/mutation connecting different refinements~$\TT_1$ and~$\TT_2$
of the partial triangulation $\AS$ by Proposition~\ref{refdifferbymutation}
which gives the same quotient hearts of $\calH(Q_{\TT_1})$ and $\calH(Q_{\TT_2})$
by Lemma~\ref{cor: equal_quot_heart}.
\end{proof}
\par
Let $I\subseteq Q_0=\{1,\ldots,n\}$ be a non-empty subset, and
$\mathbf{i}=(i_l^{\epsilon_l},\ldots,i_1^{\epsilon_1})$ be an ordered sequence with $i_j\in I$ and $\epsilon_j\in\{\sharp,\flat\}$.
By the simple tilting formula \eqref{newsimples},
the sequence $\mathbf{i}$ induces a sequence of simple tiltings
\[
    \mu_{\mathbf{i}}\h=\mu_{i_l}^{\epsilon_l}\cdots \mu_{i_1}^{\epsilon_1} \h
\]
for any heart $\h$ whose simples are parameterized by $Q_0$.

\begin{lemma}\label{cor: equal_quot_heart}
The quotient heart is an invariant under simple tilting in $I$, in the sense that for any  $\mathbf{i}=(i_l^{\epsilon_l},\ldots,i_1^{\epsilon_1})$ as  above
\be \label{quotheartsundermutation}
\calH(Q,W)/\calH\left((Q,W)_{|I}\right)
	\= \mu_\mathbf{i}  \calH\left(Q,W\right)  /  \mu_\mathbf{i}\calH\left((Q,W)_{|I}\right).
\ee
\end{lemma}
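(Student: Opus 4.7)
The plan is to reduce the statement to a single simple tilting at some $i \in I$ and argue by induction on the length~$l$ of $\mathbf{i}$. The inductive step needs the new heart to remain $\calV$-compatible (with $\calV=\pvd((Q,W)_I)$) and the identity $(\mu_i^{\epsilon}\calH) \cap \calV = \mu_i^{\epsilon}(\calH \cap \calV)$; once these hold, Lemma~\ref{lem_mures}, which gives $\mu_i((Q,W)_I) = (\mu_i(Q,W))_I$ for $i \in I$, lets the right-hand side of~\eqref{quotheartsundermutation} be reinterpreted as the canonical-heart quotient of the mutated quiver and its subquiver, so the inductive hypothesis applies to the remaining tail of $\mathbf{i}$.

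For a forward tilt $\mu_S^\sharp$ with $S = S_i$, $i \in I$, formula~\eqref{newsimples} gives $\Sim\bigl(\mu_S^\sharp\calH(Q,W)\bigr)=\{S[1]\}\cup\{\psi^\sharp_S(S_j) : j\in Q_0,\, j \neq i\}$. Those with $j \in I$ lie in $\calV$ because the short exact sequence
\[
0\to \Ext^1(S_j,S)^*\otimes S \to \psi^\sharp_S(S_j)\to S_j \to 0
\]
has both end terms in $\calV$; those with $j \notin I$ do not, since $S_j \notin \calV$ (while $\Ext^1(S_j,S)^*\otimes S \in\calV$) and $\calV$ is triangulated. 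This identifies $(\mu_S^\sharp\calH(Q,W))\cap\calV$ with $\mu_S^\sharp\calH((Q,W)_I)$ as the subcategory of $\mu_S^\sharp\calH(Q,W)$ whose Jordan-H\"older factors lie in a specified subset of simples, which is automatically Serre. Hence $\mu_S^\sharp\calH(Q,W)$ is $\calV$-compatible.

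With Proposition~\ref{prop_antieau} applied to both $(\calH(Q,W),\calH((Q,W)_I))$ and its tilt, the two sides of~\eqref{quotheartsundermutation} are genuine hearts of bounded t-structures on $\calD/\calV$. To see they coincide, apply $\pi_\nu$ to the same short exact sequence: since $S\in\calV$, we obtain $\pi_\nu(\psi^\sharp_S(S_j))\cong\pi_\nu(S_j)$, so both hearts have the same simples $\{\pi_\nu(S_j):j\notin I\}$ in $\calD/\calV$, and two finite hearts with matching simples agree. The backward tilt $\mu_S^\flat$ is handled symmetrically via its analogous simple-tilt formula (with $S[-1]$ in place of $S[1]$ and the roles of sub and quotient reversed). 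The main obstacle is the verification of $\calV$-compatibility of the tilted heart; after that, the equality of quotient hearts reduces to the clean comparison of simples in $\calD/\calV$.
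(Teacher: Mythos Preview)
Your argument is correct, and it is structurally parallel to the paper's (reduce to a single simple tilt, then verify the two quotient hearts coincide), but the single-step argument is genuinely different. The paper packages the single step into an abstract statement, Lemma~\ref{quot_tilted_hearts}: if two $\calV$-compatible hearts differ by a tilt at a torsion class $\torsion\subset\calV$, then the quotient $t$-structures coincide, proved by showing $\pi(\calH)\simeq\pi(\torsionfree)\simeq\pi(\calH')$ via the torsion-pair decomposition. You instead work entirely with simples, using formula~\eqref{newsimples} to identify $\Sim(\mu_S^\sharp\calH)$, checking which lie in $\calV$, and then observing that $\pi_\nu(\psi^\sharp_S(S_j))\cong\pi_\nu(S_j)$ forces the two finite quotient hearts to have the same simples and hence to coincide. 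Your route is more explicit and self-contained; the paper's is more general (it covers tilting at any torsion class contained in~$\calV$, not only simple ones) and reusable.

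One remark on your induction: routing through Lemma~\ref{lem_mures} to reinterpret the tilted heart as the canonical heart of the mutated quiver works, but it implicitly uses the Keller--Yang identification $\mu_{S_i}^{\epsilon}\calH(Q,W)\cong\calH(\mu_i(Q,W))$ under the derived equivalence of Proposition~\ref{prop:mutationiso}. This is true, but you could bypass it entirely: your single-step argument only uses that the heart is finite, rigid, $\calV$-compatible, and that its $I$-indexed simples are exactly $\Sim\calH\cap\calV$. You have just shown these properties persist after one tilt, so the same argument applies verbatim to $\mu_{S_{i_1}}^{\epsilon_1}\calH$ and one can iterate directly, as the paper does.
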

\begin{proof} We only consider the case of a single mutation corresponding to a simple
tilt, i.e. $\mathbf{i}=i^\epsilon$ for $i\in I$. Possibly repeating the argument
then proves the statement. Let $\Sim\h(Q,W)=\{S_k\mid k\in Q_0\}$.
Take $\D=\pvd(Q,W)$ and $\cl V = \pvd(Q,W)_I$
together with the
hearts $\h=\h(Q,W)$ and $\h'=\mu^\epsilon_{S_i}\h$.
Applying Lemma~\ref{quot_tilted_hearts} below we obtain the lemma.
\end{proof}
\par
\begin{lem}\label{quot_tilted_hearts}
  Suppose that $\calH,\calH'$ are $\cl V$-compatible hearts of bounded t-structures
  in~$\calD$. Then the bounded t-structures induced on the quotient $\calD/\calV$
  by a twice t-exact fully faithful functor $\iota:\cl V\to \calD$,
	\begin{align*}
		&\left(\calV, \calH\cap\calV\right)\to \left(\calD, \calH\right) \text{ and}\\
		&\left(\calV,\calH'\cap\calV\right)\to \left(\calD, \calH'\right),
	\end{align*}
	coincide if $\calH'=\mu_{\torsion}\calH$ at some torsion class $\torsion\subset \calH$ such that $\torsion\subset \calV$.
\end{lem}
An analogous statement holds, and can be proven similarly, for $\calH'=\Phi_S\calH$ for $S\in\cl V$, where $\Phi_S$ is the spherical twist at the simple $S\in\calH$.
\par
\begin{proof} Let $\torsionfree:= \torsion^\perp$ in $\calH$ and $\calH'=\torsionfree\perp\torsion[-1]$ be the $\torsion$-tilted heart in $\calD$.  We consider the diagram
	\[\xymatrix{\h_\calV=\calV\cap\calH \ar[r] & \calH\ar[r] & \calH/\h_\calV \\
		\cl V \ar[r]^\iota\ar@{}[u] |{\bigcap}\ar@{}[d] |{\bigcup} & \calD \ar[r]^\pi\ar@{}[u] |{\bigcap}\ar@{}[d] |{\bigcup} & \calD/\cl V \\
		\h'_\calV=\calV\cap\calH' \ar[r] & \calH' \ar[r] & \calH'/\h'_\calV
	}
	\]
	Serre-ness of $\h_\calV\subset\calH$, ${\h_\calV}'\subset \calH'$
	is equivalent to $\pi$ being $\calH$- and $\calH'$-exact,
	and $\calD/\cl V$ is endowed with bounded t-structures with hearts
	$\pi(\h)$, $\pi(\h')$ 	(Proposition \ref{prop_antieau}).
	By hypothesis, $\torsion\subset \h_\calV,\torsion[-1]\subset {\h_\calV}'$ are in the kernel of the quotient functor. Moreover, by definition of a torsion pair, for any $E\in\calH$, there are $T\in\torsion$, $F\in \torsionfree$, and a short exact sequence
	\[0\to T \to E \to F \to 0,
	\]
	that yields to a short exact sequence in the quotient $\pi(\h$)
	\[ 0\to \pi(T)\simeq 0\to \pi(E)\to \pi(F)\to 0.\]
	This means that for any $\pi(E)\in\pi(\calH)$ there is
	$\pi(F)\in\pi(\torsionfree)$ such that $\pi(E)\simeq \pi(F)$ in
	$\pi(\calH)$. Similarly, for any $E'\in \calH'$, there is
	$G\in\torsionfree$ such that $\pi(E')\simeq \pi(G)$ in
	$\pi(\h')$.
	Hence the fully faithful functor $\pi(\torsionfree)\to \pi(\calH)$
	is also essentially surjective. Therefore, $\pi(\torsionfree)\simeq \pi(\calH)$, and similarly $\pi(\torsionfree)\simeq \pi(\calH')$. We conclude that the bounded t-structures on the quotient $\calD/\cl V$ induced by $\calH$ and $\calH'$ coincide.
\end{proof}

%=========================================================
\subsection{The exchange graphs of hearts of quotient type and its principal
part} \label{sec:EGquotheart}
%=========================================================

Consider the exchange graph $\EG(\D(\colsur))$ of the quotient category.
We want to relate this exchange
graph to the exchange graph of~$\D(\surf_\Delta)$. We define the \emph{principal
part} $\EGb(\D(\colsur))$ to be full subgraph of~$\EG(\D(\colsur))$
whose vertices can be realized as quotients of $\calV$-compatible hearts~$\h \in
\EGp(\D(\surf_\Delta))$ in a fixed connected component. As in the topological
situation in Section~\ref{sec:refprinc}, it is a priori not clear that
the principal part consists of connected components of $\EG(\D(\colsur))$.
The next proposition prepares to show that this is indeed the case, namely
that simple tilts of quotient hearts stem from simple tilts of~$\h$,
if the heart~$\h$ is conveniently
chosen in terms of some $\Ext$-condition:
\par
\begin{prop}\label{prop:ind.tilting}
Suppose that $\h$ is finite rigid heart in $\D$ with an abelian subcategory~$\calK$
such that $\Sim\calK\subset\Sim\h$. Denote by $\calV=\thick(\calK)$.
Let $S\in\Sim\h\setminus\Sim\calK$ satisfying $\Ext^1(\calK, S)=0$.
Then the simple tilting $\h\xrightarrow{S}\h^\sharp_S$ in $\D$ induces a simple
tilting of quotient hearts
\[
    \ol{\h}\xrightarrow{\ol{S}}\ol{\h}^\sharp_{\ol{S}} = \ol{\h^\sharp_S}
\]
in $\D/\calV$, where $\ol{?}$ is the essential image of $?$ under $\D\twoheadrightarrow\D/\calV$.
\end{prop}
\begin{proof}
As $\h$ and $\calK$ are both abelian and finite,
$\calK$ is Serre in $\h$ and
the image of any simple $T$ in $\Sim\h\setminus\Sim\calK$ is a simple $\ol{T}$ in $\h/\calK$.
By the simple tilting formula \eqref{newsimples},
simples in $\Sim\calK$ remains in $\Sim\h^\sharp_S$ by the $\Ext^1$-vanishing property of $S$.
Thus, $\calK$ is also an abelian Serre subcategory of $\h^\sharp_S$.
By Proposition~\ref{prop_antieau}, the hearts $\h$ and $\h^\sharp_S$ induce two
hearts $\ol{\h}$ and $\ol{ \h^\sharp_S }$ in $\D/\calV$, such that the quotient
functor $\pi$ is t-exact. The t-structures on the quotient are the images of
the t-structures on~$\calD$, therefore $\h\le\h^\sharp_S\le\h[1]$ implies
\[
\overline{\h}\le\overline{\h^\sharp_S}\le\overline{\h}[1].\]
Moreover,
$\<S\>=\h^\sharp_S[-1]\cap\h$ implies
\[
        \<\overline{S}\>\=\overline{\h^\sharp_S}[-1]\cap\overline{\h}.
\]
By Lemma~\ref{lem:nearby_heart}, we see that
$\overline{\h^\sharp_S}$ is indeed a forward tilt of $\ol{\h}$ with respect to the simple $\ol{S}$.
\end{proof}
\par
We can now state the generalization of Theorem~\ref{thm:STBTiso} to non-simple
weights.
\par
\begin{theorem} \label{thm:EGiso}
Fix a triangulation~$\TT_0$ and the component
of $\EGp(\D(\surf_\Delta))$ corresponding to $\pvd(\Gamma_{\TT_0})$.
There is an isomorphism
$$\EGb(\colsur)\cong \EGb(\Dcol)$$
of the principal parts,
determined by~$\TT_0$ and $\EGp(\D(\surf_\Delta))$ respectively, of
the exchange graphs for partial triangulations and for hearts of quotient type.
\par
In particular $\EGb(\D(\colsur))$ is a union of connected components of
$\EG(\D(\colsur))$.
\end{theorem}
\par
\begin{proof} Let $\AS$ be a partial triangulation in $\EGb(\colsur)$ with
a refinement~$\TT$ in the component of~$\TT_0$. Define $\varphi: \EGb(\colsur)
\to \EGb(\Dcol)$ on vertices by mapping~$\AS$ to the quotient~$\ol{\h}({\TT})$ of
the canonical heart~$\h(\TT)$ of~$\pvd(\Gamma_\TT)$ in $\D(\colsur) = \calD/\calV$.
This is well-defined by Proposition~\ref{equal_quot_heart}. The surjectivity
of~$\varphi$ follows from the surjectivity part of the isomorphism~\eqref{EGpvdEG}
from Theorem~\ref{thm:STBTiso}. For the injectivity of~$\varphi$ we combine
the injectivity part of this isomorphism with Proposition~\ref{refdifferbymutation}
and Proposition~\ref{equal_quot_heart}.
\par
We now consider the edges. For any forward flip $\AS\xrightarrow{\gamma}
\AS^\sharp_\gamma=\AS'$ in $\EGb(\colsur)$, by Proposition~\ref{prop:refine_of_flip},
we can refine it to a forward flip $\TT\xrightarrow{\gamma}\TT^\sharp_\gamma=\TT'$
in $\EGp(\surfo)$ with the property that there is no arrow from $\gamma$ to any
open arc in $\TT\setminus\AS$ in $Q_\TT$.
Let $\h(\TT)\xrightarrow{S}\h(\TT')$ be the simple tilting corresponding
to $\TT\xrightarrow{\gamma}\TT'$, i.e., so that the simple~$S$ corresponds
to the arc~$\gamma$. Let $\calK$ be the subcategory of $\h(\TT)$ generated by
the simples in $\Sim\h(\TT)$ corresponding to arcs in $\TT\setminus\AS$.
By \cite[Lemma~2.15]{KY} the no-arrow-condition above implies $\Ext^1(\calK,S)=0$.
Then by Proposition~\ref{prop:ind.tilting} the simple tilting at~$S$
induces a ('quotient') simple tilting $\h(\AS)\to\h(\AS')$, and this
is indeed an edge in $\EGb(\Dcol)$. Conversely, every edge in $\EGb(\Dcol)$
arises by definition (and~\eqref{EGpvdEG}) from a flip $\h(\TT)\xrightarrow{S(\gamma)}
\h(\TT')$ between triangulations in the component of~$\TT_0$. This gives
rise by definition to an edge in $\EGb(\colsur)$. We have thus shown
that $\varphi$ is indeed a graph isomorphism.
\par
For the last statement recall from the end of the proof of
Proposition~\ref{prop:refine_of_flip} that $\EGb(\colsur)\cong \EGb(\Dcol)$ is
an $(m,m)$-regular graph, where $m$ is the number of edges in any $\w$-mixed-angulation
of $\colsur$. On the other hand, $\EG(\colsur)$ has at most $m=\mathrm{rank}(K(\Dcol))$
many edges. Since  $\EGb(\colsur)$ is defined as a (full) subgraph of $\EG(\colsur)$,
there cannot be any edges of $\EGb(\colsur)$ connecting a vertex of $\EGb(\colsur)$
to a vertex outside this subgraph. It must thus consist of components of $\EG(\colsur)$,
as we claimed.
\end{proof}

%=========================================================
\subsection{The symmetry groups} \label{sec:symmetrygroups}
%=========================================================
We study the symmetry groups of the surfaces and the categories,
which will be used later. For $\calD=\Dsan$,  we have the following subgroups
\be\label{eq:Nil+Aut}
    \Nil^\circ(\calD) \subset \Aut^\circ_K(\cD) \subset \Aut^\circ(\cD) \subset
\Aut(\cD)
\ee
defined as follows. $\Aut^\circ(\cD)$ is the subgroup of $\Aut(\cD)$ consisting on
autoequivalences of~$\cD$ that preserve the principal component $\Stap(\D)$
corresponding to $\EGp(\D)$. Let $\Aut_K^\circ(\cD)$ be the subgroup of
autoequivalences that moreover act as identity on the Grothendieck
group $K(\cD)$. We call autoequivalences that act trivially on $\Stap(\calD)$
\emph{negligible autoequivalences}. We will also be interested in the quotients
\be\label{eq:pzcAut}
 \mathpzc{Aut}^\circ(\calD) \= \Aut^\circ(\cD)/\Nil^\circ(\calD) \; \text{ and } \;
\mathpzc{Aut}^\circ_K(\calD) \= \Aut^\circ_K(\cD)/\Nil^\circ(\calD)\,. \
\ee
Note that as $\mathpzc{Aut}^\circ(\calD)$ acts faithfully on $\Stap(\D)$,
it also acts faithfully on $\EGp(\D)$.
\par
As preparation, we show that autoequivalences correspond to mapping classes
in the classical case. The following result is implicit in~\cite{KQ2}:
\par
\begin{prop} \label{prop:AutMCGclassical}
There is an embedding $$i_{\TT_0}\colon\aut^\circ(\Dsan) \to \MCG(\surfo)$$
depending on the choice of the initial triangulation $\TT_0$.
Restricted to $\ST(\Gamma_{\TT_0})$, the embedding~$i_{\TT_0}$ becomes the isomorphism
between twist groups in Theorem~\ref{thm:STBTiso}.
\par
The map~$i_{\TT_0}$ surjects onto the subgroup $\MCG^\circ(\surfo)$ of $\MCG(\surfo)$
that stabilizes the component $\EG^\circ(\surfo)$.
\end{prop}
\begin{proof}
Given $f\in\aut^\circ(\Dsan)$, it maps the heart $\h_0$ associated to $\TT_0$ to
some heart $\h\in\EG^\circ(\Dsan)$. Let $\TT$ be the triangulation corresponding to $\h$
in \eqref{EGpvdEG}. Since~$f$ is an autoequivalence, there is an element $\gamma
\in \MCG(\surf)$ that maps the triangulation~$T$ of the corresponding undecorated
surface to~$T_0$. In fact, in this way \cite[Theorem~9.9]{BS15} (see also
\cite[Theorem~4.12]{KQ2}) show that there is short exact sequence
\be\label{eq:ses.aut}
    1\to\mathpzc{ST}(\D) \to \mathpzc{Aut}^\circ(\D) \to \MCG(\surf) \to 1,
\ee
where $\mathpzc{ST}(\D)= \mathpzc{ST}(\Dsan)= \mathpzc{ST}(\Gamma_{\TT_0})$
is the image of the spherical twist group in the quotient by negligible
autoequivalences. It thus suffice to alter~$\gamma$ by an element in the surface
braid group to exhibit an element an element $i_{\TT_0}(f)$ in $\MCG(\surfo)$
that maps $\TT_0$ to $\TT$. This element is unique up to isotopy by the
Alexander Lemma (stating that any homeomophism of a once-decorated disk is
isotopy to
identity if it preserves the boundary pointwise). This uniqueness also shows
that the assignment $i_{\TT_0}(\cdot)$ is actually a group homomorphism.
It is injective as we have taken the quotient by the negligible
autoequivalences. Comparing with the proof of Theorem~\ref{thm:STBTiso}
we see that $i_{\TT_0}$ gives the isomorphism between twist groups there.
\par
For the surjectivity and thanks to~\eqref{eq:ses.aut} we only need to
ensure that the elements in the surface braid group that stabilizes
$\EG^\circ(\surfo)$ are in the image of $i_{\TT_0}(f)$. This stabilizer subgroup
is the braid twist group $\BT(\TT_0)$ by Theorem~\ref{thm:EGsurfo} and
then the isomorphism  $\BT(\TT_0) \cong \ST(\Gamma_{\TT_0})$  yields
the claim, since the latter group is obviously a subgroup of ${\aut}^\circ(\Dsan)$.
\end{proof}
\par
Now let us consider the case of $\colsur$ obtained from $\surfo$ by collapsing~$\subsur$,
and the quotient categories $\Dcol=\Dsan/\calD_3(\subsur)$. We need the follow
subgroups of mapping class groups. For any subgroup $G$ of $\MCG(\surfo)$
 let
\bes
G^\subsur\=\{g\in G\mid g(\subsur)=\subsur\}
\ees be the subgroups leaving invariant the subsurface $\subsur$.
Finally we let $\ul{\MCG}(\Sigma)$ be the mapping class group of the
unmarked surface associated with~$\Sigma$ and let $\ul{\MCG}^\circ({\subsur})
=\ul{\MCG}({\subsur})\cap \MCG^\circ(\surfo)^{{\subsur}}$.
We define the \emph{liftable subgroup} of the mapping class group of the
collapsed surface to be the quotient group and the subgroup
\bes
    \MCG^\bullet_\lift(\colsur)\colon=
        \frac{\MCG^\circ(\surfo)^{{\subsur}}}{\ul{\MCG}^\circ({\subsur})}
\,\subseteq \,
\MCG_\lift(\colsur) \,:=\, \frac{\MCG(\surfo)^{{\subsur}}}{\ul{\MCG}({\subsur})}
\quad \,\subseteq\, \MCG(\colsur)
\ees
Collapsing~$\Sigma$, a subsurface with two non-isomorphic connected components,
say one of them a disc and one with positive genus, such that the
collapse results in the same weights $w_i>1$, shows that in general the inclusion
is strict: mapping class group elements that swap the marked points corresponding
to the higher weights are not liftable.
\par
\medskip
We define the groups of autoequivalences like $\Aut^\bullet(\cD),
\mathpzc{Aut}^\bullet(\cD)$ as in \eqref{eq:Nil+Aut} and \eqref{eq:pzcAut}
by the requirement to stabilize the principal part (instead of a fixed component).
For any subgroup $G \subset \aut(\Dsan)$ we write $G^\Sigma$ for the subgroup that
stabilizes the subcategory~$\calD_3(\subsur)$. Finally we let $\ul{\aut}^\circ(\Dsub)$
be the subgroup of $\aut^\circ(\Dsub)$ consisting of elements that are restricted from
elements in $\aut^\circ(\Dsan)$. We define
\be\label{eq:aut.lift}
    \aut^\bullet_\lift(\Dcol) \= \frac{\aut^\circ(\Dsan)^\subsur}{\ul{\aut}^\circ(\Dsub)}\,.
\ee
We can now state the goal of this subsection:
\par
\begin{prop} \label{prop:AutMCGgen}
There is an embedding $$i_{\TT_0}\colon\aut^\bullet_\lift(\Dcol) \to \MCG(\colsur)$$
depending on the choice of the initial triangulation $\TT_0$ of $\surfo$.
The map~$i_{\TT_0}$ surjects onto the subgroup $\MCG^\bullet_\lift(\colsur)$.
\end{prop}
\par
To \emph{prove Proposition~\ref{prop:AutMCGgen}} we only need the following two lemmas.
\par
\begin{lemma}
There is an isomorphism $\ul{\aut}^\circ(\Dsub)\cong\ul{\MCG}^\circ(\Sigma)$ obtained
by restriction of the isomorphism $\aut^\circ(\Dsub)\to\MCG^\circ(\Sigma)$.
\end{lemma}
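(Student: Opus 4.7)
The plan is to apply Proposition~\ref{prop:AutMCGclassical} twice. First to $\surfo$ with the initial triangulation $\TT_0$, yielding the embedding $i_{\TT_0}\colon\aut^\circ(\Dsan)\hookrightarrow\MCG^\circ(\surfo)$. Second, to $\subsur$ regarded as a DMS with simple weights in its own right (with the boundary markings prescribed by $\kappa_{ij}\geq 3$) and equipped with the induced triangulation $\TT_0|_{\subsur}$; this gives the isomorphism $i_{\TT_0|_{\subsur}}\colon\aut^\circ(\Dsub)\xrightarrow{\sim}\MCG^\circ(\subsur)$ that is referred to in the lemma.

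The key step will be to verify that these two embeddings intertwine the restriction operations on both sides, i.e., that the square
$$
\begin{array}{ccc}
\aut^\circ(\Dsan)^\subsur & \xrightarrow{\;i_{\TT_0}\;} & \MCG^\circ(\surfo)^\subsur \\
\text{restr.}\downarrow\; & & \;\downarrow\text{restr.} \\
\aut^\circ(\Dsub) & \xrightarrow{i_{\TT_0|_{\subsur}}} & \MCG^\circ(\subsur)
\end{array}
$$
commutes, where the vertical arrows are the evident restriction maps (of autoequivalences to $\Dsub$ on the left, and of mapping classes to $\subsur$ on the right). To check this, I would unpack the construction of $i_{\TT_0}$: an element $f\in\aut^\circ(\Dsan)^\subsur$ sends $\calH(\TT_0)$ to some heart $\calH(\TT)$, with $\TT$ obtained from $\TT_0$ by the action of $i_{\TT_0}(f)$. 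Because $f$ preserves $\Dsub$, the sub-heart $\calH(\TT_0|_{\subsur})=\calH(\TT_0)\cap\Dsub$ is mapped to $\calH(\TT)\cap\Dsub=\calH(\TT|_{\subsur})$, and this matches the geometric action of $i_{\TT_0}(f)|_{\subsur}$ on arcs of $\subsur$. Uniqueness of the mapping class lift (via the Alexander lemma invoked in Proposition~\ref{prop:AutMCGclassical}) then forces $i_{\TT_0}(f)|_{\subsur}=i_{\TT_0|_{\subsur}}(f|_{\Dsub})$.

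Once commutativity of the square is known, the lemma is immediate. By definition, $\ul{\aut}^\circ(\Dsub)$ is the image of the left vertical arrow, and under the bottom isomorphism this image is carried to the image of the right vertical arrow, which equals $\ul{\MCG}(\subsur)\cap\MCG^\circ(\surfo)^\subsur=\ul{\MCG}^\circ(\subsur)$. Hence $i_{\TT_0|_{\subsur}}$ restricts to the asserted bijection.

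The main obstacle I expect lies in the middle paragraph: one must track carefully how the boundary markings on $\partial\subsur$---which turn $\subsur$ into a DMS but are invisible on $\surfo$---behave under restriction, and ensure that negligible autoequivalences on the two categories match up under restriction. The hypothesis $\kappa_{ij}\geq 3$ provides enough markings on each boundary component of $\subsur$ that the Alexander lemma can be applied to each disc of $\surfo\setminus\subsur$ to pin down the restricted mapping class up to isotopy, and the compatibility of $\TT_0$ with $\TT_0|_{\subsur}$ ensures this geometric restriction matches the categorical one.
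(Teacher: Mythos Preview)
Your overall strategy matches the paper's: apply Proposition~\ref{prop:AutMCGclassical} to both $\surfo$ and $\subsur$ and argue compatibility with restriction. There is, however, a circularity in your commutativity step. You write $\calH(\TT)\cap\Dsub=\calH(\TT|_{\subsur})$ and refer to ``$i_{\TT_0}(f)|_{\subsur}$'', but both expressions presuppose that the mapping class $i_{\TT_0}(f)$ already preserves the subsurface $\subsur$, which is precisely what the square is supposed to deliver. The paper closes this loop by invoking Lemma~\ref{le:HomeoTriang}: since $\subsur$ can be reconstructed from the category $\Dsub$, the categorical condition $f(\Dsub)=\Dsub$ forces $i_{\TT_0}(f)$ to preserve the collapsing data and hence $\subsur$. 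Without that ingredient (or the closed-arc/spherical-object correspondence used in the next lemma), your heart-tracking argument does not stand on its own.

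A secondary issue concerns your diagram chase. The right vertical restriction does not naturally land in $\MCG^\circ(\subsur)$, since mapping classes of $\surfo$ have no reason to respect the auxiliary markings on $\partial\subsur$; its target is $\ul{\MCG}(\subsur)$. Moreover the image of this restriction is in general strictly larger than $\ul{\MCG}^\circ(\subsur)$, because elements of $\MCG^\circ(\surfo)^{\subsur}$ may act nontrivially on $\surfo\setminus\subsur$, and there is no guarantee that their restriction to $\subsur$, re-extended by the identity, still lies in $\MCG^\circ(\surfo)$. So ``image of the left arrow maps to image of the right arrow'' does not immediately give $\ul{\MCG}^\circ(\subsur)$. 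The paper sidesteps this by arguing injectivity and surjectivity of $i_{\TT_0|_{\subsur}}$ restricted to $\ul{\aut}^\circ(\Dsub)$ directly rather than through a square.
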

\par
\begin{proof} Choose any triangulation~$\TT_0$ of $\surfo$ that can be homotoped
to a triangulation~$\TT_\Sigma$ of $\Sigma$.
By definition, an element in $\ul{\aut}^\circ(\Dsub)$ is restricted
from an element in $\gamma \in \aut^\circ(\Dsan)^\Sigma$.  Regarding this element~$\gamma$
as mapping class on~$S_\Delta$ by Proposition~\ref{prop:AutMCGclassical},
the restriction condition on the categorical side translates by
Lemma~\ref{le:HomeoTriang} to the condition that the mapping class needs to preserve
the collapsing data. Thus $i_{\TT_0}(\gamma) \in \MCG^\circ(\surfo)^\Sigma$ and by
definition the initial automorphism  $i_{\TT_\Sigma}$ restricts
to an injection $\ul{\aut}^\circ(\Dsub)\to\ul{\MCG}^\circ(\Sigma)$.
It is surjective since any element in $\ul{\MCG}^\circ(\Sigma)$ can be regarded
as an element in $\MCG^\circ(\surfo)$ or equivalently via $i_{\TT_0}$ as an element
in $\aut^\circ(\Dsan)$. Restricted to $\Dsub$, we see that it is indeed an element in $\ul{\aut}^\circ(\Dsub)$ and the lemma follows.
\end{proof}
\par
\begin{lemma}
There is an isomorphism $\aut^\circ(\Dsan)^\Sigma\to\MCG^\circ(\surfo)^\Sigma$ obtained
by restriction of the isomorphism $\aut^\circ(\Dsan)\to\MCG^\circ(\surfo)$.
\end{lemma}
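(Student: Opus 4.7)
The plan is to specialize the isomorphism $i_{\TT_0}\colon\aut^\circ(\Dsan)\to\MCG^\circ(\surfo)$ from Proposition~\ref{prop:AutMCGclassical} and check that the stabilizer conditions on the two sides correspond. Since $i_{\TT_0}$ is already a group isomorphism, the task reduces to showing that for $f\in\aut^\circ(\Dsan)$, we have $f(\calD_3(\subsur))=\calD_3(\subsur)$ if and only if $i_{\TT_0}(f)$ preserves the isotopy class of the subsurface $\subsur$.

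First I would choose the initial triangulation $\TT_0$ of $\surfo$ so that it restricts to a triangulation $\TT_\subsur$ of $\subsur$, exactly as in the previous lemma. Under the arc-simple correspondence encoded in~\eqref{EGpvdEG}, the arcs of $\TT_\subsur$ correspond to a distinguished set of simples of $\calH(\Gamma_{\TT_0})$ which generate the Serre subcategory $\calH(\Gamma_{\TT_0})\cap\calD_3(\subsur)$, and hence generate the thick subcategory $\calD_3(\subsur)\subset\Dsan$.

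For the forward inclusion, given $f\in\aut^\circ(\Dsan)^\subsur$, set $\phi=i_{\TT_0}(f)$ and $\TT=\phi(\TT_0)$. Then $f$ sends the canonical heart $\calH(\Gamma_{\TT_0})$ to $\calH(\Gamma_\TT)$, and the hypothesis $f(\calD_3(\subsur))=\calD_3(\subsur)$ together with the finiteness of the hearts implies that $f$ permutes the simples in $\calD_3(\subsur)\cap\calH(\Gamma_{\TT_0})$ to those in $\calD_3(\subsur)\cap\calH(\Gamma_\TT)$. Translating back via $i_{\TT_0}$ and invoking Lemma~\ref{le:HomeoTriang}, the arcs $\phi(\TT_\subsur)$ form a sub-triangulation of $\surfo$ whose union is a subsurface of the same topological type as $\subsur$, and an Euler characteristic / boundary matching argument forces $\phi(\subsur)$ to be isotopic to $\subsur$, so $\phi\in\MCG^\circ(\surfo)^\subsur$. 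For the reverse inclusion, if $\phi\in\MCG^\circ(\surfo)^\subsur$ then $\phi(\TT_\subsur)$ is again a sub-triangulation of $\subsur$, hence the simples of $\calH(\Gamma_{\phi(\TT_0)})$ labelled by these arcs all lie in $\calD_3(\subsur)$; as these simples together with $\calD_3(\subsur)\cap\calH(\Gamma_{\phi(\TT_0)})$ generate the same thick subcategory $\calD_3(\subsur)$ (by Proposition~\ref{equal_quot_heart} and Proposition~\ref{refdifferbymutation}), the corresponding $f=i_{\TT_0}^{-1}(\phi)$ preserves~$\calD_3(\subsur)$.

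The main obstacle I foresee is the gap between the categorical condition (preserving a thick subcategory) and the topological one (preserving a subsurface up to isotopy): a priori $f$ could send the distinguished generating simples of $\calD_3(\subsur)$ to a different silting-type collection of objects corresponding to another, possibly unrelated, triangulation of a subsurface. Overcoming this requires combining Proposition~\ref{refdifferbymutation} (any two refinements of a given mixed-angulation are connected by flips inside the subsurface) with Proposition~\ref{equal_quot_heart} (the resulting quotient data, and in particular the thick subcategory they generate, is independent of this choice), so that all refinements inside $\subsur$ produce the same categorical avatar and the cartesian structure of $i_{\TT_0}$ transports back and forth cleanly.
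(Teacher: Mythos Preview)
Your reverse inclusion is fine, but the forward inclusion has a genuine gap at the step where you conclude that $\phi(\subsur)$ is isotopic to~$\subsur$. What your argument actually establishes is that the simples of~$\calH(\Gamma_{\phi(\TT_0)})$ labelled by~$\phi(\TT_\subsur)$ lie in~$\calD_3(\subsur)$, and hence (since they also generate $\calD_3(\phi(\subsur))$) that $\calD_3(\phi(\subsur)) = \calD_3(\subsur)$ as thick subcategories of~$\Dsan$. But neither Lemma~\ref{le:HomeoTriang} nor an ``Euler characteristic / boundary matching argument'' lets you pass from this to an isotopy of subsurfaces inside~$\surfo$: Lemma~\ref{le:HomeoTriang} only yields an abstract homeomorphism $\phi(\subsur)\cong\subsur$ (which is automatic anyway, $\phi$ being a diffeomorphism), and Euler-characteristic matching says nothing about the embedding --- think of two non-isotopic annular neighbourhoods of distinct simple closed curves on a higher-genus surface.

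What is actually needed to close this gap is the bijection~\eqref{eq:X} between closed arcs in~$\surfo$ and reachable spherical objects in~$\Dsan$: equality of the subcategories forces the sets of closed arcs contained in~$\phi(\subsur)$ and in~$\subsur$ to coincide, and then \cite[Lemma~4.6]{qiubraid16} (the subsurface is a neighbourhood of the union of the closed arcs in any dual triangulation) gives the isotopy. This is precisely the route the paper takes, and it does so directly: rather than working with open arcs and simples in a fixed heart, it translates ``$f$ stabilises $\Dsub$'' into ``$\phi$ stabilises the set of closed arcs in~$\subsur$'' via~\eqref{eq:X}, and then immediately into ``$\phi$ stabilises~$\subsur$'' via the cited lemma. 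Your open-arc approach can be salvaged, but only by eventually invoking exactly these two ingredients, so it ends up being a detour through the paper's argument.
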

\par
\begin{proof}
We regard $\aut^\circ(\Dsan)^\Sigma$ as a subgroup of $\MCG^\circ(\surfo)$
and the condition of stabilizing $\Dsub$ translates topologically to stabilizing
all simple closed arcs in~$\Sigma$, using the correspondence  between closed arcs
and reachable spherical objects in \cite[Thm.~6.6]{qiubraid16}.
By \cite[Lemma~4.6]{qiubraid16},
$\Sigma$ is in fact a neighbourhood of the union of any triangulation dual to the
closed arcs in~$\Sigma$. Thus, the condition of stabilizing the arcs is topologically
equivalent to the condition of stabilizing $\Sigma$ and the lemma follows.
\end{proof}
\par
In the proofs above we have been using the following statement.
\par
\begin{lemma} \label{le:HomeoTriang}
Let $\Sigma_1$ and $\Sigma_2$ be two DMS with simple weights and without punctures,
with associated $\CY_3$ categories~$D_3(\Sigma_i)$. Then $D_3(\Sigma_1)$ is triangle
equivalent to $D_3(\Sigma_2)$ if and only if $\Sigma_1$ is homeomorphic to
$\Sigma_2$.
\end{lemma}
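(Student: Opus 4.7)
The plan is to prove the two directions separately. For the \emph{if} direction, a DMS homeomorphism $\phi\colon\Sigma_1\to\Sigma_2$ would transport any triangulation $\TT_1$ of $\Sigma_1$ to a triangulation $\phi(\TT_1)$ of $\Sigma_2$ of identical combinatorial type, so that the associated quivers with potential match. By Definition~\ref{def:Ginzburg} this gives an isomorphism of the Ginzburg dg algebras, and Proposition~\ref{prop:mutationiso} then yields a triangle equivalence $\calD_3(\Sigma_1)\simeq\calD_3(\Sigma_2)$.

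For the \emph{only if} direction, let $F\colon\calD_3(\Sigma_1)\to\calD_3(\Sigma_2)$ be a triangle equivalence. First I would arrange that $F$ carries the principal component $\EGp(\calD_3(\Sigma_1))$ (corresponding to some chosen initial triangulation $\TT_1$ of $\Sigma_1$) to the principal component of the target, replacing $F$ by its composition with an autoequivalence of $\calD_3(\Sigma_2)$ if necessary; such autoequivalences exist thanks to Theorem~\ref{thm:STBTiso}. Then the image $F(\calH(\Gamma_{\TT_1}))$ is a heart in $\EGp(\calD_3(\Sigma_2))$, hence by Theorem~\ref{thm:STBTiso} equals $\calH(\Gamma_{\TT_2})$ for some triangulation $\TT_2$ of $\Sigma_2$. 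Since $F$ preserves $\Hom$-spaces, compositions and triangles, it identifies simples, arrows (as dimensions of $\Ext^1$ between simples), and the potential (through $3$-cycles coming from triangles), so that $(Q_{\TT_1},W_{\TT_1})\cong(Q_{\TT_2},W_{\TT_2})$ as quivers with potential.

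The main step — and the main obstacle — is to reconstruct the DMS from this combinatorial data. Under the hypotheses of simple weights and no punctures, which exclude self-folded triangles and other degenerate local configurations in the sense of Section~\ref{sec:MS}, the quiver $Q_\TT$ records the adjacency of arcs, each triangle of $\TT$ corresponds to a $3$-cycle of $W_\TT$, and the arrows incident at a fixed vertex together with the $3$-cycles containing that vertex encode the cyclic ordering of triangles around the corresponding arc. Gluing the abstract triangles along their shared edges would then recover $\Sigma$ as a DMS up to homeomorphism, with boundary components, marked points and decorations read off from the resulting cyclic orderings at the arc-endpoints. I would invoke the combinatorial rigidity of ideal triangulations of marked surfaces due to Fomin--Shapiro--Thurston to make this reconstruction precise; applying it to both $\TT_1$ and $\TT_2$ yields $\Sigma_1\cong\Sigma_2$.
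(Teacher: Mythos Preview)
Your \emph{if} direction matches the paper. For the \emph{only if} direction, the first step is a genuine gap: you cannot in general arrange that $F$ carries the principal component $\EGp(\calD_3(\Sigma_1))$ to the principal component of the target. Theorem~\ref{thm:STBTiso} identifies the spherical and braid twist groups and the two \emph{principal} exchange graphs, but it says nothing about autoequivalences acting transitively on the components of $\EG(\calD_3(\Sigma_2))$. Whether an arbitrary heart of $\pvd(\Gamma)$ can be moved into the principal component by an autoequivalence is unknown in general and is essentially the connectedness question for $\Stab$. Without this you cannot conclude that $F(\calH(\Gamma_{\TT_1}))=\calH(\Gamma_{\TT_2})$ for some triangulation $\TT_2$ of $\Sigma_2$, and the comparison of quivers with potential and the appeal to FST never get off the ground.

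The paper sidesteps this entirely: it never tries to locate $F(\calH_1)$ inside the exchange graph of the target. Instead it extracts $(Q,W)$ intrinsically from a single heart --- $Q$ from the $\Ext^1$'s between simples, and $W$ from the $A_\infty$-structure on the $\Ext$-algebra of $\bigoplus_i S_i$, which is formal here because the potential is a sum of $3$-cycles, so the ordinary multiplication~$m_2$ alone determines~$W$ --- and then rebuilds the surface from $(Q,W)$ by an explicit triangle-gluing recipe that inverts the construction of Section~\ref{sec:MS}. All of this data is manifestly preserved by any triangle equivalence, so no control over which component $F(\calH_1)$ lands in is needed. Your recovery of~$W$ ``through $3$-cycles coming from triangles'' is also too loose on its own: the quiver~$Q$ may contain $3$-cycles that are not summands of~$W$, and it is precisely the Yoneda product (the paper's~$m_2$) that singles out the genuine potential terms.
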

\par
\begin{proof}
The existence of a homeomorphism obviously implies the existence of a triangle
equivalence. For the converse we reconstruct the surface~$\Sigma$ from a single
heart~$\calH$ in a way that is obviously inverse to the construction of~$\calH$ from~$\Sigma$. First, the quiver~$Q$ is the graph given by the simples~$S_i$ in~$\calH$
with edges given by non-trivial $\Ext^1$'s. Second we reconstruct the potential~$W$
from the $\Ext$-algebra of the $\Gamma$-module $S = \oplus_{i \in \Sim(\calH)} S_i$.
This $\Ext$-algebra carries an $A_\infty$-structure, unique up to $A_\infty$-isomorphism.
An explicit construction of this structure is given for any quiver with potential
in \cite[Appendix~A.15]{keller11}. Since in our case the potential consists of
$3$-cycles only, the $\Ext$-algebra is formal, i.e.\ the higher multiplication
maps~$m_n$ for $n \geq 2$ vanish. This means that the model given in loc.\ cit.\ is
the minimal model of the $A_\infty$-isomorphism class and that the multiplication
map~$m_2$ given in log.\ cit.\ is canonically associated with~$S$. This map~$m_2$
determines the potential~$W$ uniquely.
\par
Finally, we reconstruct the surface from~$(Q,W)$, reversing the construction
in Section~\ref{sec:MS}: For $3$-cycle in~$W$ glue a triangle to the corresponding
edges of~$Q$. For each arrow of~$Q$ not in a $3$-cycle, glue a triangle with one
edge as boundary edge to the two edges representing head and tail of the arrow.
For each vertex of~$Q$ to which only a single of the preceding rules apply (in
the since that such a $3$-cycle passes through the vertex, or such an arrow
starts or ends in the vertex), glue a triangle with two boundary edges (and one
boundary marked point). Finally, if for a vertex none of the preceding rules
apply (which happens only for the $A_1$-quiver), then we glue two such triangles.
Since all data used for this reconstruction procedure are preserved by the
equivalence, the lemma follows.
\end{proof}
\par
\par
%For use in Section~\ref{sec:GR} note that replacing/restricting $\aut^\circ$
%by/to $\ST$, we have the \emph{liftable spherical twist group},
%defined in analogy with \eqref{eq:aut.lift} as
%\be
%    \ST_\lift(\Dcol) \,:=\, \frac{\ST(\Dsan)^\subsur}{\ST(\Dsub)}\,.
%\ee
%Note that here we do not need to take $\ul{\ST}$ since all spherical twists
%of~$\Dsub$  are also spherical twists in $\Dsan$ (hence the restriction).
%These groups of course match their braid counterparts respectively, i.e.
%\be\label{eq:BT.lift}
%\BT_\lift(\colsur) \,:=\,
%\frac{\BT(\surfo)^{{\subsur}}}{\BT({\subsur})}\,.
%\ee

% !TeX root    = stab_qdiff_collapse.tex
%%%%%%%%%%%%%%%%%%%%%%%%%%%%%%%%%%

\section{An extension of the  Bridgeland-Smith correspondence}
\label{sec_iso}
%%%%%%%%%%%%%%%%%%%%%%%%%%%%%%%%%%%%%%%%%%%%%%%%%%%%%%%%

In their paper \cite{BS15} Bridgeland and Smith gave a correspondence roughly
between the space of framed quadratic differentials with only simple zeroes
and stability conditions on the category $\pvd(Q,W)$ where~$(Q,W)$ is the
quiver with potential associated with a saddle-free differential.
In this section we recall their result and extend it to our main result,
a correspondence between the space of framed quadratic differentials with
higher order zeros and certain stability conditions supported on the
quotient categories introduced in Section~\ref{sec:cat sub/col}. Various
mapping class subgroups and groups of autoequivalence have been defined
in Section~\ref{sec:symmetrygroups}.

%%%%%%%%%%%%%%%%%%%%%%%%%%%%%%%%%%
\subsection{The original Bridgeland-Smith correspondence}	
\label{sec:origsetting}
%%%%%%%%%%%%%%%%%%%%%%%%%%%%%%%%%%

We state the Bridge\-land-Smith correspondence in the version of \cite{KQ2}
lifted to Teichm\"{u}ller-framed quadratic differentials and in the case that each
boundary component of $\bS$ has at least one marked point, i.e.\ the
quadratic differentials have poles of higher order $\geq 3$ only.
In this way we avoid the extra technicalities of local orbifold structure
(the space $\Quad_\heart(\bS, \MM)$ introduced in \cite{BS15}). For the notation
concerning spaces of quadratic differentials we refer the reader to
Section~\ref{sec:qdiffgeneral}.
\par
Fix a genus~$g$ polar part~$\w^-$ of the signature, the number~$n = 2g-2 + |\w^-|$
of simple zeros with $\surf_{\Delta}$ a reference surface of this type, and fix an
initial Teichm\"{u}ller-framed quadratic differential
$(X_0,q_0,\psi_0) \in \FQuad(\surf_{\Delta})$ and suppose that $q_0$ is saddle-free.
Denote by $\FQuad^\circ(\surf_{\Delta})$ the connected component containing~$q_0$.
Using (the classical version in \cite{BS15} for simple weights of)
Definition~\ref{def:ASfromq} the differentials gives us a triangulation~$\TT_0$,
which gives a quiver with potential $(Q_0,W_0)$ by the construction in
Section~\ref{sec:MS} and thus the category $\calD = \pvd(\Gamma_{\TT_0})$ defined
in Section~\ref{subsec_quiver3cat} with its standard heart $\calH_0$. Fix a
canonical double cover $(\widehat{X_0},\omega_0)$. For each horizontal strip
let $\eta_i$ be the saddle connection crossing that strip, by definition
a closed arc dual to~$\TT_0$.
Let $\widehat{\eta_i}$ be the corresponding hat-homology class, oriented such
that its $\omega_0$-period $\Per(\widehat{\eta_i}) \in \ol{\bH}$. Denoting by
$S_i \in \Sim(\calH_0)$ the simple object corresponding to  $\widehat{\eta_i}$
and define the map~$Z_0$ by $Z_0(S_i) = \Per(\widehat{\eta_i})$. In total we
defined a stability condition $\sigma_0 = (\calH_0,Z_0)$.
\par
Fix an isomorphism  $\theta_0 : \Gamma  \to  \widehat H_1(q_0)$ and fix an
isomorphism $\nu_0: \Gamma \to K(\calD)$. Recording just the central charge
gives a map
\ba \label{def:pi2}
\pi_2: \Stab^\circ(\cD) &\rightarrow \Hom_\Z(\Gamma,\bC),\\
(Z,\calA) &\mapsto (Z \circ \nu_0).
\ea
whose factorization through~$\Stab^\circ(\cD)/\mathpzc{Aut}_K(\cD)$ we denote by
the same symbol. On the other hand, on the space of period-framed quadratic
differentials the projection
gives a map
\ba \label{def:pi1}
\pi_1: \Quad^\Gamma_g(1^r, \w^-) &\rightarrow \Hom_\Z(\Gamma,\bC),\\
	(q, \rho) &\mapsto (\Per(q) \circ \rho \circ \theta_0).
\ea
Note that our notion of Teichm\"uller framing does not frame the double
cover, while the hat-homology depends on the double cover. Thus a priori
it is not clear whether the cover $\FQuad(\surf_{\Delta})$ dominates
the cover $\Quad^\Gamma_g(1^r,\w^-)$. This is proven along with the following
theorem.
\par
\begin{theorem}\label{thm:BS15_iso}
There is an isomorphism of complex manifolds
\be
K: \FQuad^\circ(\surf_{\Delta})   \to \Stab^\circ(\cD)\,.
\ee
The natural covering map $\FQuad(\surf_{\Delta}) \to \Quad_g(1^r,\w^-)$ factors
through a covering $\pi_0: \FQuad(\surf_{\Delta}) \to \Quad^\Gamma_g(1^r,\w^-)$.
The map $K$ commutes with the maps $\pi_1 \circ \pi_0$ and $\pi_2$
to $\Hom(\Gamma,\bC)$ given by periods and by the central charge respectively.
This map~$K$ is equivariant with respect to the action of the mapping class group
$\MCG(\surf_{\Delta})$ on the domain and of the group $\mathpzc{Aut}^\circ(\cD)$ on
the range. The map~$K$ descends to  isomorphisms of complex orbifolds
\ba
K^\Gamma:& \Quad^{\Gamma,\circ}_g(1^r,\w^-) \to
\Stab^\circ(\cD)/\mathpzc{Aut}^\circ_K(\cD) \\
\ol{K}:& \Quad_g(1^r,\w^-) \to \Stab^\circ(\cD)/\mathpzc{Aut}^\circ(\cD) \,,
\ea
where $\Quad^{\Gamma,\circ}_g(1^r,\w^-)$ is the connected component given by
the image of~$\pi_0$.
\end{theorem}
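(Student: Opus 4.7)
The plan is to establish the biholomorphism $K$ in stages, following the strategy of \cite{BS15} but in the Teichm\"uller-framed set-up of \cite{KQ2}, and then to derive the factorizations and descent statements by an equivariance argument.

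First, I would define $K$ on the open locus $B_0 \subset \FQuad^\circ(\surf_\Delta)$ of saddle-free differentials. Each connected component of $B_0$ determines, via the simple-weight case of Definition~\ref{def:ASfromq}, a triangulation $\TT$ of $\surf_\Delta$ and hence a heart $\calH_\TT \in \EGp(\pvd(\Gamma_{\TT_0}))$ via Theorem~\ref{thm:STBTiso}. Reading off the $\omega_0$-periods of the saddle connections dual to $\TT$ (oriented into $\chalfplane$) gives a central charge $Z$ on $K(\calH_\TT)$, and period coordinates provide a biholomorphism of this component of $B_0$ onto $\stab(\calH_\TT) \cong \chalfplane^{|\Sim(\calH_\TT)|}$. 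This defines $K$ on $B_0$ and shows it is a local biholomorphism compatible with~$\pi_1 \circ \pi_0$ and~$\pi_2$.

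Next, I would extend $K$ across the codimension-one walls $B_2 \setminus B_0$, parametrised by one-parameter families with exactly one horizontal saddle connection. By Proposition~\ref{prop:EGB2} the two adjacent chambers correspond to triangulations related by a flip, which under Theorem~\ref{thm:STBTiso} corresponds to a simple tilt of hearts; on the categorical side a wall $\cl W_S(\calH)$ separates the chambers $\stab(\calH)$ and $\stab(\mu_S^\sharp \calH)$ by Definition~\ref{def:wallsandchambers}. The two local definitions of~$K$ glue compatibly along these walls thanks to the tilting formula~\eqref{newsimples} for the new simple, together with the fact that the $\omega_0$-period of the saddle connection dual to the flipped arc is the appropriate $\bZ$-linear combination of the old periods.

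Extending $K$ to all of $\FQuad^\circ(\surf_\Delta)$ requires handling the higher-codimension strata, and this is the main technical point. Here I would invoke the walls-have-ends argument generalised in Appendix~\ref{sec:wallsends}: every component of a stratum $B_p$ with $p \geq 4$ has an end near which it does not locally disconnect $\FQuad^\circ(\surf_\Delta)$, so the map $K|_{B_2}$ defined on the complement of these strata extends uniquely to a holomorphic map on all of $\FQuad^\circ(\surf_\Delta)$ by Hartogs-type extension. That the extension lands in $\Stab^\circ(\calD)$ and is a local biholomorphism follows from both sides being complex manifolds of the same dimension~$n$ locally modelled on $\Hom(K(\calD),\bC)$ via the forgetful map (Proposition~\ref{prop_stab_mfd}). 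Global bijectivity is then a covering-space argument: the exchange graph correspondence of Theorem~\ref{thm:STBTiso} matches the chamber decompositions on both sides, and any fibre of $K$ is discrete by the local biholomorphism, so injectivity and surjectivity can be checked on the chamber structure.

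Finally, the factorisation through $\pi_0$ is built into the construction, since a Teichm\"uller framing canonically induces an isomorphism $\theta: \Gamma \to \widehat H_1(q)$ on every fibre of the component and therefore in particular a period framing. Equivariance under $\MCG(\surf_\Delta)$ on the one side and $\mathpzc{Aut}^\circ(\calD)$ on the other side follows from Proposition~\ref{prop:AutMCGclassical}, which identifies these groups compatibly with the exchange graphs. The descent statements for $K^\Gamma$ and $\overline{K}$ then follow by taking quotients on both sides of these equivariant isomorphisms, using that the action of $\mathpzc{Aut}^\circ_K(\calD)$ on $\Stab^\circ(\calD)$ matches the action of the kernel of $\MCG(\surf_\Delta) \to \MCG(\surf)$ on $\FQuad^\circ(\surf_\Delta)$ modulo change of $\Gamma$-framing. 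The decisive obstacle is the extension across the higher-codimension strata, for which the walls-have-ends argument of Appendix~\ref{sec:wallsends} is essential; the remaining steps are by now essentially bookkeeping on the chamber structure.
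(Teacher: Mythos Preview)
Your overall strategy is aligned with the paper's (which largely defers to \cite{BS15} and \cite{KQ2} and only expands on two subtle points), but there are two places where your sketch papers over genuine issues.

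First, the extension beyond $B_2$ is not a Hartogs-type extension. The strata $F_p$ for $p>2$ are locally contained in real-codimension-one loci $\{\Per(\alpha)\in\bR\}$, so Hartogs does not apply. The mechanism actually used in \cite{BS15} and in the paper's proof of the generalised Theorem~\ref{thm:KrhoBihol} is different: for $q\in F_p$ one has $e^{it}q\in B_{p-1}$ for small $t\neq 0$, so by induction and $\bC$-equivariance the one-sided limits $\sigma_\pm(q)=\lim_{t\to 0^\pm}K_{p-1}(e^{it}q)$ exist; the walls-have-ends statement is then used to show these two limits coincide, since their agreement is locally constant on the component of $F_p$ and automatically holds near an end where $B_{p-1}$ is connected. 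You have invoked the right ingredient but the wrong mechanism.

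Second, your claim that a Teichm\"uller framing ``canonically induces an isomorphism $\theta:\Gamma\to\widehat H_1(q)$ on every fibre'' is precisely the point the paper flags as non-obvious: the Teichm\"uller framing does \emph{not} frame the double cover, so it is a priori unclear that $\FQuad(\surf_\Delta)$ dominates $\Quad^\Gamma_g(1^r,\w^-)$. The paper constructs $\pi_0$ chamber by chamber, choosing lifts $\widehat{\eta_i}$ of the crossing saddle connections with periods in $\overline{\bH}$, and then has to verify that across a wall these lifted hat-homology classes transform by the same base-change relation~\eqref{newsimples} as the corresponding simples; this is the content of the explicit double-cover computation in Figure~\ref{doublecoverflip}. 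Without this check, neither the continuity of $K_2$ across $B_2\setminus B_0$ nor the global well-definedness of $\pi_0$ is secured.
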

\par
\begin{proof} The existence of~$K^\Gamma$ is the content of \cite[Theorem~11.2]{BS15}.
The map is constructed in Propositions~11.3 and 11.11 and the fact that the
isomorphism descends is argued along with diagram~(11.6) in loc.\ cit. The lift~$K$
is constructed in \cite[Theorem~4.13]{KQ2}. The quotient~$\ol{K}$ is obtained
from~$K$ thanks to Proposition~\ref{prop:AutMCGclassical}. We expand on two arguments
that are only briefly discussed in these sources. First, the orbifold structure
requires that $\mathpzc{Aut}^\circ(\cD)$ acts properly discontinuously. See
the proof of Theorem~\ref{thm:KrhoBihol} for this argument.
\par
\input{doublecoverflip}
Second, we elaborate on the existence of~$\pi_0$, implicitly needed in
\cite[Theorem~4.13]{KQ2}. As we will recall in more detail in the proof of
Theorem~\ref{thm:KrhoBihol},  the map~$K$ is constructed first as a map~$K_0$ on
the locus~$B_0$ of saddle-free differentials proceeding as we did with~$q_0$ above.
This involves the choice of a lift $\widehat{\eta_i}$ of the crossing saddle
connections ~$\eta_i$ on each of the chambers, i.e., connected components of~$B_0$.
This lift also provides the map~$\pi_0$ on each chamber. The map~$K_0$ is then
extended to a map~$K_2$ on the locus~$B_2$ of tame differentials, identifying
chambers adjacent by forward flips and and forward tilts respectively,
using the exchange graph isomorphism~\eqref{EGpvdEG} in Theorem~\ref{thm:STBTiso}.
The continuity of~$K_2$ and~$\pi_0$ on $B_2 \setminus B_0$ then follows
once we checked the following condition, using our standard choice of oriented
lifts of saddle connections so that their periods are~$\ol{\bH}$-valued:
The lift of the flipped standard saddle connection are related to the lifts
of the original standard saddle connection in the same way as the image
simples objects are related, namely by~\eqref{newsimples}\footnote{This is probably
well-known, see around \cite[Proposition~10.4]{BS15}, but we are
uncertain about the role of the orientation of lifts there.}. This is a local
topological statement, to be checked only for the case where $\ext^1$ is
non-trivial. Consider Figure~\ref{doublecoverflip} where shaded slits indicate
branch cuts and black arrows the positive real axis. In particular on the
central horizontal strips the imaginary axis points upwards and all hat-homology
classes are oriented to have positive imaginary part. We can now
verify $\widehat{\eta_{12}} = \widehat{\eta_1} + \widehat{\eta_2}$ in hat-homology,
using the obvious homotopy exhibiting this relation for each
of the two sheets of the canonical double cover.
\end{proof}
\par

%%%%%%%%%%%%%%%%%%%%%%%%%%%%%%%%%%
\subsection{The correspondence in the generalized setting}
\label{sec:gensetting}
%%%%%%%%%%%%%%%%%%%%%%%%%%%%%%%%%%

We modify the setup of Section~\ref{sec:origsetting} as follows. Let now~$\w$
be any tuple of non-zero integers and let $\colsur$ be
a wDMS, obtained as collapse of the surface~$\surf_\Delta$. (That is, a collision
$g(\colsur) = g(\surf_\Delta)$ is the easiest possibility to realize this situation
but we also allow the case where the collapse is not a collision.)
Applying Definition~\ref{def:ASfromq} to an initial saddle-free $(X_0,q_0,\psi_0)
\in \FQuad(\colsur)$ now gives a mixed-angulation~$\AS_0$ on~$\colsur$,
and thus a partial triangulation on~$\surf_\Delta$,
which we refine to an initial triangulation~$\TT_0$ on~$\surf_\Delta$.
We let $\ol{\calH}_0$ be the quotient heart in the quotient category $\D(\colsur)$
given by the construction in Theorem~\ref{thm:EGiso}. As above let~$\eta_i$
be the saddle connections crossing strips, and lift them to hat-homology
classes $\wh{\eta_i}$ using the convention in Section~\ref{app:DMSqdiff}
and orient the lifts (which now might be non-closed, i.e., relative periods)
so that $\Per(\wh{\eta_i}) \in \ol{\bH}$. Finally, as above we define
the map~$Z_0$ by $Z_0(S_i) = \Per(\widehat{\eta_i})$ and define the
stability condition $\sigma_0 = (\ol{\calH}_0,Z_0)$ on~$\D(\colsur)$.
\par
The choice of~$q_0$ and~$\TT_0$ above fixes a principal part~$\EGb(\colsur)$
of the exchange graph of partial triangulations and by the isomorphism in
Theorem~\ref{thm:EGiso} also a principal part in~$\EGb(\Dcol)$. We
let $\FQuad^\bullet(\colsur)$ be the connected components whose associated
partial triangulations belong to~$\EGb(\colsur)$. These components include
the one with~$(X_0,q_0,\psi)$, and possibly others.
On the stability side we consider the set
\be \label{eq:defStabbullet}
\Stas(\Dcol) \=\bC\cdot \bigcup_{\calH\in\EGb(\Dcol)} \cube(\calH).
\ee
which is not a priori a union of connected components of $\Stab(\Dcol)$.
\par
For the period and central charge map we fix isomorphisms  $\theta_0 : \Gamma
\to  \widehat H_1(q_0)$ and $\nu_0: \Gamma \to K(\calD)$, keeping in mind
that the rank of~$\Gamma$ depends on $(\w,\w^-)$. We define the
projections~$\pi_1$ and~$\pi_2$ just as in~\eqref{def:pi2}
and~\eqref{def:pi1}, with domains $\Stas(\Dcol)$ and $\Quad_g^\Gamma(\w,\w^-)$
respectively. Again it is not a priori clear whether the cover
$\FQuad^\bullet(\colsur)$ dominates the cover $\Quad^\Gamma_g(\w,\w^-)$.
\par
Moreover, recall the definition of the liftable mapping class groups
and autoequivalences from Section~\ref{sec:symmetrygroups}. We
write $$\Quad_g(\w^\Sigma,\w^-) = \FQuad^\bullet(\colsur) / \MCG^\bullet_\lift(\colsur).$$
This space is a finite cover of the moduli space of quadratic differentials
where the zeros can be permuted only if the realization of $\colsur$ as collapse
allows this, i.e.\ if the permutation can be lifted to a mapping class element
in~$\surfo$.
\par
\begin{theorem} \label{thm:KrhoBihol}
There is an isomorphism of complex manifolds
\be
K: \FQuad^\bullet(\colsur)   \to \Stab^\bullet(\Dcol)\,.
\ee
The natural covering map $\FQuad^\bullet(\colsur) \to \Quad_g(\w,\w^-)$ factors
through a covering $\pi_0: \FQuad^\bullet(\colsur) \to \Quad^\Gamma_g(\w,\w^-)$.
The map $K$ commutes with the maps $\pi_1 \circ \pi_0$ and $\pi_2$
to $\Hom(\Gamma,\bC)$ given by periods and by the central charge respectively.
This map~$K$ is equivariant with respect to the action of the mapping class group
$\MCG^\bullet_\lift(\colsur)$ on the domain and of the
group $\mathpzc{Aut}_\lift^\bullet(\Dcol)$ on the range. The map~$K$ descends to
isomorphisms of complex orbifolds
\ba
K^\Gamma:& \Quad^{\Gamma,\bullet}_g(\w,\w^-) \to \Stab^\bullet(\Dcol)/
\mathpzc{Aut}^\bullet_K(\Dcol) \\
\ol{K}:& \Quad_g(\w^\Sigma,\w^-) \to
\Stab^\bullet(\Dcol)/\mathpzc{Aut}_\lift^\bullet(\Dcol)\,,
\ea
where $\Quad^{\Gamma,\bullet}_g(\w,\w^-)$ are the connected components given by
the image of~$\pi_0$.
\end{theorem}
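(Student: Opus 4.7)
The plan is to follow the chamber-wise construction strategy of \cite{BS15} and \cite{KQ2}, using the two key technical inputs emphasized in Section~\ref{subsec_techniques}: the exchange graph isomorphism $\EGb(\colsur)\cong\EGb(\Dcol)$ from Theorem~\ref{thm:EGiso}, and the generalized walls-have-ends argument of Appendix~\ref{sec:wallsends}. First I would construct a preliminary map $K_0\colon B_0\cap\FQuad^\bullet(\colsur)\to\Stab^\bullet(\Dcol)$ on the locus of saddle-free differentials. On each connected component of $B_0$, the differential $q$ determines a $\w$-mixed-angulation $\AS_q$ by Definition~\ref{def:ASfromq}; via Theorem~\ref{thm:EGiso} this yields a quotient-type heart $\ol{\calH}_{\AS_q}\in\EGb(\Dcol)$ whose simples $\ol{S}_i$ correspond to the dual closed arcs $\eta_i$. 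The central charge $Z_q$ is then defined by $Z_q(\ol{S}_i)=\Per(\widehat{\eta_i})$ using the canonical lift to the hat-homology, oriented so that $\Per(\widehat{\eta_i})\in\ol{\bH}$, which by construction makes $(\ol{\calH}_{\AS_q},Z_q)$ a stability condition. Simultaneously, the choice of lifts $\widehat{\eta_i}$ defines the factorization $\pi_0\colon\FQuad^\bullet(\colsur)\to\Quad_g^\Gamma(\w,\w^-)$ on each chamber, and compatibility of $K_0$ with $\pi_2$ and $\pi_1\circ\pi_0$ is tautological.

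Next I would extend $K_0$ to a map $K_2$ on the tame locus $B_2$ of differentials with at most one horizontal saddle connection. By Proposition~\ref{prop:EGB2}, the chambers adjacent across $B_2\setminus B_0$ correspond to forward flips of mixed-angulations, and by Theorem~\ref{thm:EGiso} these are in bijection with simple forward tilts of the associated quotient hearts. The matching of central charges across such a wall reduces to the topological identity $\widehat{\eta}_{12}=\widehat{\eta}_1+\widehat{\eta}_2$ for the appropriately oriented hat-homology classes before and after a flip, exactly as verified in the proof of Theorem~\ref{thm:BS15_iso} (cf.\ Figure~\ref{doublecoverflip}); combined with the simple-tilt formula~\eqref{newsimples} and its effect on Grothendieck classes, this yields a continuous assignment of stability conditions. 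The resulting $K_2$ is a local homeomorphism, because in the period coordinates of $\FQuad^\bullet(\colsur)$ and the central-charge coordinates on $\Stab(\ol{\calH})$ (Proposition~\ref{prop_stab_mfd}) the map is given by the identity on $\Hom(\Gamma,\bC)$. Holomorphicity is immediate from the holomorphicity of both coordinate systems.

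To extend $K_2$ to all of $\FQuad^\bullet(\colsur)$ I would invoke the walls-have-ends result Corollary~\ref{cor:pathB2}: every connected component of the higher-order stratum $\FQuad^\bullet(\colsur)\setminus B_2$ has a boundary point where it does not locally disconnect its neighborhood, so $K_2$ extends continuously (hence holomorphically, by Riemann removable singularities in the period coordinates) to a map $K\colon\FQuad^\bullet(\colsur)\to\Stab^\bullet(\Dcol)$. Bijectivity of $K$ then follows: surjectivity onto $\Stab^\bullet(\Dcol)$ (as defined in~\eqref{eq:defStabbullet}) is clear since every heart in $\EGb(\Dcol)$ is hit by construction and the $\bC$-action is realized by the $\GL_2^+(\bR)$-action on differentials, while injectivity is local—two differentials with the same image would agree chamber-wise and hence everywhere—combined with the fact that $\EGb(\colsur)\cong\EGb(\Dcol)$ already identifies the chamber indexing sets.

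Finally, equivariance with respect to $\MCG^\bullet_\lift(\colsur)$ and $\mathpzc{Aut}^\bullet_\lift(\Dcol)$ follows from the group isomorphism of Proposition~\ref{prop:AutMCGgen}, since both actions are determined on the chamber-level combinatorial data that $K$ identifies. The quotient statements $K^\Gamma$ and $\ol{K}$ are then formal consequences, provided one checks that $\mathpzc{Aut}^\bullet_\lift(\Dcol)$ acts properly discontinuously on $\Stab^\bullet(\Dcol)$; this transfers via $K$ to the standard proper discontinuity of the mapping class group action on $\FQuad^\bullet(\colsur)$. I expect the main obstacle to be the global extension across the higher-codimension walls, i.e., the walls-have-ends step, since in the higher-order zero setting the local configurations of hat-homologous saddle connections defy the case-by-case analysis of \cite[\S11]{BS15}; our alternative argument in Appendix~\ref{sec:wallsends} is designed precisely to avoid this enumeration, at the cost of forgoing a moduli-theoretic description of the central-charge-zero stable objects.
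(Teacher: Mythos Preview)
Your overall strategy matches the paper's: chamber-wise definition of $K_0$ on $B_0$ via Theorem~\ref{thm:EGiso}, continuity across $B_2\setminus B_0$ by matching the hat-homology base change with the simple-tilt formula~\eqref{newsimples}, then extension across the higher strata, and finally equivariance and descent via Proposition~\ref{prop:AutMCGgen}. The construction on $B_2$, injectivity, surjectivity, and the quotient-orbifold arguments are essentially as in the paper.

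The gap is in your extension from $B_2$ to the full space. You claim that walls-have-ends plus ``Riemann removable singularities'' yields a continuous (hence holomorphic) extension of $K_2$, but neither tool does this job. The strata $F_p$ for $p>2$ are cut out locally by \emph{real} conditions such as $\Per(\alpha)\in\bR$ and so are real hypersurfaces; Riemann's theorem does not apply across such a set without first establishing a continuous extension. And Proposition~\ref{prop:wallhaveends} only says that each component of $F_p$ contains a point where $U\cap B_{p-1}$ is connected; it does not produce an extension at an arbitrary point of $F_p$, where the two sides of the wall are a priori disconnected. The ingredient the paper supplies and you omit is the $\bC$-action: for $q\in F_p$ and $0<|t|<\epsilon$ one has $e^{it}q\in B_{p-1}$, and the inductively established $\bC$-equivariance of $K_{p-1}$ forces the one-sided limits $\sigma_\pm(q)=\lim_{t\to 0^\pm}K_{p-1}(e^{it}q)$ to exist. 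Whether $\sigma_+(q)=\sigma_-(q)$ is then a locally constant condition along $F_p$, and \emph{this} is what walls-have-ends actually settles: at an end the complement is connected, so the two limits agree there and hence on the whole component. Without the $\bC$-equivariance step you have no candidate extension at generic wall points, and walls-have-ends has nothing to verify.
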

\par
This result has topological consequences for the principal parts:
\par
\begin{cor} \label{cor:stabbtame}
The principal part $\Stab^\bullet(\Dcol)$ is a union of connected
components of $\Stab(\Dcol)$. In particular these components in the image
of $FQuad(S_w)$ are generic-finite in the sense of Definition~\ref{def:tame}.
\end{cor}
\par
\begin{proof} [Proof of Theorem~\ref{thm:KrhoBihol}]
We proceed similarly as in the proof of Proposition 11.3 in \cite{BS15}
using the stratification~\eqref{eq:B_i_stratification} by the number of
horizontal saddle connections. Recall for this purpose the definition
of $B_p$ and $F_p = B_p \setminus B_{p-1}$ from Section~\ref{sec:wallsends}.
\par
\medskip
\paragraph{\textbf{The maps on the tame locus}}
We first define the map on the saddle-free locus
\[K_{0}: B_0 \rightarrow \Stab^\bullet(\Dcol)\]
associating with a framed differentials a stability condition just as we did
with~$q_0$ in the introductory paragraph of this section.
\par
We check that this map continuously extends to a map~$K_2$ the tame locus. By
Corollary~\ref{cor:EGB2} any two neighboring chambers~$C,C'$ in $B_0$ are
related
by a forward flip of the partial triangulations~$\AS \to \AS'$ at some arc~$\gamma$.
Consider a differential~$q$ on the wall between~$C$ and~$C'$.  We need to show
the continuity of the extension of~$K_0$ to~$0$ along the arc
\be
\rho: t \mapsto e^{it} q \in \FQuad^\bullet(\colsur)  \quad \text{for}
\quad t \in (-\epsilon,\epsilon) \setminus \{0\}\,.
\ee
By the isomorphism of exchange graphs from Theorem~\ref{thm:EGiso} there
is a tilt at some simple~$S$ that relates $K_0(C)$ to a neighboring chamber.
We moreover want to show that this chamber is indeed $K_0(C')$. This follows
since the assignment of stability conditions to partial triangulations is defined
using a refinement to a triangulation and the exchange graph isomorphism is derived
from the isomorphism~\eqref{EGpvdEG} at the level of triangulations, i.e., there
are refinements so that the forward tilt lifts to a tilt  $\TT \xrightarrow{\gamma}
 \TT'$. Next we check the compatibility of the lifted hat-homology classes
$\widehat{\eta_i}$ at this wall-crossing, i.e.\ that the lifted classes of saddle
connections crossing cylinders satisfy the same base change relation as the
corresponding simples, which is~\eqref{newsimples}. We checked this in the proof
of Theorem~\ref{thm:BS15_iso} along with
Figure~\ref{doublecoverflip} from the refining triangulation and this continues
to hold after setting to zero the classes of dual complementary arcs and the
corresponding simples. Now we are in position to use the periods of the
dual arcs in~$\TT$ and the corresponding simples as a coordinate system on a full
neighborhood of the wall, see Definition~\ref{def:wallsandchambers} and
\cite[Lemma~7.9]{BS15}. We conclude, since the periods of all dual arcs
(and hence all simples) vary continuously along the arc~$\rho$.
\par
\medskip
\paragraph{\textbf{Extension to non-tame differentials}}
We now construct $K_{p}$ defined on~$B_p$ inductively, assuming the
existence of $K_{p-1}$ to eventually obtain $K  = K_{k}$, where~$k$
was defined as the maximal number of horizontal saddle connections.
Just as in \cite[Proposition~5.5]{BS15} for any differential~$q \in F_p$
any small rotation $e^{it} q \in B_{p-1}$ for $0<|t|<\ep$ and~$\ep$ small
enough. By induction on~$p$ and $\bC$-equivariance the limiting
generalized stability conditions
\be
\sigma_{\pm}(q) \= \lim_{t \to 0^+} K_{\rho,p-1}(e^{\pm i t} q)
\ee
exist and we need to show that they coincide. The continuity of the
$\bC$-action on framed differentials and $\Stab^\bullet(\Dcol)$ implies that the
agreement of the limits has a locally constant answer. Since walls have ends on
any connected component of~$F_p$ (Proposition~\ref{prop:wallhaveends}),
this agreement locus is all of~$F_p$. We thus obtain eventually a map~$K$
defined everywhere.
\par
\medskip
\paragraph{\textbf{Injectivity}}
Suppose that the differentials~$q_1$ and~$q_2$ have the same image
$K(q_1) = K(q_2)$. Using the $\bC$-action we may assume that both differentials
lie in the interior of chambers. Using the injectivity of the exchange graph
isomorphism in Theorem~\ref{thm:EGiso} shows that the chambers agree and
since periods of crossing saddle connections are coordinates we conclude~$q_1=q_2$.
\par
\par
\medskip
\paragraph{\textbf{The surjectivity}} of~$K$ is obvious from the surjectvity
of the exchange graph isomorphism in Theorem~\ref{thm:EGiso} and the compatibility
with the $\bC$-action.
\par
\medskip
\paragraph{\textbf{The maps~$\pi_0$}} has been defined locally on each
chamber of~$B_0$ by using the lift to hat-homology with periods in~$\bH$.
On each wall crossing this assignment has been verified along with the continuity
of~$K_2$ to be compatible
with the base change in the Grothendieck group $K(\Dcol)$. By definition of
spaces of stability conditions simple objects are labeled globally and
in particular a basis of $K(\Dcol)$ can be chosen globally over
$\Stab^\bullet(\Dcol)$ (and in fact over $\Stab^\bullet(\Dcol)
/\mathpzc{Aut}^\bullet_K(\Dcol)$). This implies that the base change
formula~\eqref{newsimples} is consistent over loops in $\EGb(\Dcol)$, i.e.,
the product of the wall-crossing base changes along a closed loop is the
identity. Since
hat-homology is isomorphic to the Grothendieck group (say in the initial chamber)
this implies that the corresponding base change in hat-homology is consistent
over loops in $\EGb(\colsur)$. This shows that the local definition of~$\pi_0$
gives well-defined function on~$B_2$. We extend~$\pi_0$ to a global function
over higher~$B_k$ using the $\bC$-action just as we did with~$K_2$.
\par
\medskip
\paragraph{\textbf{The compatibility of~$K$ with the projections}}
This compatibility with~$\pi_2$ and $\pi_1 \circ \pi_0$ holds on the
initial chamber by definition,
on all the other chambers by construction of~$\pi_0$ and globally, since
all these maps are equivariant with respect to the $\bC$-action.
\par
\medskip
\paragraph{\textbf{Quotient orbifolds}}
In order to show that $\Stab^\bullet(\Dcol)/\mathpzc{Aut}^\bullet(\Dcol)$
is an orbifold we need to show the properness of the action
of $\mathpzc{Aut}^\bullet(\Dcol)$ and that this group acts with finite
stabilizers. For properness we use that the $\bC$-action, which commutes with
automorphisms, to assume that the two points whose orbits we have to separate
lie in the interior of a chamber. Since $\mathpzc{Aut}^\bullet(\Dcol)$ maps (open)
chambers to chambers, properness is obvious if the two orbits are never in a common
chamber. Otherwise we use that $\mathpzc{Aut}^\bullet(\Dcol)$ preserves the
integral lattice~$\Gamma^\vee$ in $\Hom_{\Z}(\Gamma, \bC)$
and the fact that there is no infinite sequence
in $\GL_d(\bZ)$, where $d = \rk(\Gamma)$, that fails to displace a small
ball. This argument shows moreover the finiteness of stabilizers.
(Compare with \cite[Lemma~3.3]{Smith18}.)
\par
\medskip
\paragraph{\textbf{Descending to $K^\Gamma$ and to $\ol{K}$}} Recall from
Proposition~\ref{prop:AutMCGgen} the existence of an isomorphism
$\aut^\bullet_\lift(\Dcol) \to \MCG^\bullet_\lift(\colsur)$. The equivalence
of~$K$ with respect to this isomorphism follows from the construction
in Proposition~\ref{prop:AutMCGclassical} (using the same initial triangulation),
since flipping arcs commutes with the mapping class group action.
\end{proof}
\par
The following proof adapts the argument of Bridgeland-Smith in a way so
that the complete description of the moduli space of stable objects of
given class (see \cite[Theorem~11.6]{BS15}) can be avoided. We expect the
analogue of this theorem to have more and more complicated case distinctions as
the entries of~$\w$ grow.
\par
\begin{proof}[Proof of Corollary~\ref{cor:stabbtame}]
The image of the holomorphic map~$K$ is obviously open, so we need to check that
it is closed.
Suppose that we have a one-parameter family
$\sigma(t)$ in $\Stab^\bullet(\Dcol)$ with $\sigma(t) = K(q(t))$ in the image
of the comparison isomorphism for $t \neq 0$. To show that $\sigma = \sigma(0)$
is also in the image we need by \cite[Proposition~6.8]{BS15} a lower bound
for the lengths of saddle connections of $q(t)$ as $t \to 0$. We claim that
for each $t \neq 0$ and for each saddle connection $\gamma$ on $q(t)$  there
is stable object $E \in \sigma(t)$ with mass $m(E) = |Z(\gamma)|$. We then obtain
the lower bound of lengths as $t \to 0$, since the masses of stable objects
in the limit $\sigma(0)$ are bounded thanks to the support property of the
stability condition.
\par
To prove the claim we may assume by rotation that $Z(\gamma) \in \bR$. After
a small rotation now $\gamma$ becomes a standard saddle connection crossing
a horizontal strip. (The nearby directions where this is not true have
a saddle connection or a spiral domain, hence a saddle connection, and this
exception set is countable.) The  stability condition corresponding to the
slightly rotated differential has a simple (and hence stable) object of
class~$\alpha$. This property persists after undoing the small rotation.
\end{proof}

\printbibliography

\end{document}